\documentclass{myart2}
\begin{document}
	
	\title{Maximal attractors for perturbations of unimodal maps near a homoclinic tangency}
	\author{Przemys{\l}aw Kucharski\footnote{
			This work was supported by the National Science Centre, Poland (NCN), ~grant no. 2019/3
            4/E/ST1/00237. \\
			ORCiD: 0000-0002-3826-5827,\\
			\emph{email:} przemyslaw.kucharski@doctoral.uj.edu.pl.\\
			\emph{Keywords:} strange attractors, chaotic attractors, H\'enon-like maps, Lozi-like maps, border-collision normal forms\\
			\emph{2010 MSC: Primary 37D45, 37E20}
	}}
	\affil{Jagiellonian University, Faculty of Mathematics and Computer Science, 
		Prof. St. {\L}ojasiewicza St 6, PL30348, Cracow, Poland}
	\maketitle
	\begin{abstract}
		We show that small planar perturbations of families of unimodal maps have maximal (equal to the intersection of the iterates of their trapping region) attractors at parameters near homoclinic tangency of one of the fixed points. The results hold for general unimodal maps exhibiting certain hyperbolic properties. Hence, admissible families considered include H\'enon-like families, Lozi-like families, or perturbations of border collision normal forms. Therefore, our results generalize results of Viana \cite{viana:global-att} in the case of H\'enon-like families and Glendinning and Simpson \cite{simpson:robust-chaos} in the case of border collision normal forms. 
        
	\end{abstract}

	\section*{Outline of the article}
		\subsection*{Brief summary}
			In this work, we consider planar perturbations of families of unimodal maps, called admissible families, parameterized in such a way as to model the unfolding of a homoclinic tangency. The involved unimodal maps are assumed to exhibit certain hyperbolic properties, and therefore admissible families considered include H\'enon-like families, Lozi-like families, or perturbations of border collision normal forms.
			
			The key component of the proofs is the renormalization model introduced by Dyi-Shing Ou in \cite{dyi-shing-ou:critical-points-I} for the orientation preserving Lozi family. In this work, we generalize this model to admissible families. 
			
			We show that sufficiently small perturbations of admissible families have maximal (equal to the intersection of the iterations of their trapping region) attractors at parameters near homoclinic tangency of one of the fixed points; see Theorem \ref{mainthm:maximal-attractors}. Our results are novel and generalize results of Viana \cite{viana:global-att} in the case of H\'enon-like families, Cao and Liu \cite{cao-liu-98}, and the author \cite{kucharski:str-att}, in the case of the Lozi family, and Glendinning and Simpson \cite{simpson:robust-chaos} in the case of border collision normal forms. 
			
			In the case of piecewise hyperbolic admissible families, we show that the closure of the unstable manifold of one of the fixed points is an invariant set, on which the system is transitive and has Devaney chaos, see Theorem \ref{mainthm:chaotic-attractors}. This generalizes multiple works, in particular the results on Lozi-like maps \cite{Lozi-likemaps}, border-collision normal forms \cite{simpson:robust-chaos}.
			
		\subsection*{Structure and results}
		We outline the structure of the work. In generality presented in this work, all results are novel.
		
		Section \ref{section:admissible-families} introduces admissible families. The definition is based on the Wang-Young families \cite{StrangeAttractorswithOneDirectionofInstability}, but is more general, and encompasses not only non-uniformly hyperbolic families, but also piecewise hyperbolic families such as those in \cite{Young1985BowenRuelleMF}, and in particular Lozi-like families or the family of border collision normal forms. A very concise sketch of the proof that admissible families exhibit certain forms of hyperbolicity, see Lemma \ref{lemma:cone-inv}, can be found in \cite{StrangeAttractorswithOneDirectionofInstability}. However, the details of the proof are not present in \cite{StrangeAttractorswithOneDirectionofInstability}, perhaps because the reader is assumed to already be familiar with these techniques. 
		
		Section \ref{section:piecewise-uni-hyp-families} contains the proof that, under the assumption of piecewise uniform hyperbolicity and high expansion, any map of the admissible family is mixing on the closure of the unstable manifold $\cl\UnstableMan{z_{+}}$ of the fixed point $z_{+}$; see Theorem \ref{mainthm:chaotic-attractors}. The proof follows an approach first used in \cite{strange-attractor-mis}. The results of Theorem \ref{mainthm:chaotic-attractors} are novel in this setting and generalize the results of \cite{simpson:robust-chaos,Lozi-likemaps, strange-attractor-mis}. The fact that the family of border collision normal forms is an admissible family is presented in Section \ref{subsection:c1-perturbations}.
		
		The renormalization model is developed in Section \ref{section:renormalization}. The model is based on the results of Ou \cite{dyi-shing-ou:critical-points-I} and constitutes their substantial and non-trivial generalization with quantitative differences in the behavior of the induced map of the model; see Proposition \ref{proposition:multi-renorm} and the discussion in Section \ref{section:maximal-attractors}. The model is crucial to the proof of maximality of attractors, see Theorem \ref{mainthm:maximal-attractors}, and the fact that there are trapping regions containing invariant set outside the chaotic attractor of Theorem \ref{mainthm:chaotic-attractors}.
		
		Section \ref{section:maximal-attractors} contains the proof of Theorem \ref{mainthm:maximal-attractors} and preliminary lemmas. Theorem \ref{mainthm:maximal-attractors} generalizes the results of Viana \cite{viana:global-att} in the case of orientation preserving H\'enon-like maps, the author's \cite{kucharski:str-att}, Cao and Liu \cite{cao-liu-98} for the Lozi family, and Glendinning and Simpson for the orientation preserving family of border collision normal forms \cite{simpson:robust-chaos}. The proof of Theorem \ref{mainthm:maximal-attractors} is based on a novel application of the renormalization model compared to Ou results \cite{dyi-shing-ou:critical-points-I}. 
		
		In Section \ref{subsection:hasudorff-cont} we show that the attractors vary continuously in the Hausdorff metric, as long as they are the maximal invariant subset of some trapping region varying continuously, see Theorem \ref{mainthm:maximal-attractors} and Proposition \ref{prop:continuity-hausdorff}. The results and methods of this section are novel.
    \tableofcontents
		
	\section{Introduction}
	
	\subsection*{Asymptotic behaviour and hyperbolicity}One of the main themes in the theory of dynamical systems is the study of asymptotic behavior of orbits. That is, one considers a map $f\colon X\to X$, where $X$ is a topological space (often with additional structures such as a measure or a structure of a manifold), and $f$ is a continuous or even smooth endomorphism of $X$. Then various properties of the orbits $\{z,f(z),f^{2}(z),\ldots\}$, for $z\in X$, are studied. Depending on the existing structures in the system, these properties can be of metric, topological, or geometric origin. In short, dynamical system theorists are often concerned with the description of the limiting behavior of points in terms of existing structures.
	
	In the particularly relevant case of smooth dynamical systems, we can distinguish a class of systems with so called hyperbolic properties. Hyperbolicity along an orbit of a point $z\in X$, for some manifold $X$, in loose terms, is exponential divergence or convergence of nearby points to $z$. Classical theory ensures in this case the existence of stable and unstable manifolds, which, locally to $z$, are smooth manifolds with tangent spaces spanning the tangent space $T_{z}X$, see Section \ref{section:definitions} for precise definitions. In a way, relative positions of those manifolds, their geometrical properties, the way they intersect each other, govern some portion of the dynamics of the system. 
	
	\subsection*{Unfolding of a homoclinic tangency}One of the scenarios often studied is that of the unfolding of a homoclinic tangency. Specifically, a parameterized family $\{f_{t}\colon X\to X\}_{t\in T}$ of smooth maps is considered, in which at certain parameter $t_{0}\in T$ there exists a fixed point $z_{0}$ in the system $(f_{t_{0}},X)$ and its stable manifold has a tangential intersection with its unstable manifold. This situation is of particular relevance, since one can prove that near the parameter $t_{0}$ there is a very rich dynamical behavior \cite{palistakens:hyp}, such as, for example, the existence of horseshoes or attractors with chaotic properties \cite{viana:global-att,viana-marcelo-strange-att-1993}. Moreover, unfolding of a homoclinic tangency can be modeled by embedding a one dimensional unimodal map into the plane and perturbing it to obtain a diffeomorphism, or diffeomorphism outside a singularity set. Therefore, perturbations of unimodal maps with some hyperbolic properties (which are then inherited by the perturbed system), such as in \cite{viana:global-att,viana-marcelo-strange-att-1993,StrangeAttractorswithOneDirectionofInstability}, or some specific perturbed family that models a generic unfolding of a tangency, such as H\'enon \cite{henon1976}, or border-collision normal forms \cite{simpson:robust-chaos,simpson-gosh:robust-chaos}. The strategies are similar both in the case of smooth maps, which have non-uniformly hyperbolic behavior, and in the case of piecewise smooth maps, which have hyperbolic behavior along orbits at which the map is differentiable. We will often differentiate between the smooth, non-uniformly hyperbolic case (such as H\'enon-like maps, or Wang-Young families \cite{henon-dynamics-of,StrangeAttractorswithOneDirectionofInstability, viana-marcelo-strange-att-1993}), and the piecewise smooth, piecewise hyperbolic case, such as border collision normal forms, Lozi-like maps and their small perturbations \cite{simpson:robust-chaos,Lozi-likemaps,strange-attractor-mis}. 
	
	
	\subsection*{Attractors and chaotic properties}The existence of strange, that is, with chaotic properties (see Section \ref{section:definitions} for definitions), attractors for the unfolding of a generic quadratic homoclinic tangency has been established incrementally in a series of works \cite{henon-dynamics-of, mora1993,ures_1995,viana-marcelo-strange-att-1993, viana:global-att}. On the other hand, in the case of piecewise affine border collision normal forms, abbreviated BCNFs, (which can be considered a model for piecewise affine and piecewise hyperbolic maps, since the BCNFs family includes all piecewise affine planar maps with two pieces of affinity), the existence of strange or chaotic attractors has been established first by Misiurewicz \cite{strange-attractor-mis} in the case of a subfamily of orientation reversing Lozi maps, and then by other authors, in particular by Simpson and Glendinning for orientation preserving BCNFs. All mentioned results were restricted to some parameter regions where the system has hyperbolic properties. In the case of, arguably simpler, piecewise hyperbolic BCNFs, the region for the existence of chaotic attractors can be computed explicitly, while in the case of unfolding of a quadratic generic tangency, it is necessary to assume that the system is close to one dimension, that is, the system is a small perturbation of a one dimensional system. This allows us to deduce some properties of the perturbed system which are rooted in the analogous properties of the one dimensional map. Our results also use the perturbative argument. Hence, if our results are applied to the BCNFs, we do not obtain a full generalization of the Glendinning and Simpson results. Specifically, we obtain a smaller region where the chaotic attractor exists, that is, the attractor with sensitive dependence on initial condition. On the other hand, we obtain regions where a strange attractor exists, a result not present in Glendinning and Simpson's work. A strange attractor is an attractor with an orbit with positive Lyapunov exponent that is maximal, that is, the attractor is equal to the intersection of the iterates of its trapping region. The latter is not required in the definition of a chaotic attractor. Hence, a strange attractor is in a way stronger a notion than a chaotic attractor. 
	
	\subsection*{Maximality of attractors}Apart from the existence of attractors in admissible families, we investigate whether the strange or chaotic attractors constitute, together with the fixed points, the non-wandering set. A non-wandering set is the set where all non trivial dynamics occurs in the system. Hence, the structure of non-wandering set is crucial in understanding the system. 
	Let $f\colon\plane\to\plane$ be a member of an admissible family. By the nature of admissible families, strange or chaotic attractors always contain the closure of the unstable manifold of a hyperbolic fixed point $z_{+}$ of $f$, denoted $\cl\UnstableMan{z_{+}}$, and in all known examples to the author, are, in fact, equal to $\cl\UnstableMan{z_{+}}$. In the case of orientation reversing admissible families, it is straightforward to prove that the non-wandering set is equal to $\cl\UnstableMan{z_{+}}\cup\{z_{-}\}$, where $z_{-}$ and $z_{+}$ are hyperbolic fixed points. On the other hand, in the orientation preserving case this problem is solved only partially, and only for some families. It is known that in the case of H\'enon-like maps \cite{viana:global-att,cao-mao:non-wandering-set-of-some-Henon-maps}, there are open parameter regions accumulating on the parameter of homoclinic tangency, in which the strange attractor $\cl\UnstableMan{z_{+}}$ is maximal (that is equal to $\bigcap f^{n}(U)$ for some trapping region $U\supset \cl\UnstableMan{z_{+}}$), equivalently in this case, global. Similarly, as a piecewise hyperbolic family, in the case of Lozi maps \cite{kucharski:str-att,cao-liu-98}, it is known that similar phenomena occur. Nevertheless, it is not known whether there are parameter regions such that $\cl\UnstableMan{z_{+}}$ is a chaotic or strange attractor, and there are non-wandering points outside $\cl\UnstableMan{z_{+}}$. An affirmative answer to this problem in the case of H\'enon-like maps would be significant, since it would imply a possible direction in the study of coexistence phenomena in  non-uniformly hyperbolic families, which is a part of the famous Palis program.

	\begin{thmmain}\label{mainthm:chaotic-attractors}
		Le $\cc{F}:=\{f_{a,b}\}_{(a,b)\in A\times B}$ be a piecewise hyperbolic admissible family, satisfying \ref{stepA2-expansion} and \ref{stepA2-hyp}, and $a_{*}$ be the parameter such that $f_{a_{*}}(c)=q_{+}$, and $f_{a}(c)>q_{+}$ for $a\in (a_{*},a_{2})$. Then there exists $\tilde b\in B$ such that for $b\in (0,\tilde b)$, there are ascending intervals $\tilde A^{b}\subset A$ with $\bigcup_{b\in (0,\tilde b)}\tilde{A}^{b} =(\tilde a,a_{*})$, for some $\tilde a\in A$, and such that $\cl\UnstableMan{z_{+}}$ is a chaotic attractor in $\tilde{A}\times\{b\}$. 
	\end{thmmain}
	\begin{thmmain}\label{mainthm:maximal-attractors}
		Let $\cc{F}:=\{f_{a,b}\}_{(a,b)\in A\times B}$ be an admissible family and $a_{*}$ be the parameter such that $f_{a_{*}}(c)=q_{+}$, and $f_{a}(c)>q_{+}$ for $a\in (a_{*},a_{2})$. Then there are open sets $K_{n}\subset A$, with $K_{n}\to \{a_{*}\}$, as $n\to\infty$, in the Hausdorff metric, and parameters $k_{n}\in B$, with $k_{n}\to 0$, as $n\to\infty$, such that the $\cl\UnstableMan{z_{+}}$ is a maximal invariant set in some neighbourhood for parameters in $K_{n}\times (0,k_{n})$. In particular, at those parameters, under the additional assumption of piecewise uniform hyperbolicity \ref{stepA2-hyp} and high expansion \ref{stepA2-expansion}, $\cl\UnstableMan{z_{+}}$ is a topological strange attractor and varies continuously in the Hausdorff metric. 
	\end{thmmain}

	\section{Basic definitions}\label{section:definitions}
	\subsection{Notation}We adhere to the following notation \begin{enumerate}
		\item For $\delta>0$, define $B_{\delta}(z)\subset X$ to be the ball of radius $\delta$ and center $z\in X$.
		\item For a subset $U\subset X$, of a topological space $X$, let $\cl U$, $\inter U$, $\fr U$ be, respectively, the closure, the interior, and the boundary of $U$.
		\item For a diffeomorphism $f\colon M\to M$ on a manifold $M$, denote by $df\colon TM \to TM$ its derivative, where $TM=\bigcup_{z\in M} T_{z}M$ is the tangent space. By $df_{z}$, we denote the derivative at $z\in M$.
		\item For a piecewise smooth curve $\gamma$, let $\length(\gamma)$ be its length.
		\item For a subset $U\subset X$, let $\diam U$ be the diameter of $U$.
		\item For $z\in X$ and $U\subset X$, let $d(z, U)$ be the distance from $z$ to $U$.
	\end{enumerate}
	
	\subsection{Attractors}
	Let $h\colon X\to X$ be a continuous transformation of a compact metric space. The \emph{$\omega$-limit set} $\omega(x)$ of a point $x$ is defined as \[\omega_{h}(x)=\bigcap_{n\in\N}\cl\{h^{k}(x)\colon k\geq n\}.\] An open subset $U\subset X$ will be called a \emph{trapping region} if $h(\cl U)\subset U$. A compact set $F\subset X$ contained in a trapping region $U$ and satisfying $\bigcap_{n\in\N}h^{n}(U)=F$ will be called \emph{maximal} in $U$. For an $h$-invariant $F$, we define \emph{the stable set} $W^{s}(F)$ of $F$ as all those points $x\in X$ such that $\omega(x)\subset F$. A map $h$ is said to be \emph{transitive} if for any open sets $W,V\subset X$ we can find $i\in\N$ such that $h^{i}(W)\cap V\neq\emptyset$. For compact metric spaces without isolated points, transitivity is equivalent to the existence of a point with dense orbit. If $h^{i}(W)\cap V\neq\emptyset$ holds for almost every $i\in\N$, then $h$ is said to be \emph{mixing}. For the case of $h\colon \plane\to\plane$ differentiable along the orbit of some $x\in \plane$, let us also recall that for every $v\in\plane$ we can define \emph{the lower Lyapunov exponent} at $(x,v)\in T_{x}\plane$ by $\chi_{-}(x,v):=\liminf_{n\to\infty}\frac{1}{n}\log||df_{x}^{n}v||$, where $df_{y}$ is the derivative of $f$ at $y\in \plane$.
	\begin{definition}\label{def:attractor}
		A subset $A\subset X$ is said to be an \emph{attractor} if it is a compact $h$-invariant set with a dense orbit and its stable set has non-empty interior. If additionally $A$ is maximal in some trapping region, it is called a \emph{topological attractor}.
	\end{definition}
	Recall that $h\colon X\to X$ has \emph{the sensitive dependence on initial conditions} if there is $\delta>0$ such that for every $x\in X$ and neighborhood $N$ of $x$ there exist a $y\in N$ and an $n\geq 0$ with $d(h^{n}(x),h^{n}(y))>\delta$.
	\begin{definition}\label{def:chaotic-att}
		If $A$ is an attractor, $h|_{A}$ is transitive, the periodic points are dense in $A$ and $h|_{A}$ has sensitive dependence on initial conditions, then $A$ will be called a \emph{chaotic attractor} and is said to have \emph{Devaney chaos}.
	\end{definition}
	Note that the above definition excludes periodic attractors. Although they are transitive and have a dense set of periodic orbits, they do not verify the definition of sensitive dependence on initial conditions. Moreover, a finite set can satisfy Definition \ref{def:attractor} only if it is comprised of a periodic source.

	\begin{definition}\label{def:strange-attractor}
		A subset $A\subset X$ is said to be a \emph{topological strange attractor} if it is an attractor, has a point with a positive lower Lyapunov exponent along a dense orbit, and is a maximal set of some trapping region.
	\end{definition}
	In the literature, we can find many non-equivalent definitions of a strange attractor. The choice of a definition is dictated by intuitions regarding the matter of strangeness, that is, in which exactly area the strangeness is exhibited; geometric, topological, dynamical, or maybe metric. Our definitions are formulated so that the concept of strangeness is somewhat compatible with the relevant works to our results, such as \cite{Lozi-likemaps,strange-attractor-mis}.
	\subsection{Hyperbolic behavior}
	\subsubsection*{Families of cones}
	\begin{definition}
		For a number $\rho\in(0,1)$, called the coefficient of cone, we define a \emph{stable $\rho$-cone} $C^{s}_{\rho}\subset\plane$ by \begin{equation}\label{eq:cone-dfn-stable}
			C^{s}_{\rho}:=\{(v_{1},v_{2})\in\plane\colon|v_{1}|\leq \rho |v_{2}|\},
		\end{equation}and an \emph{unstable $\rho$-cone} $C^{u}_{\rho}\subset\plane$ by \begin{equation}\label{eq:cone-dfn-unstable}
			C^{u}_{\rho}:=\{(v_{1},v_{2})\in\plane\colon|v_{2}|\leq \rho |v_{1}|\}.
		\end{equation}The interiors $\inter C^{s}_{\rho}$ and $\inter C^{u}_{\rho}$ are defined as\begin{align*}
		   \inter C^{s}_{\rho}&:=\{(v_{1},v_{2})\in\plane\colon|v_{1}|< \rho |v_{2}|\}\cup\{(0,0)\}\text{ and }\\
           \inter C^{u}_{\rho}&:=\{(v_{1},v_{2})\in\plane\colon|v_{2}|< \rho |v_{1}|\}\cup\{(0,0)\}. 
		\end{align*} 
	\end{definition}
	\begin{definition}[A family of cones]
		Let $\rho>0$. The families $\{C^{\kappa}_{z,\rho}\subset T_{z}\plane\}_{z\in\plane}$, for $\kappa\in\{s,u\}$, will be called,  respectively, \emph{stable} and \emph{unstable families of $\rho$-cones}, or \emph{vertical} and \emph{horizontal} families of $\rho$-cones.
    \end{definition}
	For our results, we will only need families of cones with constant coefficients, and so we restrict the definitions to such a case. The following definitions depend on the choice of the families of cones. The choice will be made later, as we introduce the primary object of this work, called admissible families of maps. For now, we will drop the dependence on $\rho>0$.
	
	\subsubsection*{Stable and unstable curves}
	For a piecewise smooth curve $\gamma\colon I:=[0,1]\to \plane$ let $\gamma'=(\gamma_{x}',\gamma_{y}')$ be the section of vectors tangent to $\gamma$ restricted to its domain of differentiability. We define the length of $\gamma$ by $\length(\gamma):=\int ||\gamma'||$.
	\begin{definition}
The $C^{1}$-smooth curves $\gamma_{s}$, $\gamma_{u}$ with tangent vectors contained in, respectively, stable and unstable $\rho$-cone families, that is, $\gamma'_{s}(t)\in C^{s}_{\gamma_{s}(t),\rho}$ and $\gamma'_{u}(t)\in C^{u}_{\gamma_{u}(t),\rho}$, for $t\in I$, will be called a \emph{$s$-curve} and a \emph{$u$-curve}. 
	\end{definition}
	
	\begin{definition}\label{definition:rectangle}
		We will say that a closed disc $D\subset \plane$ is a \emph{rectangle} if its boundary $\fr D$ consists of four smooth curves, two of which are disjoint $s$-curves, and two are disjoint $u$-curves. The $u$-curves will be called horizontal faces and the $s$-curves vertical faces.
	\end{definition}
	The following lemma will be useful later. It can be found in \cite{Lozi-likemaps}.
	\begin{lemma}\label{lemma:cone->gamma-a-function}
		Any $u$-curve can be expressed as $\gamma_{u}=\{(x,\phi_{u}(x))\colon x\in I_{u}\}$ for some $C^{1}$ function $\phi_{u}\colon I_{u}\to \R$, where $I_u=\R$ or $I_u=[0,1]$. Similarly, any $s$-curve can be expressed as $\gamma_{s}=\{(\phi_{s}(y),y)\colon y\in I_{s}\}$ for some $C^{1}$ function $\phi_{s}\colon I_{s}\to \R$, where $I_s=\R$ or $I_s=[0,1]$.
	\end{lemma}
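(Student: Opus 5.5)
The plan is to argue directly from the cone condition: an $s$-curve is the graph of a function of $y$, and a $u$-curve is the graph of a function of $x$. Consider a $u$-curve $\gamma_u\colon I\to\plane$, $\gamma_u(t)=(\gamma_x(t),\gamma_y(t))$, which is $C^1$ with $\gamma_u'(t)\in C^u_{\rho}$ for every $t$. Regularity is implicit: a tangent vector equal to zero is degenerate, so I take $\gamma_u'(t)\neq 0$. The cone condition $|\gamma_y'(t)|\le\rho|\gamma_x'(t)|$ then forces $\gamma_x'(t)\neq 0$, since otherwise both components would vanish. Because $\gamma_x'$ is continuous on the connected domain $I$ and never vanishes, it has constant sign. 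Hence $\gamma_x$ is a strictly monotone $C^1$ function with nonvanishing derivative, and so a $C^1$ diffeomorphism onto its image $J=\gamma_x(I)$, which is an interval.

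Next I define $\phi_u:=\gamma_y\circ\gamma_x^{-1}\colon J\to\R$. This is $C^1$ by the inverse function theorem, and $\gamma_u(I)=\{(x,\phi_u(x))\colon x\in J\}$. I also note that $|\phi_u'(x)|=|\gamma_y'|/|\gamma_x'|\le\rho$, which is the Lipschitz control used later.

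The last point is normalization. When $J$ is a compact interval $[x_0,x_1]$, composing with the affine map $[0,1]\to J$ lets me regard the domain as $[0,1]$, in the sense of reparametrizing. More precisely, the statement should be read as saying $I_u$ is a compact interval or all of $\R$, up to affine change of variable. For curves defined on an unbounded parameter domain, such as unstable manifolds parametrized by $\R$, the same argument gives an interval $J\subset\R$. In the intended setting the curve is proper or of infinite length, and then $J=\R$, because $|\gamma_x'|\ge\|\gamma'\|/\sqrt{1+\rho^2}$.

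The $s$-curve case is symmetric, with the roles of the coordinates swapped: $|\gamma_x'|\le\rho|\gamma_y'|$ forces $\gamma_y'\neq0$, and $\phi_s=\gamma_x\circ\gamma_y^{-1}$.

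The only real obstacle is the interpretive step about the domain $I_u$, namely the choice between $[0,1]$ and $\R$. The statement is literally true only after an affine rescaling, or under a properness assumption in the unbounded case, so I would state that convention explicitly. The nondegeneracy $\gamma'\neq0$ is also needed and should be taken as part of the definition of a $C^1$ curve.
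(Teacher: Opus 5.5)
Your argument is correct. The paper gives no proof of its own, only a citation to \cite{Lozi-likemaps}, but in Lemma~\ref{lemma:smooth-arcs-bounded} it uses exactly your parametrization $\gamma(t)=(t,\phi(t))$ over $J=\pi_{1}(\gamma)$, so your reading of $I_{u}$ as $\pi_{1}(\gamma)$ rather than literally $[0,1]$ is the intended one. Your regularity caveat is also genuinely needed: since $0\in C^{u}_{\rho}$, a $C^{1}$ curve can come to rest and reverse its $x$-direction (e.g.\ go out along slope $\rho$ and return along slope $-\rho$, with velocity vanishing at the turn), and such a curve is not a graph over $x$.
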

	
	\begin{definition}\label{definition:strip}
		For a rectangle $U$, we will say that a curve \emph{$\gamma$ is on $U$}, if $\gamma$ disconnects $U$ into exactly two connected components, and connects the $u$-curves of $\fr U$. The \emph{interior} of $\gamma$ is defined as $\gamma$ without its endpoints. For $\gamma_{1}$, $\gamma_{2}$ curves on $U$, which have disjoint interiors inside $U$, we define the \emph{strip $S(U,\gamma_{1},\gamma_{2})$} as the closure of the connected component of $U\setminus\{\gamma_{1}\cup\gamma_{2}\}$ that has $\gamma_{1}$ and $\gamma_{2}$ in the boundary. If no ambiguity can arise, we will shorten the notation to $S(\gamma_{1},\gamma_{2})$.
	\end{definition}
	\begin{definition}
		Let $U,U'$ be rectangles. If there exist curves $\gamma_{1}$ and $\gamma_{2}$ such that $U'=S(U,\gamma_{1},\gamma_{2})$, then $U'$ will be said to be a \emph{subrectangle} of $U$.
	\end{definition}
	
	\subsubsection*{Hyperbolic sets}
	In this section, we outline basic concepts about hyperbolic sets. The section is based on Chapter 6 of \cite{katok_Hasselblatt_1995}. 
	
	\begin{definition}
		Let $\{L_{m}\colon \plane\to\plane\}_{m\in\Z}$ be a sequence of invertible linear maps and $\lambda<\mu$. We say that $\{L_{m}\}_{m\in\Z}$ \emph{admits a $(\lambda,\mu)$-splitting} if there exist decompositions $\plane=E^{+}_{m}\oplus E^{-}_{m}$ such that $L_{m}E^{\pm}_{m}=E_{m+1}^{\pm}$, and \[
		\norm{L_{m}|_{E_{m}^{-}}}\leq \lambda, \quad \norm{L^{-1}_{m}|_{E^{+}_{m+1}}}\leq \mu^{-1}.
		\]We will call  $\{L_{m}\}_{m\in\Z}$ \emph{hyperbolic} if it admits a $(\lambda,\mu)$-splitting for some $\lambda<1<\mu$.
	\end{definition}
	Let us assume $M$ is a $C^{1}$ smooth surface, $U\subset M$ is an open subset,  and $f\colon U\to M$ a $C^{1}$ diffeomorphism onto its image and $\Lambda\subset U$ a compact $f$-invariant set.
	\begin{definition}
		The set $\Lambda$ is called a \emph{hyperbolic set} for the map $f$ if there exists a Riemannian metric in an open neighborhood $U$ of $\Lambda$ and $\lambda<1<\mu$ such that for any point $x\in\Lambda$ the sequence of differentials $df_{f^{n}(x)}\colon T_{f^{n}(x)}M\to T_{f^{n+1}(x)}M$, $n\in\Z$, admits a $(\lambda,\mu)$-splitting.
	\end{definition}
	
	Let us recall a useful cone criterion for hyperbolic sets, see Corollary 6.4.8 in \cite{katok_Hasselblatt_1995}.
	\begin{proposition}\label{proposition:cone-criterion-hyp-set}
		A compact $f$-invariant set $\Lambda$ is hyperbolic if there exist $\lambda<1<\mu$ such that for every $x\in\Lambda$ there is a decomposition $T_{x}M=S_{x}\oplus T_{x}$, a family of horizontal cones $C^{u}_{\rho,x}\supset S_{x}$, and a family of vertical cones $C^{s}_{\rho,x}\supset T_{x}$, for some $\rho>0$, such that \begin{align}\label{proposition:cone-criterion-hyp-set:eq:cone}
			df_{x}C^{u}_{\rho,x}\subset \inter C^{u}_{\rho,f(x)},&\quad df^{-1}_{f(x)}C^{s}_{\rho,f(x)}\subset \inter C^{u}_{\rho,x},\\
			\norm{df_{x}v}\geq \mu\norm{v},\text{ for }v\in C^{u}_{\rho,x},&\text{ and }\norm{df^{-1}_{f(x)}w}\geq \lambda^{-1}\norm{w},~\text{ for }w\in C^{s}_{\rho,f(x)}.
		\end{align}
	\end{proposition}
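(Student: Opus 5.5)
The plan is to build the splitting directly as intersections of iterated cones, and to check the $(\lambda,\mu)$-splitting estimates using the norm inequalities in \eqref{proposition:cone-criterion-hyp-set:eq:cone}. I read the second inclusion in \eqref{proposition:cone-criterion-hyp-set:eq:cone} as $df^{-1}_{f(x)}C^{s}_{\rho,f(x)}\subset\inter C^{s}_{\rho,x}$. The inclusion into $\inter C^{u}_{\rho,x}$ as printed is evidently a typo: it is incompatible with $C^{u}_{\rho,x}\cap C^{s}_{\rho,x}=\{0\}$ and with the norm estimates. We take $\rho<1$, so the horizontal and vertical cones meet only in $0$.

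\textbf{Construction.} For $x\in\Lambda$ define
\[
E^{+}_{x}:=\bigcap_{n\geq0}df^{n}_{f^{-n}(x)}C^{u}_{\rho,f^{-n}(x)},\qquad E^{-}_{x}:=\bigcap_{n\geq0}df^{-n}_{f^{n}(x)}C^{s}_{\rho,f^{n}(x)}.
\]
This uses invariance of $\Lambda$, so that $f^{-n}(x)\in\Lambda$. By the cone inclusions the sets in each intersection are nested. Each is a closed, convex, symmetric double sector, being a linear image of one. Hence $E^{\pm}_{x}$ is a closed double sector containing a nonzero vector, by compactness of the nested unit-vector sets.

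Directly from the definitions, $df_{x}E^{+}_{x}=E^{+}_{f(x)}$ and $df_{x}E^{-}_{x}=E^{-}_{f(x)}$. Moreover $E^{+}_{x}\subset C^{u}_{\rho,x}$ and $E^{-}_{x}\subset C^{s}_{\rho,x}$, so $E^{+}_{x}\cap E^{-}_{x}=\{0\}$.

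\textbf{Each set is a line.} This is the main step. Suppose $v,w\in E^{+}_{x}$ are linearly independent. Since $\dim T_{x}M=2$, they span $T_{x}M$, so some nonzero combination $u=av+bw$ lies in $C^{s}_{\rho,x}$.

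For every $n$, the vectors $df^{-n}_{x}v$ and $df^{-n}_{x}w$ lie in $C^{u}_{\rho,f^{-n}(x)}$, by the definition of $E^{+}_{x}$. Applying the expansion estimate on horizontal cones $n$ times along the backward orbit gives $\norm{df^{-n}_{x}v}\leq\mu^{-n}\norm{v}$, and likewise for $w$. Hence $\norm{df^{-n}_{x}u}\leq(|a|\norm{v}+|b|\norm{w})\mu^{-n}\to0$.

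On the other hand, $u$ lies in the vertical cone, which is backward invariant. Iterating the second norm estimate therefore gives $\norm{df^{-n}_{x}u}\geq\lambda^{-n}\norm{u}\to\infty$. This is a contradiction, so $E^{+}_{x}$ is a line.

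The symmetric argument, iterating forward and using a combination that lies in $C^{u}_{\rho,x}$, shows that $E^{-}_{x}$ is a line. Consequently $T_{x}M=E^{+}_{x}\oplus E^{-}_{x}$.

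\textbf{Rates.} For $v\in E^{-}_{x}$ we have $df_{x}v\in E^{-}_{f(x)}\subset C^{s}_{\rho,f(x)}$. The second estimate then gives $\norm{df_{x}v}\leq\lambda\norm{v}$, which is the bound $\norm{L_{m}|_{E^{-}_{m}}}\leq\lambda$. For $w\in E^{+}_{f(x)}$, the first estimate applied to $df^{-1}w\in C^{u}_{\rho,x}$ gives $\norm{df^{-1}_{f(x)}w}\leq\mu^{-1}\norm{w}$.

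Applying this along every orbit $\{f^{n}(x)\}_{n\in\Z}$ yields a $(\lambda,\mu)$-splitting with respect to the given Riemannian metric, so $\Lambda$ is hyperbolic.

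The only delicate point is the line argument above. It relies on two-dimensionality: any two independent vectors span the whole tangent plane, so some combination of them falls into the transverse cone. In higher dimensions one would instead compare the dimensions of the cones.
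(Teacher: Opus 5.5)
Your proof is correct. The paper gives no argument of its own and simply cites Corollary 6.4.8 of Katok--Hasselblatt, and what you wrote is essentially the standard nested-cone argument behind that corollary, specialized to dimension two (where the dimension count reduces to your observation that two independent vectors force a nonzero combination into the transverse cone). Your reading of the second inclusion as $df^{-1}_{f(x)}C^{s}_{\rho,f(x)}\subset\inter C^{s}_{\rho,x}$ is the intended one. The restriction $\rho<1$ agrees with the paper's definition of cones, and it is not even needed: a vector $v\in E^{+}_{x}\cap E^{-}_{x}$ would satisfy $\mu\norm{v}\leq\norm{df_{x}v}\leq\lambda\norm{v}$, which forces $v=0$.
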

	
	We are now ready to state the crucial result of this section; see Theorem 6.4.9 in \cite{katok_Hasselblatt_1995}.
	\begin{theorem}\label{theorem:inv-manifolds-hyp-set}
		Let $\Lambda$ be a hyperbolic set for a $C^{1}$ diffeomorphism $f\colon U\to M$ such that $df$ admits a $(\lambda,\mu)$-splitting on $\Lambda$ with $\lambda<1<\mu$. Then for each $x\in\Lambda$ there is a pair of embedded $C^{1}$ discs $\StableLocMan{x}$ and $\UnstableLocMan{x}$, called the local stable manifold and the local unstable manifold of $x$, respectively, such that \begin{enumerate}[label=(\roman*)]
			\item\label{theorem:inv-manifolds-hyp-set:tangents} $T_{x}\StableLocMan{x}=E^{-}_{x}$, $T_{x}\UnstableLocMan{x}=E^{+}_{x}$,
			\item $f(\StableLocMan{x})\subset \StableLocMan{f(x)}$, $f^{-1}(\UnstableLocMan{x})\subset \UnstableLocMan{f^{-1}(x)}$,
			\item for every $\delta>0$ there exists $C(\delta)$ such that for $n\in\N$ we have \begin{align*}
					\dist(f^{n}(x),f^{n}(y))&< C(\delta)(\lambda+\delta)^{n}\dist(x,y)\text{, for }y\in \StableLocMan{x}\\
					\dist(f^{-n}(x),f^{-n}(y))&< C(\delta)(\mu-\delta)^{-n}\dist(x,y)\text{, for }y\in \UnstableLocMan{x}.
			\end{align*}
			\item\label{theorem:inv-manifolds-hyp-set:tangents:beta} there exist $\beta>0$ and a family of neighbourhoods $O_{x}$ containing the ball around $x\in\Lambda$ of radius $\beta$ such that \begin{align*}
				\StableLocMan{x}&=\{y\in U\colon f^{n}(y)\in O_{f^{n}(x)},~n=0,1,... \}\\
				\UnstableLocMan{x}&=\{y\in U\colon f^{-n}(y)\in O_{f^{-n}(x)},~n=0,1,... \}.
			\end{align*}
		\end{enumerate}
	\end{theorem}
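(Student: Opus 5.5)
This is the classical Hadamard--Perron stable manifold theorem for hyperbolic sets, and I would prove it with the graph transform. The plan first works in adapted local coordinates along a single orbit, then produces $\UnstableLocMan{x}$ as the fixed point of a contraction on a space of graphs, and finally obtains $\StableLocMan{x}$ by applying the same argument to $f^{-1}$.

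\emph{Step 1: local coordinates.} Using the Riemannian metric from the definition of hyperbolic set, I would fix $x\in\Lambda$ and write $x_m:=f^m(x)$ for $m\in\Z$. Exponential charts (or simply affine charts, since $M$ is a surface) identify a neighbourhood of $x_m$ with a ball in $E^{+}_{x_m}\oplus E^{-}_{x_m}$. In these charts $f$ becomes a sequence of maps $F_m=L_m+r_m$, where $L_m=df_{x_m}$ admits the $(\lambda,\mu)$-splitting and $r_m(0)=0$, $dr_m(0)=0$. By compactness of $\Lambda$ and uniform continuity of $df$, for every $\delta>0$ there is $\beta>0$ with $\operatorname{Lip}(r_m|_{B_\beta})<\delta$ uniformly in $m$; the angle between $E^{\pm}$ is also bounded away from zero uniformly. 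After cutting off $r_m$ outside $B_\beta$ with a bump function, I may assume $\operatorname{Lip}(r_m)<\delta$ globally.

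\emph{Step 2: graph transform.} Consider sequences $\{\phi_m\colon E^{+}_{x_m}\to E^{-}_{x_m}\}$ of $1$-Lipschitz maps with $\phi_m(0)=0$, and define $\Gamma$ by $\operatorname{graph}(\Gamma\phi)_{m+1}=F_m(\operatorname{graph}\phi_m)$. This is well defined because the $E^{+}$-component of $F_m$ restricted to a graph is a bi-Lipschitz homeomorphism, expanding by at least $\mu-2\delta$. Invariance of the unstable cone of slope $1$ follows from $\lambda<1<\mu$ together with the smallness of $\delta$, and this is exactly the inequality that the cone criterion of Proposition~\ref{proposition:cone-criterion-hyp-set} encodes. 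A direct estimate then shows that $\Gamma$ contracts the sup-distance weighted by $\norm{v}$ by a factor of about $(\lambda+\delta)/(\mu-\delta)<1$. Its unique fixed point gives Lipschitz graphs, and $\UnstableLocMan{x}$ is the $\beta$-piece of the graph at $m=0$. Invariance~(ii) holds by construction.

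\emph{Step 3: the contraction estimate (iii) and the characterization (iv).} For $y\in\UnstableLocMan{x}$, the backward iterates stay on the invariant graphs, and $F_m^{-1}$ restricted to a graph contracts by $(\mu-\delta)^{-1}$. Translating back through the charts, whose distortion is uniformly bounded, introduces the constant $C(\delta)$ and gives~(iii). For~(iv), I would take a point $y$ whose backward orbit stays in $O_{f^{-n}(x)}$, a $\beta$-ball. If $y$ were off the graph, the $E^{-}$-component of its displacement from the graph would grow under $F^{-1}$ by a factor of at least $(\lambda+\delta)^{-1}$ relative to the $E^{+}$-component. That growth is incompatible with staying in $B_\beta$ forever, so the set of such $y$ equals the graph.

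\emph{Step 4: $C^1$ smoothness and tangency, which I expect to be the main obstacle.} The graph transform immediately gives Lipschitz graphs, but not $C^1$ ones. I would run the fibre contraction argument: apply $\Gamma$ jointly to pairs $(\phi_m, P_m)$, where $P_m$ is a candidate field of tangent planes in the unstable cone. Cone invariance makes the action on planes a fibre contraction, and the fibre contraction theorem then gives a continuous invariant plane field equal to the derivative of the fixed point. Since $dr_m(0)=0$, the plane at $x$ is $E^{+}_x$, which gives~(i). The stable statements follow by applying Steps 1--4 to $f^{-1}$, which swaps the roles of $\lambda$ and $\mu^{-1}$.
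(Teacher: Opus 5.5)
Your proposal is correct and takes essentially the same route as the source the paper relies on: the paper gives no proof of its own and simply cites Theorem 6.4.9 of Katok--Hasselblatt, which is obtained by applying the Hadamard--Perron theorem (proved there by the graph transform) to the sequence of local maps along the orbit of $x$ in exponential charts, exactly as in your Steps 1--4. The one point to tighten is (iii) for arbitrarily small $\delta$ once the manifolds are fixed: the constant $C(\delta)$ does not come from chart distortion, but from the fact that orbits on the local manifold enter, after a uniformly bounded number of steps, the smaller ball on which $\operatorname{Lip}(r_m)<\delta$; you would use (iv) to see that the manifolds built at different cut-off scales agree locally.
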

	\begin{remark}
		Note that from the cone criterion \eqref{proposition:cone-criterion-hyp-set:eq:cone}, the $df$-invariance of the decomposition $E^{-}\oplus E^{+}$ and \ref{theorem:inv-manifolds-hyp-set:tangents} follow that $\StableLocMan{x}$ is a $s$-curve and $\UnstableLocMan{x}$ is a $u$-curve, with respect to the families of cones $\{C^{s}_{\rho,z}\}_{z\in M}$ and $\{C^{u}_{\rho,z}\}_{z\in M}$. Moreover, note that there is no unique way of choosing local invariant manifolds.
	\end{remark}

	\subsubsection*{General invariant manifolds and global manifolds}
	One can define invariant manifolds in general, not only for points from a hyperbolic set. Let $g\colon M \to M$ be a diffeomorphism on the surface $M$. Let us recall that the~local \emph{stable manifold} $\StableLocMan{p}$ and the local \emph{unstable manifold} $\UnstableLocMan{p}$ at a point $p$ are defined as follows \begin{align*}
		\StableLocMan{\varepsilon,p} &= \{z \in M\colon \lim_{n\to\infty}d(g^{n}(z),g^{n}(p))=0,~\dist(g^{n}(z),g^{n}(p))\leq \varepsilon\}\text{ and }\\
		\UnstableLocMan{\varepsilon,p} &= \{z \in M\colon \lim_{n\to\infty}d(g^{-n}(z),g^{-n}(p))=0,~\dist(g^{-n}(z),g^{-n}(p))\leq \varepsilon\},
	\end{align*} for some $\varepsilon>0$ small enough. Note that this definition agrees with the one posed for $p$ contained in a hyperbolic set, see condition \ref{theorem:inv-manifolds-hyp-set:tangents:beta} of Theorem \ref{theorem:inv-manifolds-hyp-set}.
	
	The \emph{global invariant manifolds} at $p$ are defined by iterating the local manifolds, that is, \[
	\UnstableMan{p}=\bigcup_{i\geq 0}g^{n}(\UnstableLocMan{p})\text{, and }\StableMan{p}=\bigcup_{i\geq 0}g^{-n}(\StableLocMan{p}).
	\]
	In the context of this article, most of the invariant manifolds we consider will be smooth one-dimensional curves. These manifolds may arise as a consequence of the stable manifold theorem \ref{theorem:inv-manifolds-hyp-set:tangents}, or as subcurves of global manifolds. Nevertheless, some invariant manifolds may not be smooth, if the system is not smooth, or some invariant manifolds may not have tangent vectors contained in the cone' families. Such a phenomenon is crucial in the piecewise hyperbolic or non-uniformly hyperbolic systems and creates rich dynamical behavior seen in systems of this kind.
	
	Note that the definition of invariant manifolds depends on the length of the manifold. We will not signify the length in the notation. That is, if we state that for $\UnstableLocMan{z}$ or $\StableLocMan{z}$ some property holds, we mean that such an invariant local manifold exists for some $\varepsilon$.
	\subsubsection*{Hyperbolic fixed points}
	A fixed point $x\in M$ such that the sequence of differentials $\{df_{f^{n}(x)}=df_{x}\}_{n\in\Z}$ is hyperbolic will be called a \emph{hyperbolic fixed point}. Naturally, the sequence is constant, so its hyperbolicity is equivalent to the fact that the absolute value of no eigenvalue of $df_{x}$ is equal to $1$. Note that in this case, the assumptions of Theorem \ref{theorem:inv-manifolds-hyp-set} are satisfied, that is, a hyperbolic fixed point is a hyperbolic set, and so there are local manifolds $\StableLocMan{x}$ and $\UnstableLocMan{x}$. The manifolds are in fact invariant, that is, \[
	f(\StableLocMan{x})\subset \StableLocMan{x}\text{ and } f^{-1}(\UnstableLocMan{x})\subset \UnstableLocMan{x}.
	\]For the reasoning in this work, we will need a perturbation result that allows the map $f$ to be only an endomorphism, that is, $f$ is allowed not to be injective. Recall from \cite{palistakens:hyp} the following.
	\begin{proposition}[Appendix 4, Thm. 2, \cite{palistakens:hyp}]\label{proposition:inv-manifolds-endo}
		Let $f\colon \R^{2}\to\R^{2}$ be a $C^{1}$ map with a hyperbolic fixed point $p\in\R^{2}$. Then for $\varepsilon>0$ sufficiently small, the local stable manifold is a $C^{1}$ submanifold. Also there is a neighbourhood $\cc{U}$ of $f$ in the $C^{1}$ topology and a continuous map $p\colon \cc{U}\to \R^{2}$ such that for $f'\in\cc{U}$, $p(f')$ is a hyperbolic fixed point of $f'$ and such that the local stable manifold $\StableLocMan{\varepsilon,p(f')}$, as a $C^{1}$ manifold, depends continuously on $f'$, with respect to the $C^{1}$ topology.
	\end{proposition}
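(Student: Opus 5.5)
The plan is to prove the statement with the Lyapunov–Perron method, since it only uses forward iterates and hence never needs $f$ to be invertible. First I would handle the fixed point. Because $p$ is hyperbolic, $df_{p}-\mathrm{Id}$ is invertible. The implicit function theorem, applied to $(f',z)\mapsto f'(z)-z$ on a $C^{1}$-neighbourhood $\cc{U}$ of $f$, then gives a unique fixed point $p(f')$ near $p$, depending continuously on $f'$. Shrinking $\cc{U}$, the derivative $df'_{p(f')}$ stays close to $df_{p}$, so $p(f')$ is still hyperbolic. The splitting $E^{s}_{f'}\oplus E^{u}_{f'}$ also varies continuously, since the spectral projections depend continuously on the matrix.

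Next, I translate $p(f')$ to the origin and write $f'=A+r$ with $A=df'_{p(f')}=A_{s}\oplus A_{u}$, where $\norm{A_{s}}\leq\lambda<1$ and $\norm{A_{u}^{-1}}\leq\mu^{-1}<1$. Note that $A_{u}$ is invertible, while $A_{s}$ may be singular; this is harmless because $A_{s}$ is never inverted. Using a bump function supported in $B_{\varepsilon}(0)$, I replace $r$ by a globally defined $\tilde r$ that agrees with $r$ on $B_{\varepsilon/2}(0)$ and has $\mathrm{Lip}(\tilde r)\leq\eta$. Here $\eta$ can be made as small as we like by taking $\varepsilon$ small, because $r(0)=0$ and $dr(0)=0$. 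The stable set is then characterized as the set of initial points of bounded forward orbits $(x_{n})_{n\geq0}$, and these are exactly the solutions of
\[
x_{n}=A_{s}^{n}\xi+\sum_{k<n}A_{s}^{n-1-k}\tilde r_{s}(x_{k})-\sum_{k\geq n}A_{u}^{-(k+1-n)}\tilde r_{u}(x_{k}),\qquad \xi\in E^{s}.
\]
For $\eta$ small compared with the gaps $1-\lambda$ and $1-\mu^{-1}$, the right-hand side defines a uniform contraction on $\ell^{\infty}(\N,\R^{2})$. One can even use a weighted space with weight $\theta^{-n}$, $\lambda<\theta<1$, to get exponential convergence. The fixed point $x(\xi)$ depends Lipschitz on $\xi$, and setting $\phi(\xi)=\pi_{u}x_{0}(\xi)$ presents the stable manifold as a graph over $E^{s}$. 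Since $\tilde r=r$ near the origin, restricting to orbits that stay in $B_{\varepsilon/2}$ recovers $\StableLocMan{\varepsilon,p(f')}$ as a piece of this graph.

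Continuous dependence in $C^{0}$ follows from the uniform contraction principle, because the operator above depends continuously on $\tilde r$, and hence on $f'$ in the $C^{1}$ topology, uniformly on bounded sets. The main obstacle is $C^{1}$ smoothness of $\phi$ and $C^{1}$-continuity in $f'$. Here I would first try the fiber contraction theorem. I differentiate the Perron equation formally in $\xi$, which gives a linear equation for sequences of derivatives $(D_{n})$ in which $d\tilde r(x_{n})$ enters. This defines a fiber map over the base contraction that is itself a contraction with constant depending only on $\eta,\lambda,\mu$. It is continuous in the base point because $d\tilde r$ is continuous, though not Lipschitz. The fiber contraction theorem then yields an attracting invariant section, and that section is the derivative of $x(\xi)$. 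Running the same argument with $f'$ added as a parameter shows that both $\phi$ and $d\phi$ vary continuously with $f'$.

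An alternative check on the smoothness is to verify that the graph is tangent to a thin stable cone field, which is forward invariant under $df'$ near the origin. That would let one identify $T\StableLocMan{}$ with the limit of pulled-back cones, in the spirit of Proposition~\ref{proposition:cone-criterion-hyp-set}, but only forward iterates are used.
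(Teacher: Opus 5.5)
The paper does not prove this statement. It is quoted from Palis--Takens, so your proposal is a self-contained alternative rather than a reconstruction of the paper's argument. Your Lyapunov--Perron scheme is sound:
\begin{itemize}
\item bounded forward orbits of the cut-off map $A+\tilde r$ are exactly the $\ell^\infty$-solutions of your equation with $\xi=\pi_s x_0$;
\item the right-hand side is a contraction in $x$ with constant at most $\eta\bigl(\frac{1}{1-\lambda}+\frac{1}{\mu-1}\bigr)$;
\item neither $f'$ nor $A_s$ is ever inverted.
\end{itemize}
What your route buys over the citation is that it shows why the endomorphism version is the one the paper needs. At $b=0$ the map $f_{a,0}(x,y)=(f_a(x),0)$ has a zero eigenvalue at its fixed points. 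The continuity of $\StableLocMan{z_{+}}$ in $(a,b)$ down to $b=0$, used for Lemma \ref{lemma:basic-properties} and in the proof of Theorem \ref{mainthm:maximal-attractors}, is exactly your $C^1$-continuity in $f'$. Moreover, after the cut-off you only use $f'$ on a small ball around $p(f')$. So the same argument gives the localized Proposition \ref{proposition:inv-manifolds-endo2}, which is what the paper actually applies, since $f_{a,b}$ need not be $C^1$ along $\Rc$. The citation only buys brevity.

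A few details to tighten.
\begin{itemize}
\item Two steps need $d\tilde r$ to be \emph{uniformly} continuous, not merely continuous: the fiber map's continuity in the base point for the $\ell^\infty$ topology, and the Nemytskii map $x\mapsto(\tilde r(x_k))_k$ being $C^1$ on $\ell^\infty$. The latter is what lets you identify the invariant section with the derivative, via the chain rule on the iterates. Uniform continuity holds because $\tilde r$ has compact support; say so, since this is exactly where bare $C^1$ regularity is used.
\item Fix $\varepsilon$ first and then shrink $\cc{U}$, so that $\eta$ is uniform in $f'$.
\item It is simpler to keep the splitting of $df_p$ for all $f'$, so that all graphs live over the same $E^s$. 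Then $dr_{f'}(0)=df'_{p(f')}-df_p$ is small rather than zero, which is enough.
\item Let the cut-off agree with $r$ on a ball containing the closed $\varepsilon$-ball, not just $B_{\varepsilon/2}$.
\item Use that $\tilde f$ contracts along the graph to reduce the condition on the whole forward orbit to a condition on the initial point. Then $\StableLocMan{\varepsilon,p(f')}$ is the part of the graph inside the ball, a $C^1$ arc in the saddle case relevant here.
\end{itemize}

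Finally, your alternative cone check states the invariance backwards. Stable cones are not forward invariant under $df'$: forward iteration opens them toward $E^u$. What holds, as in Lemma \ref{lemma:cone-inv}, is pull-back invariance: if $df'_z v$ lies in the stable cone at $f'(z)$, then $v$ lies in the interior of the stable cone at $z$. This is also the form that still makes sense when $df'$ is singular.
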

	Let us call a $C^{1}$ open disc $U\subset \R^{2}$ any subset diffeomorphic with an open unit ball. One immediately obtains the following.
	\begin{proposition}\label{proposition:inv-manifolds-endo2}
		Let $U\subset \R^{2}$ be a $C^{1}$ open disc. Let $f\colon U\to\R^{2}$ be a $C^{1}$ map with a hyperbolic fixed point $p\in U$. Then for $\varepsilon>0$ sufficiently small, the local stable manifold is a $C^{1}$ submanifold. In addition, there is a neighborhood $\cc{U}$ of $f$ in the $C^{1}$ topology and a continuous map $p\colon \cc{U}\to U$ such that for $f'\in\cc{U}$, $p(f')$ is a hyperbolic fixed point of $f'$ and such that the local stable manifold $\StableLocMan{\varepsilon,p(f')}$, as a $C^{1}$ manifold, depends continuously on $f'$, with respect to the $C^{1}$ topology.
	\end{proposition}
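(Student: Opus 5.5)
The statement to prove is Proposition \ref{proposition:inv-manifolds-endo2}. The plan is to reduce it to Proposition \ref{proposition:inv-manifolds-endo} by extending maps on the disc to maps on the whole plane. All conclusions concern only a small neighbourhood of $p$ and $C^{1}$-small perturbations, so such an extension should suffice.

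\textbf{Step 1: a cut-off extension.} Choose $r>0$ with $\cl B_{2r}(p)\subset U$. Fix a smooth bump function $\psi\colon\R^{2}\to[0,1]$ with $\psi\equiv1$ on $B_{r}(p)$ and $\operatorname{supp}\psi\subset B_{2r}(p)$. For any $C^{1}$ map $g\colon U\to\R^{2}$ define
\[
\hat g:=\psi\, g+(1-\psi)\,L,
\]
where $L(z)=p+df_{p}(z-p)$ is the linearization of $f$ at $p$. Outside $B_{2r}(p)$ we set $\hat g=L$. Then $\hat g\colon\R^{2}\to\R^{2}$ is $C^{1}$, it agrees with $g$ on $B_{r}(p)$, and $p$ is still a hyperbolic fixed point of $\hat f$.

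\textbf{Step 2: continuity of the extension.} The assignment $g\mapsto\hat g$ is continuous from the $C^{1}$ topology on maps $U\to\R^{2}$ to the $C^{1}$ topology on maps $\R^{2}\to\R^{2}$. Indeed,
\[
\hat g-\hat g'=\psi\,(g-g'),
\]
and its $C^{1}$ norm is bounded by a constant, depending only on $\psi$, times $\|g-g'\|_{C^{1}(\cl B_{2r}(p))}$.

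\textbf{Step 3: apply the whole-plane result.} Apply Proposition \ref{proposition:inv-manifolds-endo} to $\hat f$. This gives a $C^{1}$-neighbourhood $\hat{\cc U}$ of $\hat f$ and a continuous map $\hat p\colon\hat{\cc U}\to\R^{2}$ assigning to each map a hyperbolic fixed point, whose local stable manifolds $\StableLocMan{\varepsilon,\cdot}$ vary continuously. Define $\cc U$ as the preimage of $\hat{\cc U}$ under $g\mapsto\hat g$, which is open by Step 2, and set $p(f'):=\hat p(\hat f')$. Shrinking $\cc U$ if necessary, continuity gives $p(f')\in B_{r/2}(p)$. Since $f'=\hat f'$ there, $p(f')$ is a hyperbolic fixed point of $f'$ with the same derivative.

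\textbf{Step 4: local stable manifolds agree.} Take $\varepsilon<r/2$. Every point $z$ of $\StableLocMan{\varepsilon,p(f')}$ computed for $\hat f'$ has its whole forward orbit in $B_{\varepsilon}(p(f'))\subset B_{r}(p)$. On that ball $\hat f'=f'$, so the orbit of $z$ under $f'$ is well defined and coincides with its orbit under $\hat f'$. The reverse inclusion holds by the same argument. Hence the local stable manifolds of $f'$ and $\hat f'$ coincide. Their $C^{1}$ submanifold structure and continuous dependence on $f'$ then follow by composing the continuous map of Step 2 with that of Proposition \ref{proposition:inv-manifolds-endo}.

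The main obstacle is Step 4. One has to make sure the $\varepsilon$ supplied by Proposition \ref{proposition:inv-manifolds-endo} can be taken uniformly smaller than $r/2$ over the whole neighbourhood. This should hold because that proposition states its conclusions for all sufficiently small $\varepsilon$, and $\cc U$ can be shrunk further if needed.
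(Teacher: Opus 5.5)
Your proposal is correct and is the reduction to Proposition~\ref{proposition:inv-manifolds-endo} that the paper calls immediate without further argument. Your cut-off extension $\hat g=\psi g+(1-\psi)L$ supplies the localization the paper omits, and it uses only that $U$ is open. The ``main obstacle'' in Step 4 is not a real difficulty: in Proposition~\ref{proposition:inv-manifolds-endo} the radius $\varepsilon$ is fixed before the neighbourhood is chosen, so you can take $\varepsilon<\min\{\varepsilon_0(\hat f),r/2\}$ from the outset. The only thing left implicit is the normalization $\hat p(\hat f)=p$, which you need when shrinking the neighbourhood so that $p(f')\in B_{r/2}(p)$.
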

	\section{Perturbations of unimodal maps: admissible families}\label{section:admissible-families}
	\subsection{Definition}
    \begin{definition}
        We will say that a point $q\in \R$ is expanding for some map $f\colon \R\to \R$ if $f$ is $C^1$ in a neighbourhood of $q$ and $|df_q|>1$.
    \end{definition}
	\begin{definition}\label{def:unimodal-map}
		Let $I$ be the compact interval $[q_{-},q_{+}]$ that contains $0$ in the interior and $f\colon \R\to \R$ be a continuous map. If $f$ has a unique maximum at $c=0$, $q_{-}$ is an expanding fixed point, $f(\fr I)\subset \{q_{-}\}$, and $I$ is $f$-invariant, then $f$ will be called a \emph{unimodal map}, and $c$ will be called a \emph{critical} point.
	\end{definition}
	Let $R=I\times I$, $\Rc= \{0\}\times\R$, and $\Rp$ and $\Rn$ be, respectively, right and left connected components of $\plane\setminus\Rc$. The primary objects of this article are $2$-parameter families of homeomorphisms $f_{a,b}\colon \plane\to \plane$, which are diffeomorphisms on $\plane\setminus\Rc$, constructed in the following steps, obtained as perturbations of unimodal families. A similar definition can be found in \cite{StrangeAttractorswithOneDirectionofInstability}.
	\begin{description}
		\item[Step A]\label{stepA} Let $f_{a}\colon \R\to \R$, for $a\in A:=[a_{1},a_{2}]$, be a family of unimodal maps such that \begin{enumerate}[label=(A\arabic*)]
			\item\label{stepA1} $(a,x)\mapsto f_{a}(x)$ is $C^{1}$ on $A\times(\R\setminus\{c\})$,
			\item\label{stepA3} there exists $a_{*}\in(a_{1},a_{2})$ such that $f_{a_{*}}(c)=q_{+}$, and $f_{a}(c)>q_{+}$ for $a\in (a_{*},a_{2})$
			\item\label{stepA2} there exist $\delta_{0}>0$, $\tilde c_{0}>0$, $\tilde c_{1}>0$ and a function $\tilde\theta(\cdot)$ with $\lim_{w\to0}\tilde\theta(w)=0$, such that for every $0<\delta<\delta_{0}$ we have $|df^{n}_{a}(x)|\geq {\tilde c}_{0}\tilde\theta(\delta)e^{\tilde{c}_{1}n}$, whenever $x,...,f^{n-1}(x)\notin B_{\delta}(c)$
		\end{enumerate}
		
		In addition, we also consider the following strengthening of \ref{stepA2}:
		\begin{enumerate}[label=(A3H)]
			\item\label{stepA2-hyp} there exist $\tilde{c}_{1}>0$ and $\alpha>0$ such that $|df^{n}_{a}(x)|\geq e^{\tilde{c}_{1}n}$, for every $n\in\N$, whenever $f_{a}^{i}(x)\neq c$ for $i\in\N$ and $|a-a_{*}|< \alpha$,
		\end{enumerate}and the high expansion condition, necessary for our proof of mixing:	\begin{enumerate}[label=(A3E)]
			\item\label{stepA2-expansion} $e^{\tilde c_{1}}>\sqrt{2}$
		\end{enumerate}
		\item[Step B]\label{stepB}			
		Let $f_{a,b}\colon \plane\to\plane$, for $b\in B:=[0,b_{1}]$, where $0<b_1<1$, be a family of homeomorphisms on $\plane$ such that
		\begin{enumerate}[label=(B\arabic*)]
			\item\label{stepB1} $(a,b,z)\mapsto g_{a,b}(z):=(f_{a,b}-f_{a,0})(z)$ is $C^{1}$ on $ A\times B\times \plane$,
			\item\label{stepB11} $f_{a,b}$ is an extension of $(f_{a}(x),0)\colon \R\to\R\times\{0\}$, that is we have $f_{a,0}(x,y)=(f_{a}(x),0)$,
			\item\label{stepB2} for every compact $K\subset \plane$ there exists a constant $C>0$ independent of parameters such that $||g_{a,b}|_{K}||_{C^{1}}<Cb$.
		\end{enumerate}
	\end{description}
	\begin{remark}\label{remark:stepsA-B}
		Note that the existence of a positive Lyapunov exponent is immediately implied by \ref{stepA2-hyp}. Moreover, by the mean value theorem, condition \ref{stepB2} is always fulfilled whenever we assume additionally that $(x,y,a,b)\mapsto g_{a,b}(x,y)$ is $C^{2}$ on $ A\times B\times \plane$.
	\end{remark}
	
	\begin{definition}
		A family $\{f_{a,b}\}_{a\in A,b\in B}$ as in \ref{stepA1}-\ref{stepA3} and \ref{stepB1}-\ref{stepB2} will be called \emph{an admissible family of perturbations of a unimodal family} $\{f_{a}\}_{a\in A}$, or an admissible family, for short. An admissible family satisfying \ref{stepA2-hyp} will be called \emph{piecewise uniformly hyperbolic}. If we additionally assume \ref{stepA2-expansion}, we will say that the family has a high expansion.
	\end{definition}
	
	\begin{convention}
		From now on, we will reserve $\{f_{a,b}\}_{a\in A,b\in B}$ for an admissible family. By $\{f_a\}_{a\in A}$ we will denote the one dimensional family yielding the admissible family. Moreover, we will often drop the dependence on $a,b$ and write $f$ for $f_{a,b}$. Thus, whenever we write $f_a$, we mean an element of the one dimensional family, and by $f$ we mean an element of the admissible family, and by $f_{a,0}$ we mean the endomorphism $(f_{a},0)$.
		
		The constants $\tilde{c}_{0},\tilde{c}_{1}$ and function $\tilde{\theta}$ will be fixed from now on.
		
		Moreover, whenever we will say that for $a \in A'$ and $b\in B'$, for some $A'\subset A$, $B'\subset B$, some conditions hold, we will mean that they hold for parameters $(a,b)\in A'\times B'$. 
	\end{convention}
	\begin{remark}
		The conditions contained in Steps A. and B. are satisfied by many natural families of planar maps, such as family of border-collision normal forms, Lozi-like families, H\'enon family, or more general diffeomorphic $C^{2}$ perturbations of Misiurewicz maps satisfying \ref{stepA1},\ref{stepA2}, with negative Schwarzian derivative, non-flat critical point, and hyperbolic repelling periodic points. The case of border-collision normal forms is discussed later in the article. The Wang-Young families are admissible families as well, see \cite{StrangeAttractorswithOneDirectionofInstability} for definitions.
	\end{remark}
	Let $\Ryc=f(\Rc)$. Define $C_{\delta}:=\{z\in\plane\colon d(z,\Rc)<\delta\}$. Let $\pi_1(z)=(z)_1=x$, for $z=(x,y)$, be the projection onto the first variable. We have $\pi_1\colon \R^2\to \R^2$, and by basic identification, also $\pi_1\colon T_z \R^2\to \R$, where $T_z \R^2$ is the tangent space to $\R^2$ at $z\in \R^2$.
    
	We have the following basic observation.
	\begin{remark}\label{remark:phic}
		As the curve $\Ryc$ is a $u$-curve for $b>0$ sufficiently small, define $\phi_{c}\colon \R\to \R$ to be the function which parameterizes $\Ryc$ and satisfies $\frac{d}{dt}\phi_c> 0$. 
	\end{remark}
	
	\begin{convention}\label{definition:directions-on-sets}
		We define a partial order on all subsets of $\plane$, by saying that $E$ is \emph{to the left of} $D$, equivalently, $D$ is \emph{to the right of} $E$, if $x \leq y$ for $(x,\phi_{c}(x))\in E\cap \Ryc$ and $(y,\phi_{c}(y))\in D\cap \Ryc$.  In a more compact notation, we have $\pi_{1}(E\cap \Ryc)\leq \pi_{1}(D\cap \Ryc)$, where $\pi_{1}\colon \plane\to\R$ is the projection on the first variable.
	\end{convention}
	\begin{figure}[!ht]
		\centering
		\resizebox{0.7\textwidth}{!}{%
			\begin{circuitikz}
				\tikzstyle{every node}=[font=\tiny]
				\draw [->, >=Stealth] (5,12) -- (12,12);
				\draw [->, >=Stealth] (8.5,8.5) -- (8.5,15.25);
				\draw [short] (5.5,9) .. controls (6,11.5) and (6.25,12.5) .. (8.5,14);
				\draw [short] (8.5,14) .. controls (10.5,12.5) and (10.75,11.75) .. (11.25,9.25);
				\draw [dashed] (5.75,15) -- (5.75,9.25);
				\draw [dashed] (10.75,15.25) -- (10.75,9.25);
				\node at (5.75,12) [circ] {};
				\node at (10.75,12) [circ] {};
				\node [font=\tiny] at (5.3,11.75) {$q_-=z_-$};
				\node [font=\tiny] at (11,11.5) {$q_+$};
				\draw [short] (5.25,9.75) -- (12.5,14.75);
				\node [font=\tiny] at (12.25,14.75) {$x=y$};
				\node [font=\tiny] at (6.75,13) {$y=f_a(x)$};
				\draw [dashed] (9.75,12.75) -- (9.75,12);
				\node at (9.75,12) [circ] {};
				\node [font=\tiny] at (9.75,11.75) {$z_+$};
			\end{circuitikz}
		}%
		
		\label{fig:stepA}
	\end{figure}
	
	\begin{figure}
		\begin{subfigure}{.5\textwidth}
			\centering
			\resizebox{\textwidth}{!}{%
				\begin{circuitikz}
					\tikzstyle{every node}=[font=\large]
					\draw [->, >=Stealth] (1.5,11.75) -- (13.75,11.75);
					\draw [->, >=Stealth] (7.5,5.75) -- (7.5,18);
					\draw [short] (2,17.5) -- (2,6.5);
					\draw [short] (12.5,17.5) -- (12.5,6.5);
					\draw [dashed] (2,17.5) -- (12.5,17.5);
					\draw [dashed] (2,6.5) -- (12.5,6.5);
					\node at (2,8.25) [circ] {};
					\node at (8.25,13.75) [circ] {};
					\draw [short] (2,8.25) .. controls (7,9.5) and (8,9.75) .. (11.25,11.75);
					\draw [short] (11.25,11.75) .. controls (9.5,13.5) and (8.5,14.5) .. (6.25,15.5);
					\draw [short] (5,15.5) .. controls (7.5,14.5) and (8.25,14) .. (10.25,11.75);
					\draw [short] (10.25,11.75) .. controls (7.75,10.25) and (7.25,10) .. (4.75,9.25);
					\draw [dashed] (2,17) .. controls (7.25,16.25) and (9,15) .. (12,11.75);
					\draw [dashed] (4.5,15.5) -- (4.5,15.5);
					\draw [dashed] (4.75,15.75) -- (4.75,15.75);
					\draw [dashed] (2,7.75) .. controls (8,8.75) and (10,10.25) .. (12,11.75);
					\draw [dashed] (6,9) -- (6,9);
					\draw [dashed] (5,8.75) -- (5,8.75);
					\draw [dashed] (5,8.5) -- (5,8.5);
					\draw [dashed] (2.25,15.25) .. controls (7.25,14.5) and (8,13.25) .. (9.25,11.75);
					\draw [dashed] (2,9.25) .. controls (6.75,10) and (6.75,10.75) .. (9.25,11.75);
					\node at (1.75,8.5) {$z_-$};
					\node  at (8.25,14) {$z_+$};
					\draw [->, >=Stealth] (5.75,13.75) -- (6.5,14.5);
					\draw [->, >=Stealth] (4,10) -- (3.5,8.75);
					\node  at (4,10.25) {$\UnstableLocMan{z_-}$};
					\node  at (5.5,13.5) {$\UnstableLocMan{z_+}$};
					\node  at (9.5,9.75) {$f(Q)$};
					\node  at (11.25,6.25) {$Q$};
					\node  at (1.5,7.25) {$\StableLocMan{z_-}$};
					\node  at (13.25,7) {$f^{-1}(\StableLocMan{z_-})\cap (\R\times I)$};
				\end{circuitikz}
			}%
			\caption{orientation preserving case}
			\label{fig:manifolds-positions-OP}
		\end{subfigure}
		\begin{subfigure}{.5\textwidth}
			\centering
			\resizebox{\textwidth}{!}{%
				\begin{circuitikz}
					\tikzstyle{every node}=[font=\large]
					\draw [->, >=Stealth] (1.5,11.75) -- (13.75,11.75);
					\draw [->, >=Stealth] (7.5,5.75) -- (7.5,18);
					\draw [short] (2,17.5) -- (2,6.5);
					\draw [short] (12.5,17.5) -- (12.5,6.5);
					\draw [dashed] (2,17.5) -- (12.5,17.5);
					\draw [dashed] (2,6.5) -- (12.5,6.5);
					\node at (2,8.75) [circ] {};
					\node at (8.75,14.25) [circ] {};
					\draw [short] (2,8.75) .. controls (6.75,9.75) and (7.75,10) .. (10.75,11.75);
					\draw [short] (10.75,11.75) .. controls (9.25,13.25) and (8.25,14.25) .. (6,15);
					\draw [short] (6.25,15.5) .. controls (8.75,14.5) and (9.5,14) .. (11.5,11.75);
					\draw [short] (11.5,11.75) .. controls (9,10.25) and (8.5,10) .. (6,9.25);
					\draw [dashed] (2,17) .. controls (7.25,16.25) and (9,15) .. (12,11.75);
					\draw [dashed] (4.5,15.5) -- (4.5,15.5);
					\draw [dashed] (4.75,15.75) -- (4.75,15.75);
					\draw [dashed] (2,7.75) .. controls (8,8.75) and (10,10.25) .. (12,11.75);
					\draw [dashed] (6,9) -- (6,9);
					\draw [dashed] (5,8.75) -- (5,8.75);
					\draw [dashed] (5,8.5) -- (5,8.5);
					\draw [dashed] (2.25,15.25) .. controls (7.25,14.5) and (8,13.25) .. (9.25,11.75);
					\draw [dashed] (2,9.25) .. controls (6.75,10) and (6.75,10.75) .. (9.25,11.75);
					\node  at (1.75,8.5) {$z_-$};
					\node  at (8.75,14.5) {$z_+$};
					\draw [->, >=Stealth] (5.75,13.75) -- (6.5,14.5);
					\draw [->, >=Stealth] (4,10) -- (3.75,9.25);
					\node  at (4,10.25) {$\UnstableLocMan{z_-}$};
					\node  at (5.5,13.5) {$\UnstableLocMan{z_+}$};
					\node  at (9.5,9.75) {$f(Q)$};
					\node  at (11.25,6.25) {$Q$};
					\node  at (1.5,7.25) {$\StableLocMan{z_-}$};
					\node  at (13.25,7) {$f^{-1}(\StableLocMan{z_-})\cap (\R\times I)$};
				\end{circuitikz}
			}%
			\caption{orientation reversing case}
			\label{fig:manifolds-positions-OR}
		\end{subfigure}
		\caption{Positions of invariant manifolds and the regions $Q$ and its image $f(Q)$}
	\end{figure}
	\subsection{Hyperbolicity}
	For a fixed $\delta>0$ and compact $K\subset \plane$, let us define \[S(\delta):=\sup\{|(df_{a,0,z})_{1}|\colon a\in A ,~z\in K\setminus C_{\delta}\}.\]
	\begin{lemma}\label{lemma:cone-inv}
		Let $K\subset \plane$ be compact. Let $0<c_{0}<\tilde{c}_{0}$, and $0<c_{1}<\tilde{c}_{1}$. Then there exist $\delta_{0}>0$, $\theta\colon [0,\infty)\to[0,\infty)$, with $\lim_{q\to0}\theta(q)=0$, and $c_{2}>0$ such that for any $0<\delta<\delta_{0}$, $n\in\N$, there is $b_{2}>0$ such that for $b\in(0,b_{2})$, $c_{2}b\leq\rho\leq 1$, and $z\in K$ with $f^{i}(z)\in K\setminus C_{\delta}$, for $i=0,...,n-1$, we have \begin{equation}
			\label{lemma:cone-inv:C1}  df^{i}_{z}(C^{u}_{z,\rho})\subset \inter C^{u}_{f^{i}(z),\rho} \text{ and } ||df^{i}_{z}(v)||\geq c_{0}\theta e^{c_{1}i}||v||  \text{ for every } v\in C^{u}_{z,\rho}.
		\end{equation}Moreover, for any $\mu>0$ and $\delta>0$, there is $b_{3}>0$, such that for any $0<b<b_{3}$ and $z\in f(K\setminus C_{\delta})$ we have \begin{align}\label{lemma:cone-inv:C2}
			df^{-1}_{z}(C^{s}_{z,\rho})\subset \inter C^{s}_ {f^{-1}(z),\rho}&\text{ for}~0<\rho<1, \text{ and }  \\\notag||df^{-1}_{z}(w)||\geq \mu||w||\text{ for every }w\in C^{s}_{z,\rho}&\text{ for }0<\rho\leq(\mu^{-1}-Cb)/S,
		\end{align}where $S:=S(\delta)$ and $C>0$ is the constant from \ref{stepB2}.
	\end{lemma}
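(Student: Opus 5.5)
We plan to treat $f=f_{a,b}$ as a $C^{1}$-small perturbation of the degenerate endomorphism $f_{a,0}(x,y)=(f_{a}(x),0)$ on $K\setminus C_{\delta}$. There \ref{stepB1}--\ref{stepB2} give $df_{z}=df_{a,0,z}+E_{z}$ with $\norm{E_{z}}\le Cb$. The matrix of $df_{a,0,z}$ is $\begin{pmatrix} f_{a}'(x) & 0\\ 0&0\end{pmatrix}$, and $f_a'$ is continuous away from $c$. We will use two facts from \ref{stepA2}. First, for $z\notin C_{\delta}$ we have $|f_{a}'(x)|\ge m(\delta)>0$, since the only zero of the derivative is at $c$ and $f_a'$ is continuous on $A\times(K\setminus C_\delta)$. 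Second, $|f'_{a}(x)|\le S(\delta)$.

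\emph{Unstable cones.} Take $v=(v_{1},v_{2})\in C^{u}_{\rho}$, so $|v_{2}|\le\rho|v_{1}|$ and $\rho\le1$. Then $df_{z}v=(f_{a}'(x)v_{1}+O(Cb|v_1|),\,O(Cb|v_{1}|))$. The slope of the image is therefore at most $2Cb/(m(\delta)-2Cb)$. We choose $c_{2}$ so that $2Cb/(m-2Cb)<c_{2}b\le\rho$; this gives strict invariance into $\inter C^{u}$. Since $m$ depends on $\delta$, a cleaner choice is to fix $c_{2}=3C/m(\delta_0/2)$-type constants. Alternatively we can absorb this into $b_{2}$: require $b<b_{2}(\delta)$ so that $m(\delta)-2Cb\ge 2C/c_{2}$.

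\emph{Expansion.} The first coordinate of $df^{i}_{z}v$ is $\prod_{j<i}f_{a}'(x_{j})\,v_{1}$ up to a multiplicative error $\prod_{j<i}(1\pm 2Cb/m(\delta))$. Here $x_{j}$ denotes the first coordinate of $f^{j}(z)$. The real orbit $x_j$ differs from the one-dimensional orbit $f_a^j(x_0)$, so we cannot apply \ref{stepA2} to it verbatim. Instead we compare with the one-dimensional orbit. For fixed $n$, continuity gives that the $f_{a,b}$-orbit and the $f_{a,0}$-orbit of $z$ stay $\eta$-close for $i\le n$ once $b<b_{2}(\delta,n,\eta)$. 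Hence $x_{j}\notin B_{\delta/2}(c)$ along $f_a$. Applying \ref{stepA2} with $\delta/2$ gives $|df_a^{i}(x_0)|\ge\tilde c_{0}\tilde\theta(\delta/2)e^{\tilde c_{1}i}$. By uniform continuity of $f_a'$ off $B_{\delta/2}(c)$, each factor $f_a'(x_j)$ is within a ratio $e^{\pm(\tilde c_{1}-c_{1})/2}$ of the corresponding one-dimensional factor.

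Combining, we get $\norm{df^{i}v}\ge |(df^iv)_1|\ge \tilde c_{0}\tilde\theta(\delta/2)e^{c_{1}i}(1-\epsilon)|v_{1}|$. Since $\norm{v}\le\sqrt2|v_{1}|$, the bound $c_{0}\theta(\delta)e^{c_{1}i}\norm{v}$ follows with $\theta(\delta):=\tilde\theta(\delta/2)/\sqrt2$ and $c_0<\tilde c_0$ absorbing $1-\epsilon$. This is where the dependence of $b_{2}$ on $n$ enters, matching the quantifier order in the statement.

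\emph{Stable cones.} Take $w\in C^{s}_{z,\rho}$ at $z=f(z')$ with $z'\in K\setminus C_{\delta}$, and write $u=df^{-1}_{z}w$, so $w=df_{z'}u$. Suppose $u\notin\inter C^{s}_{\rho}$, i.e. $|u_{1}|\ge\rho|u_{2}|$. Then $|w_{2}|\le Cb\norm{u}$ and $|w_{1}|\ge(m-Cb)|u_1|-Cb|u_2|$. Since $\norm{u}\lesssim|u_1|/\rho$, the image $w$ lies in $C^{u}$-like directions, which for $b$ small contradicts $|w_{1}|\le\rho|w_{2}|$; this gives the first claim. For the expansion we use the norm bound directly: $\norm{w}\le \norm{df_{z'}u}\le |f_a'|\,|u_{1}|+Cb\norm{u}\le (S\rho+Cb)\norm{u}$, because $|u_{1}|\le\rho|u_{2}|\le\rho\norm u$. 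Hence $\norm{u}\ge \norm{w}/(S\rho+Cb)\ge\mu\norm{w}$ exactly when $\rho\le(\mu^{-1}-Cb)/S$. Here $b_{3}$ is chosen with $Cb<\mu^{-1}$ and small enough for the invariance step.

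The main obstacle is the unstable expansion estimate. Because the planar orbit only shadows the one-dimensional one for finitely many steps, $b_2$ must depend on $n$. One must also check that the cone-error accumulation stays controlled uniformly in $i\le n$.
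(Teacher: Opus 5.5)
Your argument is sound, and the stable-cone half is the paper's own computation. You argue by contradiction where the paper bounds $|(df^{-1}w)_1|/|(df^{-1}w)_2|$ directly, and the estimate $\|w\|\le (S\rho+Cb)\|u\|$ is the same in both.

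The unstable expansion is organised differently.
\begin{itemize}
\item \textbf{The paper's route.} It fixes a block length $N=N(\delta)$ with $\tilde c_0\tilde\theta(\delta)e^{\tilde c_1N}(1+\rho^2)^{-1/2}-b_{11}>e^{c_1N}$. It takes $b$ so small that $\|df^i-df^i_{a,0}\|<b_{11}$ for $i\le N$. It then proves $\|df^rv\|\ge c_0\theta e^{c_1r}\|v\|$ for $r<N$ and $\|df^Nv\|\ge e^{c_1N}\|v\|$, and concatenates blocks using cone invariance. The bound therefore holds for every $n=r+pN$ with $b$ independent of $n$.
\item \textbf{Your route.} You shadow the whole length-$n$ segment by the one-dimensional orbit, so your $b_2$ depends on $n$. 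The statement's quantifiers allow this, and you avoid the gluing step.
\item \textbf{Cost.} The uniform-in-$n$ version is what is used later: for the hyperbolic set in Proposition~\ref{proposition:hyperbolic-set-admissible-family}, and for the uniform expansion of the local unstable manifold of $z_{-}$ in Lemma~\ref{lemma:position-inv-manifolds-fixed-points}. You would recover it by running your argument with $n=N$ and then gluing, which is exactly the paper's scheme.
\item \textbf{Gain.} Your $\delta/2$-shadowing step makes explicit a point the paper passes over. The paper applies \ref{stepA2} with radius $\delta$ to the one-dimensional orbit of $\pi_1(z)$, and compares $df^i$ with $df^i_{a,0}$ at the same point. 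But the hypothesis only keeps the planar orbit away from $C_\delta$.
\end{itemize}

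One correction: neither of your remedies makes $c_2$ independent of $\delta$. When $m(\delta)\to0$, as for smooth unimodal maps, invariance of $C^u_{c_2b}$ at points with $|x|$ close to $\delta$ genuinely forces $c_2$ to be of order at least $C/m(\delta)$. For example, for $f(x,y)=(1-ax^2+by,bx)$ the vector $(1,0)$ at $x=\delta$ is sent to a vector of slope $b/(2a\delta)$. Consequently:
\begin{itemize}
\item the choice $c_2=3C/m(\delta_0/2)$ only covers $\delta\ge\delta_0/2$;
\item the condition $m(\delta)-2Cb\ge 2C/c_2$ cannot be reached by shrinking $b$ once $m(\delta)<2C/c_2$.
\end{itemize}
The paper's proof also takes $c_2=2Cc_2'$ with $c_2'$ depending on $\delta$, so you should simply let $c_2$ depend on $\delta$.

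Similarly, your stable-cone invariance needs $b$ of order at most $\rho\, m(\delta)/C$, so $b_3$ depends on $\rho$. The paper's proof acknowledges this explicitly, even though the statement's quantifiers do not.
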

	\begin{proof}
		Let $0<\rho<1$ and $\delta_0>0$ be given by the definition of admissible families. Let $0<\delta<\delta_0$. Fix $b_{11}>0$ small enough so that $\tilde\theta (1+\rho^{2})^{-1/2}\tilde{c}_{1} e^{\tilde{c}_{2}}-b_{11}>0$ for $\tilde\theta:=\tilde\theta(\delta)$. Choose $N\in\N$ large enough for $(1+\rho^{2})^{-1/2}\tilde\theta \tilde{c}_{1} e^{\tilde{c}_{2}N}-b_{11}>e^{c_{2}N}$. Now, let us find $0<b_{12}<b_{11}$ so that for every $b\in(0,b_{12})$ and $0\leq i\leq N$ we have \begin{equation}\label{eq:lemma:cone-inv:1}
			||df^{i}-df^{i}_{a,0}||< b_{11}.
		\end{equation}
		Let $z=(x,y)\in K$ and consider $C^{u}_{z,\rho}$. The idea of the proof is to use perturbation to show properties \eqref{lemma:cone-inv:C1} separately for $df^{r}$, $1\leq r< N$, and $df^{N}$, and reason \eqref{lemma:cone-inv:C1} for any $n=r+pN$, $p\in\N$. First, note that \eqref{eq:lemma:cone-inv:1} implies expansion under $1\leq r\leq N$. Indeed, for any $v\in C^{u}_{\rho}$, $||v||=1$, we have \begin{align}\label{eq:lemma:cone-inv:2}
			||df^{r}v||\geq ||df^{r}_{a,0}v||-b_{11}=|df^{r}_{a}(x)||v_{1}|-b_{11}\geq
			|df_{a}^{r}(x)|(1+\rho^{2})^{-1/2}-b_{11}\geq\\\notag(1+\rho^{2})^{-1/2}\tilde{\theta}\tilde{c}_{0}e^{\tilde{c}_{1}r}-b_{11}\geq c_{0}e^{c_{1}r}\left((1+\rho^{2})^{-1/2}\tilde{\theta}-b_{11}\tilde{c}_{0}^{-1}e^{-\tilde{c}_{1}r}\right).
		\end{align} Therefore, \eqref{lemma:cone-inv:C1} holds for $1\leq n<N$ and  $\theta=(1+\rho^{2})^{-1/2}\tilde{\theta}-b_{11}\tilde{c}_{0}^{-1}e^{-\tilde{c}_{1}r}$. In the case $r=N$, by the choice of $N$, we have $||df^{N}v||\geq e^{c_{1}N}$.
		
		We now deal with the invariance of unstable cones under $f$. Let $v\in C^{u}_{\rho}$ with $\rho\geq c_{2}b$, for some $c_{2}>0$ to be fixed later. First, note that by definition $||df-df_{a,0}||<Cb$, for some $C>0$. Hence, we can compute similarly to \eqref{eq:lemma:cone-inv:2} \begin{multline*}
			|(dfv)_{1}|\geq (1+\rho^{2})^{-1/2}\tilde{\theta}\tilde{c}_{0}e^{\tilde{c}_{1}}-Cb\geq (1+\rho^{2})^{-1/2}\left(\tilde{\theta}\tilde{c}_{0}e^{\tilde{c}_{1}}-(1+\rho^{2})^{1/2}Cb\right)\geq\\ (1+\rho^{2})^{-1/2}\left(\tilde{\theta}\tilde{c}_{0}e^{\tilde{c}_{1}}-2Cb\right)=:(1+\rho^{2})^{-1/2}{c'_{2}}^{-1},
		\end{multline*}for $b$ small enough and $c'_{2}>0$ dependent on $\delta,\tilde{c}_{0},\tilde{c}_{1}$, and defined in the last equality. Then, the definition of admissible families implies $|(dfv)_{2}|\leq Cb$, and so\[|(dfv)_{2}|/|(dfv)_{1}|\leq Cb(1+\rho^{2})^{1/2}{c'}_{2}< bc_{2}\leq\rho
		\] for $b$ small enough, $c_{2}=2Cc'_{2}$ and $\rho\geq bc_{2}$, showing the invariance of the unstable cones.
		
		We now show expansion under $pN$, for any $p\in\N$. By induction on $p$, using the invariance under $df^{N}$, we have \[
		||df^{pN}v||\geq ||df^{(p-1)N}(df^{N}v)||||df^{N}v||\geq  e^{c_{1}(p-1)N}e^{c_{1}N}= e^{c_{1}pN}.
		\]It remains to prove expansion under any $n=r+pN$, for some $0\leq r<N$ and $p\in\N$. We have, since $df^{pN}v\in C^{u}$, \[
		||df^{n}v||\geq ||df^{r}(df^{pN}v)||||df^{pN}v||\geq c_{0}\theta e^{c_{1}n},
		\]which concludes the proof of \eqref{lemma:cone-inv:C1}.
		
		Let us show the last claim of the lemma. We first show the inclusion. Let $v\in T_{z}\plane$ and $z\notin C_{\delta}$. Then, as in the beginning of the proof, we have \[
		|(dfv)_{1}|\geq |(df_{a,0}v)_{1}| - |(dgv)_{1}|\geq L|v_{1}|-Cb||v||,
		\] where $L:=\inf\{|(df_{a,0,z})_{1}|\colon a\in A ,~z\in K\setminus C_{\delta}\}.$ Dividing the inequality by $L||v||$, we obtain \begin{equation}\label{eq:lemma:cone-inv:2:eq:2}
			|v_1|/||v||\leq \left(Cb+|(dfv)_{1}|/||v||\right)/L.
		\end{equation}
		By the definition of admissible family, we have $|(dfv)_{2}|=|(dgv)_{2}|\leq Cb||v||$. Hence, $ ||v||\geq |(dfv)_{2}|C^{-1}b^{-1}$. Substituting this into \eqref{eq:lemma:cone-inv:2:eq:2}, we have \[
		|v_1|/||v||\leq \frac{Cb}{L} (1+|(dfv)_{1}|/|(dfv)_{2}|).
		\]
		Let us now put $v=df^{-1}w$, for $w\in C^{s}_{z,\rho}$ and $z\in f(K\setminus C_{\delta})$. Using $|w_1|\leq \rho |w_2|$, we see \[
		\dfrac{|(df^{-1}w)_{1}|}{||df^{-1}w||}\leq bC(1+\rho)/L\leq b/\tilde L,
		\]where $\tilde L:=LC^{-1}(1+\rho)^{-1}$. Now, \[|(df^{-1}w)_{1}|^{2}=||df^{-1}w||^{2}-|(df^{-1}w)_{2}|^{2}\geq |(df^{-1}w)_{1}|^{2}b^{-2}\tilde L^2-|(df^{-1}w)_{2}|^{2},\] and so \begin{equation}\label{lemma:cone-inv:eq:1}
			\dfrac{|(df^{-1}w)_{1}|}{|(df^{-1}w)_{2}|}\leq(b^{-2} \tilde L^2-1)^{-1/2}\leq b\tilde L^{-1}(1-b^{2}\tilde L^{-2})^{-1/2}\rho,
		\end{equation}
		for $b$ small enough and $0<\rho<1$. Note that since $\tilde L$ is bounded away from zero uniformly in parameters, \eqref{lemma:cone-inv:eq:1} is strictly bounded by $\rho$ for all $b$ small enough dependent only on $\rho, \delta$ and $K$. This shows the invariance.
		
		Now, let $\rho>0$ be close to $0$. Let $df^{-1}w=v\in C^{s}_{\rho}$ for some $w\in C^{s}_{\rho}$. Recall $S:=\sup\{|(df_{a,0,z})_{1}|\colon a\in A ,~z\in K\setminus C_{\delta}\}$. We compute, using the invariance of stable cones \[
		||dfv||\leq ||df_{a,0}v||+Cb||v||\leq S |v_1|+Cb||v||\leq S\rho |v_2|+Cb||v||\leq (S\rho+Cb)||v||.\]Thus, for $b$ and $\rho$ small enough we have $||dfv||\leq \mu^{-1} ||v||$, which concludes the proof.
	\end{proof}
	\begin{remark}\label{remark:what-are-stablevectorsetc}
		From now on, whenever we call a vector $v$ almost horizontal, we will mean a vector contained in an unstable $c_{2}b$-cone. By a $u$-curve we mean a curve with tangent vectors being almost horizontal. Similarly, in the case of stable cones, we require that almost vertical vectors and $s$-curves be contained in stable $b$-cones. We see that with such a convention, by Lemma \ref{lemma:cone-inv}, almost horizontal vectors are expanded by $df$, and almost vertical vectors are expanded by $df^{-1}$.
	\end{remark}
	\begin{proposition}\label{proposition:hyperbolic-set-admissible-family}
		For any compact $K$ there exist $\delta_{0}>0$ and $b_{0}>0$ such that for any $0<\delta<\delta_{0}$ and $0<b<b_{0}$ the set $\cc{H}\subset K$ of $z\in K$ with $f^{i}(z)\in K\setminus C_{\delta}$, for $i\in \Z$, is a hyperbolic set of $f$. Moreover, for any such $z$ there exists its stable local manifold that is almost vertical.
	\end{proposition}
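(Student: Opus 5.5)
The plan is to verify the cone criterion of Proposition \ref{proposition:cone-criterion-hyp-set} for a suitable iterate of $f$ on $\cc{H}=\cc{H}_{\delta}$, and to arrange that the only smallness condition on $b$ is imposed at one fixed scale $\delta_{1}$, not at the scale $\delta$. Note that $\cc{H}_{\delta}$ is compact and $f$-invariant, being an intersection of the closed sets $f^{-i}(K\setminus C_{\delta})$. It also increases as $\delta$ decreases. So no argument that simply applies Lemma \ref{lemma:cone-inv} at scale $\delta$ can give a $b_{0}$ that works for all $\delta<\delta_{0}$, because the thresholds $b_{2}$, $b_{3}$ and $S(\delta)$ there all degenerate as $\delta\to0$. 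The constants $\lambda,\mu$ of the resulting splitting may depend on $\delta$; only $b_{0}$ has to be uniform.

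\textbf{Step 1 (fixed scale).} Fix $\delta_{1}$ below the $\delta_{0}$ of Lemma \ref{lemma:cone-inv}. Choose $N$ and $b_{0}$ from Lemma \ref{lemma:cone-inv} at scale $\delta_{1}$, shrinking $b_{0}$ so that \eqref{lemma:cone-inv:C1} and \eqref{lemma:cone-inv:C2} hold with some fixed $\mu_{1}>1$ and cone widths $c_{2}b\le\rho\le 1$. Then any orbit segment of length $N$ lying in $K\setminus C_{\delta_{1}}$ maps unstable cones strictly inside unstable cones and expands them by $e^{c_{1}N}$. Likewise, $df^{-1}$ preserves and expands the stable $b$-cones along such segments.

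\textbf{Step 2 (visits to $C_{\delta_{1}}\setminus C_{\delta}$).} This is the main obstacle. For $z\in\cc{H}_{\delta}$ whose orbit enters the annulus $C_{\delta_{1}}\setminus C_{\delta}$ at time $j$, the horizontal derivative can be small, of the order of $|\pi_{1}f^{j}(z)|\ge\delta$. The vertical direction is still contracted by $Cb$, which is independent of $\delta$. My first attempt is a binding-type argument. Since $|\pi_{1}f^{j}(z)|\ge\delta$, the map $f_{a,0}$ sends the point to within a controlled distance of $f_{a}(c)$, and the subsequent orbit of length $p(\delta)$ shadows the critical orbit. Condition \ref{stepA2} applied to $f_{a}$ along the shadowed orbit, which stays outside $B_{\delta}(c)$ for all times, gives $|df_{a}^{p+1}|\ge\tilde c_{0}\tilde\theta(\delta)e^{\tilde c_{1}(p+1)}>1$ once $p\ge p_{0}(\delta)$. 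The key point is that \ref{stepA2} already holds at every scale $\delta$, so the only $\delta$-dependence is in the length of the block, never in the size of $b$.

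\textbf{Step 3 (perturbation and the splitting).} Here I compare $df^{m}$ with $df_{a,0}^{m}$ along whole blocks. This would normally need $b$ small depending on $m$, and hence on $\delta$. To avoid that, I would use that after one step every vector is almost horizontal, with slope at most $Cb/|(dfv)_{1}|$. I then propagate the cone invariance one step at a time using only the $\delta_{1}$-estimates of Step 1 together with the lower bound $|(df_{a,0})_{1}|\gtrsim\delta$ inside the annulus. This gives invariance of cones of width $\rho\sim b/\delta$, which is still less than $1$ for $b<b_{0}\le\delta_{0}^{2}$, say. Expansion follows from Step 2 over a common iterate $m=m(\delta)$.

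Once cone invariance and uniform expansion hold for $f^{m}$ on $\cc{H}_{\delta}$, Proposition \ref{proposition:cone-criterion-hyp-set} gives hyperbolicity of $\cc{H}_{\delta}$ for $f^{m}$, and hence for $f$. Theorem \ref{theorem:inv-manifolds-hyp-set} then provides local stable manifolds tangent to $E^{-}$. Since $E^{-}$ lies in the stable $b$-cones preserved by $df^{-1}$ (Step 1 together with the inclusion part of \eqref{lemma:cone-inv:C2}), these manifolds are $s$-curves, i.e.\ almost vertical. If the binding step in Step 2 cannot be closed without extra assumptions on $c$, I would fall back on making explicit the dependence $b_{0}=b_{0}(\delta_{0})$ obtained via the monotonicity $\cc{H}_{\delta}\subset\cc{H}_{\delta'}$ for $\delta'<\delta$.
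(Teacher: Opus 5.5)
The gap is in Steps~2--3, the attempt to make $b_0$ independent of $\delta$. You are right that the thresholds of Lemma~\ref{lemma:cone-inv} degenerate as $\delta\to0$. The paper's proof just invokes that lemma together with Proposition~\ref{proposition:cone-criterion-hyp-set}, so what it actually establishes is a $b_0$ depending on $\delta$. However, your repair fails exactly at the point you flag. At a point at distance about $\delta$ from $\Rc$, the horizontal derivative of $f_{a,0}$ is small, of size $\eta(\delta)\to0$ (about $\delta$ for a quadratic critical point, the size you use), while $df-df_{a,0}$ is of size $Cb$. So the image of an almost horizontal vector has slope of order $b/\delta$, and this is smaller than $1$ only if $b$ is small compared with $\delta$. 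The choice $b<b_0\le\delta_0^{2}$ does not give this, because $\delta$ ranges over all of $(0,\delta_0)$ and can be much smaller than $b$. In that regime the perturbation dominates, and $df$ may turn an almost horizontal vector into an almost vertical one. The binding step does not address this. One-dimensional expansion along the orbit is already given by \ref{stepA2}, so no shadowing of the critical orbit (which is uncontrolled for general $a$) is needed. The obstruction is purely the two-dimensional comparison of $df$ with $df_{a,0}$.

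In fact no argument can produce a $\delta$-uniform $b_0$ for smooth admissible families. Fix a small $b>0$, take $K\supset Q$ and $a=a_{*,b}$. At this parameter $\UnstableMan{z_{-}}$ (orientation preserving case) or $\UnstableMan{z_{+}}$ (orientation reversing case) is tangent to $f^{-1}(\StableLocMan{z_{-}})\cap\Rp$. Typically the tangency orbit does not meet $\Rc$: for smooth families the fold point comes from a point at small but nonzero distance from $\Rc$. Its closure is then the orbit together with the fixed points. This is a compact set at some positive distance $d(b)$ from $\Rc$, hence contained in $\cc{H}$ for every $\delta<d(b)$. But a tangency between stable and unstable manifolds cannot lie inside a hyperbolic set.

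Your fallback also runs the wrong way. $\cc{H}$ grows as $\delta$ decreases, so hyperbolicity at scale $\delta_0$ says nothing about smaller $\delta$; monotonicity only transfers a $b_0$ chosen at a small scale to larger scales. The correct statement, and what the paper's argument really proves, is this: for every $0<\delta<\delta_0$ there is $b_0(\delta)>0$ such that $\cc{H}$ is hyperbolic for $0<b<b_0(\delta)$, with almost vertical local stable manifolds. The almost verticality comes from $E^{-}_{z}\subset\inter C^{s}_{\rho,z}$ and continuity of tangents along the $C^{1}$ manifold, after shortening it. This weaker form is all that is used later, in Lemma~\ref{lemma:z-omits-cdelta->almost-vert-stmani} and in the proof of Theorem~\ref{mainthm:maximal-attractors}, where $\delta$ is fixed before $b$ is made small. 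Your Step~1 and your final paragraph, carried out at scale $\delta$ instead of a fixed $\delta_{1}$, are exactly that proof.
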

	\begin{proof}
		First, note that $\cc{H}$ is compact and invariant. Indeed, since $C_{\delta}$ is open, $K\setminus C_{\delta}$ is compact and so $\cc{H}=\bigcap_{i\in\Z}f^{i}(K\setminus C_{\delta})$ is compact as well. The invariance of $\cc{H}$ is immediate. Let us show hyperbolicity.
		
		We choose $\delta_{0}>0$ and $b_{0}$, so that for $0<\delta<\delta_{0}$ and $0<b<b_{0}$ the cones' invariance and expansion conditions \eqref{lemma:cone-inv:C1}, \eqref{lemma:cone-inv:C2} of Lemma \ref{lemma:cone-inv} are satisfied. Then for any $z\in \cc{H}$, from the definition of $\cc{H}$ follows that \eqref{lemma:cone-inv:C1} and \eqref{lemma:cone-inv:C2} hold. This means that the assumptions of Proposition \ref{proposition:cone-criterion-hyp-set} are satisfied, and so $\cc{H}$ is a hyperbolic set.
		
		It remains to show that any $z\in\cc{H}$ has some almost vertical stable manifold. Note that the invariance of stable cones condition \eqref{lemma:cone-inv:C2} holds with $\rho= b$, if $b$ is small enough. Moreover, according to Lemma \ref{lemma:cone-inv} the invariant subspace $E_{z}^{-}=T_{z}\StableLocMan{z}$ is contained in $\inter C^{s}_{\rho, z}$, and so by the fact that $\StableLocMan{z}$ is $C^{1}$, for $z'\in \StableLocMan{z}$ close enough to $z$ we have $T_{z'}\StableLocMan{z}\subset \inter C^{s}_{\rho, z'}$. This means that there exists a stable local manifold of $z$ that is almost vertical.
	\end{proof}

	\begin{remark}\label{remark:case-b-0}[The case of $b=0$]
		Let us note that in the case of $b=0$, all local stable manifolds of points $z\in \cc{H}$, defined in Proposition \ref{proposition:hyperbolic-set-admissible-family}, are vertical segments intersecting $\Ryc$. Indeed, this is a consequence of the fact that $f_{a,0}$ projects all points onto $\Ryc$.
	\end{remark}
	\subsection{Basic properties}
	We denote by $\pi_{1}(\cdot)=(\cdot)_{1}$ and $\pi_{2}(\cdot)=(\cdot)_{2}$ the projections on the first and second variables. 
	
	\begin{convention}
		We will write $A^{b} \nearrow (a',a_{*})$, for some $a'\in A$, if $A^{b}\subset A^{b'}$ for $b\leq b'$ and $\bigcup_{b}A^{b}=(a',a_{*})$.
	\end{convention}
	\begin{remark}
		The one dimensional maps $f_{a}$ have two fixed points, one equal to $q_{-}$, and the other contained in $[0,q_+]$. By abuse of notation, we will denote the fixed points of $f_{a}$ by, respectively, $z_-$ and $z_+$. Those fixed points are hyperbolic and so persist under perturbations. We will denote by the same notation their hyperbolic continuations for $f_{a,b}$, $b>0$ small. The reader will know from the context whether the fixed points $z_-,z_+$ correspond to the one dimensional maps $f_{a}$ or their perturbations $f_{a,b}$. 
	\end{remark}
	The reader is advised to keep in mind Figure \ref{fig:renormalization}, as it shows the relative positions of invariant manifolds of the fixed points. We will assume that $f$ maps the left half plane to the component of $f(\plane)\setminus\Ryc$ below $\Ryc$, and the right half plane to the component above. Let $Q$ be the connected bounded component of $E:=(\R\times I)\setminus(\StableLocMan{z_{-}}\cup f^{-1}(\StableLocMan{z_{-}})\cap \Rp)$.  Note that $Q$ is well-defined. Indeed, as $(a,b)\to (a,0)$, $\StableLocMan{z_{-}}\cup f^{-1}(\StableLocMan{z_{-}})\cap \Rp \to \{q_{-}\}\times \R\cup \{q_{+}\}\times \R$, in the Hausdorff metric. Therefore, $Q\to[q_{-},q_{+}]\times I$, which is bounded. In the following lemma, we gather some basic facts about admissible families. 
	\begin{lemma}\label{lemma:basic-properties}	There exists $b_{0}\in B$ such that for $b\in B_{0}:=(0,b_{0})$ there are intervals $A^{b} \nearrow (a',a_{*})$, for some $a'\in A$, such that the following hold for $(a,b)\in A^{b}\times\{b\}$
		\begin{enumerate}
			\item\label{lemma:basic-properties:prop1} for $b$ small enough $Q$ is a forward invariant rectangle
			\item\label{lemma:basic-properties:prop2} $f$ maps the left and right half planes diffeomorphically to the connected components of $\plane\setminus\Ryc$, and $\gamma:=f(Q)\cap \Ryc$ is a curve in $\Ryc$ with $\length(\gamma)\to 0$ as $b\to0$
			\item\label{lemma:basic-properties:prop3} $f$ has two unique hyperbolic fixed points, $z_{-}$ in the left half plane,  and the other, $z_{+}$, in the right half plane
			\item\label{lemma:basic-properties:prop4} $\StableLocMan{z_{+}}$ is an $s$-curve on $Q$, and $ f(Q)\setminus \StableLocMan{z_{+}} $ has three connected components, one in the upper half plane, one in the lower half plane, and one, containing $f(Q)\cap \Ryc$ in the right half plane.
		\end{enumerate}
	\end{lemma}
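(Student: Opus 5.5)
The plan is a perturbative argument: each of the four properties is first checked for the degenerate map $f_{a,0}=(f_{a},0)$. Then we use the $C^{1}$-closeness in \ref{stepB2} and the persistence of hyperbolic objects (Proposition \ref{proposition:inv-manifolds-endo2}) to carry it over to $f_{a,b}$ for small $b>0$. The nested intervals $A^{b}$ appear because the admissible parameter window must shrink as $b\to0$: the geometry degenerates as $a\to a_{*}$, since there $f_{a}(c)$ approaches $q_{+}$ and the image of the fold approaches $\StableLocMan{z_{-}}$ and its preimage. For each $\epsilon>0$ we choose $b(\epsilon)$ so that all properties hold on $[a'+\epsilon',a_{*}-\epsilon]\times(0,b(\epsilon))$. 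Then $A^{b}:=(a',a_{*}-\epsilon(b))$ with $\epsilon(b)\searrow0$ gives $A^{b}\nearrow(a',a_{*})$.

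First we handle properties \ref{lemma:basic-properties:prop3} and \ref{lemma:basic-properties:prop2}. For $b=0$ the fixed points of $f_{a}$ on $I$ are $q_{-}$ and a point $z_{+}\in(0,q_{+})$. The second is hyperbolic because $|df_{a}(z_{+})|\ge\tilde c_{0}\tilde\theta e^{\tilde c_{1}}$ is forced by \ref{stepA2} along the fixed orbit, after possibly shrinking $\delta$ and passing to iterates. The planar map $f_{a,0}$ has derivative with eigenvalues $df_{a}(z_{\pm})$ and $0$. By Proposition \ref{proposition:inv-manifolds-endo2} both fixed points persist as hyperbolic fixed points $z_{\pm}(a,b)$. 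We get uniqueness from a compactness argument: fixed points of $f_{a,b}$ in a compact $K\supset Q$ converge to fixed points of $f_{a,0}$ as $b\to0$, and near each hyperbolic one the fixed point is locally unique. The sides are fixed because $z_{-}$ lies left and $z_{+}$ right of $\Rc$, with a definite gap. For \ref{lemma:basic-properties:prop2}, $f$ is a homeomorphism and a diffeomorphism off $\Rc$, so it maps $\Rn,\Rp$ onto the two components of $\plane\setminus\Ryc$. The curve $\gamma=f(Q)\cap\Ryc$ is the image of $Q\cap\Rc$. This is a vertical segment of height $\le|I|+o(1)$, and its image has length $O(b)$ because $\|d(f-f_{a,0})\|<Cb$ while $f_{a,0}$ collapses vertical segments on $\Rc$ to the point $(f_{a}(c),0)$.

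For \ref{lemma:basic-properties:prop1}, Proposition \ref{proposition:inv-manifolds-endo2} and Lemma \ref{lemma:cone-inv} show that $\StableLocMan{z_{-}}$ is an almost vertical $s$-curve $C^{1}$-close to $\{q_{-}\}\times I$. Likewise $f^{-1}(\StableLocMan{z_{-}})\cap\Rp$ is an $s$-curve close to $\{q_{+}\}\times I$; the preimage branch in $\Rp$ is controlled by \eqref{lemma:cone-inv:C2}, since $q_{+}$ is far from $\Rc$. So $Q$ is bounded by two $s$-curves and two horizontal $u$-curves $\R\times\fr I$, and is therefore a rectangle. Forward invariance is the main obstacle. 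We need $f(Q)\subset Q$, i.e.\ that $f(Q)$ lies strictly inside $(q_{-},q_{+})$ horizontally and strictly inside $I$ vertically. The vertical part is easy, since $\pi_{2}f=O(b)$ on $Q$. Horizontally, the left side holds because points near $\StableLocMan{z_{-}}$ stay near it and $f_{a}(I)\subset I$ with $f_{a}\ge q_{-}$. The delicate part is the fold: $\max\pi_{1}f(Q)\approx f_{a}(c)+O(b)$. We need this to be left of the curve $\StableLocMan{z_{-}}$'s preimage near $q_{+}$, i.e.\ $f_{a}(c)+O(b)<q_{+}-O(b)$. For $a<a_{*}$, which by \ref{stepA3} and monotone unfolding gives $f_{a}(c)<q_{+}$, this holds once $b$ is small relative to $q_{+}-f_{a}(c)$. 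This is exactly what produces the $b$-dependent windows $A^{b}$, and I would make the quantitative dependence of $b$ on $a_{*}-a$ explicit here.

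Finally, for \ref{lemma:basic-properties:prop4}, $\StableLocMan{z_{+}}$ is a near-vertical $s$-curve through $z_{+}$. We extend it by backward iteration within $Q\setminus C_{\delta}$ using the invariance of stable cones. For $b$ small it is close to $\{z_{+}\}\times I$, so it crosses both horizontal faces of $Q$ and is a curve on $Q$. The set $f(Q)$ is a thin horseshoe-shaped band, the image of the rectangle folded along $\gamma$. Since $z_{+}<f_{a}(c)$ for $a$ near $a_{*}$, this band crosses the vertical line $\{x=z_{+}\}$ twice, once on the lower branch and once on the upper. Hence $f(Q)\setminus\StableLocMan{z_{+}}$ splits into three components: the tip containing $\gamma$ to the right, and the two arms, one above and one below $\Ryc$, as stated. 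The arm components are connected because the branches are graphs of $u$-curves transverse to the $s$-curve. They intersect it exactly once by the graph representation of Lemma \ref{lemma:cone->gamma-a-function}.
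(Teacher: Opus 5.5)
Your route is the same as the paper's. The paper's proof only asserts that (1)--(3) follow from the one-dimensional picture by perturbation, and (4) from Proposition~\ref{proposition:inv-manifolds-endo2} plus the verticality of $\StableLocMan{z_{+}}$ at $b=0$. Your treatment of (2), (3), (4), of the vertical containment in (1), and of the fold side of (1) via the margin $q_{+}-f_{a}(c)$ is a correct and much more detailed version of that. The step that does not work as written is the left-hand part of $f(Q)\subset Q$.

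First, $f(Q)$ cannot lie strictly inside $Q$ horizontally. The point $z_{-}\in\StableLocMan{z_{-}}\subset\fr Q$ is fixed, so $z_{-}\in f(Q)\cap\fr Q$; the paper notes this at the start of the trapping-region subsection. More importantly, the reason you give, ``points near $\StableLocMan{z_{-}}$ stay near it and $f_{a}\ge q_{-}$'', is a $C^{0}$-perturbation argument. $C^{0}$-closeness says nothing about which side of $\StableLocMan{z_{-}}$ an image lands on. The inclusion $f_{a}(I)\subset I$ is not strict at the left end, since both $q_{-}$ and $q_{+}$ are sent to $q_{-}$. So an $O(b)$ error could a priori push images of points near either vertical face of $Q$ to the left of $\StableLocMan{z_{-}}$. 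This is the delicate side, not the fold, and the paper's ``feasible for a one dimensional family, hence for its perturbation'' hides the same issue.

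The repair needs $C^{1}$ information, in four steps.
\begin{enumerate}
\item The left face is mapped into $\StableLocMan{z_{-}}$ by invariance of the local stable manifold, and the right face is mapped into it by definition.
\item In a $b$-independent neighbourhood of the faces, $\pi_{1}df(1,0)=f_{a}'(x)+O(b)$ has the sign of $f_{a}'(q_{-})>1$, respectively $f_{a}'(q_{+})<0$. These signs are forced by invariance of $I$, and $f_{a}'(q_{+})\neq0$ by (A3) along the orbit $q_{+}\mapsto q_{-}$.
\item Hence a horizontal segment issuing from either face into $Q$ is mapped to a $u$-curve leaving $\StableLocMan{z_{-}}$ to the right. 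Such a curve cannot cross this $s$-curve again.
\item Away from the faces, $f_{a}\ge q_{-}+\eta$, and $C^{0}$-perturbation suffices.
\end{enumerate}
Finally, $f_{a}(c)<q_{+}$ for $a<a_{*}$ near $a_{*}$ is not contained in (A2). Your ``monotone unfolding'' is an extra hypothesis, which the paper also uses tacitly, so state it explicitly.
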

	\begin{proof}
		The properties \ref{lemma:basic-properties:prop1}-\ref{lemma:basic-properties:prop3} follow directly from the corresponding properties of one dimensional family $f_{a}$ by perturbation. The property \ref{lemma:basic-properties:prop4} follows by a perturbation from Proposition \ref{proposition:inv-manifolds-endo2} and the fact that $\StableLocMan{z_{+}}$ is vertical for $f_{a,0}$, see Remark \ref{remark:case-b-0}. The parameter $b_{0}$ is chosen so that the perturbative argument is valid. That is, for all $a\in A$ and $b\in B_{0}$ we have $f(Q)\subset Q$, since this is feasible for a one dimensional family, it is also feasible for its perturbation.
	\end{proof}
	Let us define $\epsilon=1$, if $f$ preserves the orientation and  $\epsilon=-1$ otherwise. Depending on the sign of $\epsilon$, the relative positions of invariant manifolds of $z_{-}$ and $z_{+}$ change. To cover both cases of $\epsilon\in\{-1,1\}$, we will sometimes write $z_{-\epsilon}$ or $z_{\epsilon}$, to indicate one of the fixed points. The following lemma summarizes the positions of invariant manifolds of the fixed points, and the action of $f$ restricted to those manifolds.
	\begin{lemma}\label{lemma:position-inv-manifolds-fixed-points}
		The following holds for $(a,b)\in A^{b}\times\{b\}$ and $b$ small enough, for $\epsilon\in \{-1,1\}$, \begin{enumerate}
			\item\label{lemma:position-inv-manifolds-fixed-points:1} $f|_{\StableLocMan{z_{\epsilon}}}$ and $f|_{\UnstableLocMan{z_{+}}}$ reverse orientation, and $f|_{\UnstableLocMan{z_{-}}}$, $f|_{\StableLocMan{z_{-\epsilon}}}$ preserve orientation,
			\item\label{lemma:position-inv-manifolds-fixed-points:2} 
            $\UnstableLocMan{z_{\epsilon}}\cap\Ryc $ is to the left of $\UnstableLocMan{z_{-\epsilon}}\cap \Ryc$
			\item\label{lemma:position-inv-manifolds-fixed-points:3} there are $\UnstableLocMan{z_{+}}$ and $\UnstableLocMan{z_{-}}$ intersecting each of $\Rc$, and $\StableLocMan{z_{+}}\cap (\R\times I)$ at exactly two points,
		\end{enumerate}
	\end{lemma}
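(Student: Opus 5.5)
The plan is to reduce each item to the degenerate endomorphism $f_{a,0}(x,y)=(f_{a}(x),0)$ and then pass to small $b>0$ by perturbation. Three tools carry the perturbative step: Proposition \ref{proposition:inv-manifolds-endo2}, which gives continuous dependence of local stable manifolds of hyperbolic fixed points in the $C^{1}$ topology; the cone invariance of Lemma \ref{lemma:cone-inv}; and Lemma \ref{lemma:basic-properties}. The local unstable manifolds are handled in the same way through the inverse branches. For $b>0$ they are $u$-curves, since away from $C_{\delta}$ the unstable cones are invariant and expanded. As $b\to0$ they converge to subarcs of the graph of $f_{a}$ embedded along $\Ryc$.

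For item \ref{lemma:position-inv-manifolds-fixed-points:1}, I would first compute $df$ at $z_{\pm}$. At $b=0$ the eigenvalues are $f_{a}'(q_{\pm})$ and $0$. For small $b$ they become $\lambda^{u}_{\pm}\approx f_{a}'(z_{\pm})$ and $\lambda^{s}_{\pm}=\det df_{z_{\pm}}/\lambda^{u}_{\pm}$. Since $z_{-}=q_{-}$ lies to the left of $c$, we have $f_{a}'(z_{-})>1$. Since $z_{+}$ lies to the right of $c$, we have $f_{a}'(z_{+})<-1$. Hence $f|_{\UnstableLocMan{z_{-}}}$ preserves orientation and $f|_{\UnstableLocMan{z_{+}}}$ reverses it. The sign of $\det df$ is $\epsilon$ and is constant on each half plane. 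So the sign of $\lambda^{s}_{-}$ is $\epsilon$, and the sign of $\lambda^{s}_{+}$ is $-\epsilon$. Consequently $f$ reverses orientation on $\StableLocMan{z_{\epsilon}}$ and preserves it on $\StableLocMan{z_{-\epsilon}}$. The stable manifold of a fixed point is invariant, so the orientation character of $f$ on it is the sign of the stable eigenvalue.

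For item \ref{lemma:position-inv-manifolds-fixed-points:2}, I would use item \ref{lemma:position-inv-manifolds-fixed-points:1} together with the convention that $f$ maps the left half plane below $\Ryc$ and the right half plane above it. Consider $\UnstableLocMan{z_{\pm}}$ near the turning region of $f(Q)$: these are the arcs created by the fold along $\Ryc=f(\Rc)$. The order of their intersections with $\Ryc$ is decided by which side of $\Ryc$ the stable direction of each fixed point flips to. When $\epsilon=1$, the point $z_{+}$ has a negative stable eigenvalue, so its stable direction flips at each iterate. The local geometry near $\Ryc$ then places $\UnstableLocMan{z_{+}}\cap\Ryc$ to the left, matching Figure \ref{fig:manifolds-positions-OP}; the case $\epsilon=-1$ is symmetric and matches Figure \ref{fig:manifolds-positions-OR}. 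Concretely, I would track the images of a small vertical segment through $z_{\pm}$. Its image is a thin strip of width $O(b)$ around the unstable arc, and comparing the $x$-coordinates of its crossings with $\Ryc$ gives the claimed order. This orientation bookkeeping is the step I expect to be the main obstacle, since it is easy to get a sign wrong; I would verify it on the linear BCNF model first.

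For item \ref{lemma:position-inv-manifolds-fixed-points:3}, I would use the one-dimensional picture. For $a$ near $a_{*}$ we have $f_{a}(c)$ close to $q_{+}$. At $b=0$ the unstable manifold of each fixed point, extended to its fold, is the graph over an interval covering $[c,\,f_{a}(c)]$, traversed twice. It therefore crosses $\{0\}\times\R$ at two points, once on each branch, and it crosses $\{z_{+}\}\times\R$, the limit of $\StableLocMan{z_{+}}$, at two points as well. These intersections are transversal: the unstable arcs are $u$-curves and the other curves are vertical or $s$-curves. Transversality makes them persist for small $b$ with the same count. Finally, shrinking $A^{b}$ if necessary keeps $a$ in the range where these crossings exist, and the nesting $A^{b}\nearrow(a',a_{*})$ from Lemma \ref{lemma:basic-properties} is preserved.
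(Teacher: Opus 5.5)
Item 1 is correct, and more explicit than the paper's one-line justification: the sign of $\det df$ is $\epsilon$ on both half planes, $f_a'(z_-)>1$ and $f_a'(z_+)<-1$, so the stable eigenvalue at $z_{\mp}$ has sign $\pm\epsilon$. Fix the slip $f_a'(q_\pm)$, which should read $f_a'(z_\pm)$.

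\textbf{The genuine gap is item 2: no mechanism is given.} Saying the order is ``decided by which side of $\Ryc$ the stable direction flips to'' and that ``the local geometry places'' the point to the left only restates the claim. The signs of the stable eigenvalues play no role in the order. The proposed concrete check does not help either. Forward images of a short segment through $z_\pm$ are curves, not strips, and by the inclination lemma they accumulate on $\UnstableLocMan{z_{\pm}}$ itself. Comparing their crossings with $\Ryc$ is therefore the original question, not an answer to it. Checking the BCNF model would only cover a special case.

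What is needed is the paper's topological argument. The homeomorphism $f$ sends $\Rn$ onto the region below $\Ryc$ and $\Rc$ onto $\Ryc$. Hence it carries the boundary orientation of $\Rn$ ($\Rc$ traversed upwards) to the boundary orientation of that region ($\Ryc$ traversed right to left) when $\epsilon=1$, and to the opposite orientation when $\epsilon=-1$. So for $p_1,p_2\in\Rc$ with $p_1$ above $p_2$, $f(p_1)$ is to the left of $f(p_2)$ if and only if $\epsilon=1$.

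Apply this to the points $p_\pm$ where $\UnstableLocMan{z_{\pm}}$ crosses $\Rc$; their images are the fold points $\UnstableLocMan{z_{\pm}}\cap\Ryc$. This gives item 2 once you know that $p_+$ lies above $p_-$. The paper uses that fact without comment, so you would have to supply it:
\begin{itemize}
\item The arc of $\UnstableLocMan{z_{-}}$ from $z_-$ to $p_-$ is the image of a proper subarc starting at $z_-$, because $f$ expands and preserves orientation along it. Hence $p_-\in f(\Rn)$ lies below $\Ryc$.
\item The arc of $\UnstableLocMan{z_{+}}$ from $z_+$ to $p_+$ is, by orientation reversal, the image of a shorter arc on the right of $z_+$. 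That arc lies in $\Rp$, before the fold near $x=f_a(c)$. Hence $p_+$ lies above $\Ryc$.
\item Since the $u$-curve $\Ryc$ meets $\Rc$ exactly once, $p_+$ is above $p_-$.
\end{itemize}

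For item 3 you take a different route from the paper. The paper fixes small $b$ and argues by contradiction. If $W^u(z_\pm)$ missed some $C_\delta$, Lemma \ref{lemma:cone-inv} would make it a uniformly expanded $u$-curve. But a $u$-curve in $Q$ is a graph over an interval and so has bounded length. The count of two crossings then comes from the fold.

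Your perturbation from $b=0$ shows the count and location of the crossings more transparently. However, it needs continuity of the finite pieces $f^n(\UnstableLocMan{z_{\pm}})$ up to $b=0$, which your tools do not provide. Proposition \ref{proposition:inv-manifolds-endo2} concerns stable manifolds only, and $f_{a,0}$ has rank one, so there are no inverse branches to work with. The graph transform along the constant backward orbit does give this continuity, and you should invoke it explicitly.
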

	\begin{proof}
		Claim \ref{lemma:position-inv-manifolds-fixed-points:1} and is a direct consequence of the definition of admissible families, see \cite{kucharski:str-att,strange-attractor-mis} for details in the case of piecewise affine maps. Regarding Claim \ref{lemma:position-inv-manifolds-fixed-points:2}, let us consider a disc $Q\cap \Rn$ and orientation on $F:=\fr(Q\cap \Rn)$ that agrees with the orientation on $\Rc$ induced from $\R$ using the parametrization $\Rc=\{(\psi(x),x)\colon x\in\R\}$ with some $\psi\in C^1(\R,\R)$ and $\frac{\diff}{\diff x}\psi>0$, that is parametrization that goes from down to up. Then $f$ preserves orientation of $F$ if $\epsilon=1$, and reverses otherwise. This means, that every two points $z_1,z_2\in F\cap \Rc$ with $\pi_2(z_1)>\pi_2(z_2)$, have $\epsilon\pi_2(f(z_1))<\epsilon\pi_2(f(z_2))$. Applying this to $z_1=\UnstableLocMan{z_
        +}$ and $z_2=\UnstableLocMan{z_-}$, we obtain Claim \ref{lemma:position-inv-manifolds-fixed-points:2}.

        Let us explain claim \ref{lemma:position-inv-manifolds-fixed-points:3}. Taking into account Lemma \ref{lemma:basic-properties}, it is enough to show that both $\UnstableLocMan{z_{+}}$ and $\UnstableLocMan{z_{-}}$ intersect $\Rc$. We show only the case of $z_{-}$, as the case of $z_{+}$ is analogous. Assume $\UnstableMan{z_{-}}$ does not intersect $\Rc$. Hence, there is $\delta>0$ with $\UnstableMan{z_{-}}\cap C_{\delta}=\emptyset$. Then by Lemma \ref{lemma:cone-inv}, the tangent vectors $v$ to $\UnstableMan{z_{-}}$ are contained in the unstable cones, and so have angles with $(0,1)$ uniformly bounded away from zero. This implies that the projections $\pi_{1}(v)$ on the first coordinate of $v$ are uniformly bounded away from zero and, consequently, $\UnstableMan{z_{-}}$ necessarily have a finite length. On the other hand, by Lemma \ref{lemma:cone-inv}, $\UnstableLocMan{z_{-}}$ is uniformly expanded under $f$, yielding a contradiction.
	\end{proof}
	Let us remark that $\UnstableLocMan{z_{+}}$ and $\UnstableLocMan{z_{-}}$ are folded under the action of $f$ into a $U$-shape or a $V$-shape if they cross $\Rc$, and so, if they have appropriate length, intersect any $s$-curve on $C_{\delta}\cap\Rp$ at exactly two points.
	
	Let $a_{*,b}\in A$ be defined so that at the parameter $p=(a_{*,b},b)$ the map $f_{p}$ has a heteroclinic tangency of $f^{-1}(\StableLocMan{z_{-}})\cap \Rp$ and $\UnstableLocMan{z_{+}}$ (if $f_{p}$ is orientation reversing) or a homoclinic tangency of $f^{-1}(\StableLocMan{z_{-}})\cap \Rp$ and $\UnstableLocMan{z_{-}}$ (if $f_{p}$ is orientation preserving).
	
	\begin{remark}\label{remark:endpoint-of-Ab}
		We may extend the sets $A^{b}$ so that the right endpoint of $A^{b}$ is equal to $a_{*,b}$. From now on, we will assume that this is the case.
	\end{remark}

	\section{Piecewise uniformly hyperbolic families}\label{section:piecewise-uni-hyp-families}
    In this section, we assume that the admissible family is piecewise hyperbolic and has a high expansion, that is, conditions \ref{stepA2-hyp} and \ref{stepA2-expansion} hold.
	\subsection{Chaotic properties}\label{subsection:chaotic-prop}
	In this section, we show that the stable manifold of $z_{+}$ is dense in $Q$ for some parameters near $(a_{*},0)$. The proof essentially does not differ from that presented in \cite{Lozi-likemaps,strange-attractor-mis}, although we present more details. Let $\rho$ be the coefficient of the unstable cones. In this section, we assume $(a,b)\in A^{b}\times\{b\}$, so that Lemmas \ref{lemma:basic-properties} and \ref{lemma:position-inv-manifolds-fixed-points} hold. We begin the proof by the following lemma. 
	\begin{lemma}\label{lemma:smooth-arcs-bounded}
		Any smooth $u$-curve $\gamma\subset Q$ satisfies $\length(\gamma)\leq \sqrt{1+\rho^{2}} \diam Q$.
	\end{lemma}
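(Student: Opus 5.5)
By Lemma \ref{lemma:cone->gamma-a-function}, a $u$-curve $\gamma$ is the graph of a $C^{1}$ function, $\gamma=\{(x,\phi_{u}(x))\colon x\in I_{u}\}$. Since the tangent vectors lie in the unstable $\rho$-cone, the slope satisfies $|\phi_{u}'(x)|\leq\rho$. The plan is to bound the length by an integral over the horizontal projection of $\gamma$. That projection is an interval whose length is at most the horizontal width of $Q$, and hence at most $\diam Q$.

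The first step is to reparametrize. Assume $\gamma\subset Q$ is compact, as it is for the curves parametrized by $I=[0,1]$ in the definition of $u$-curves. Then the projection $\pi_{1}(\gamma)$ is a compact interval $[x_{0},x_{1}]$, and $\gamma(x)=(x,\phi_{u}(x))$ for $x\in[x_{0},x_{1}]$. Length does not depend on the parametrization, so
\[
\length(\gamma)=\int_{x_{0}}^{x_{1}}\sqrt{1+\phi_{u}'(x)^{2}}\,dx\leq\sqrt{1+\rho^{2}}\,(x_{1}-x_{0}).
\]

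The second step is to bound $x_{1}-x_{0}$. Both endpoints $(x_{0},\phi_{u}(x_{0}))$ and $(x_{1},\phi_{u}(x_{1}))$ lie in $\gamma\subset Q$. Hence
\[
x_{1}-x_{0}\leq\bigl\|(x_{1},\phi_{u}(x_{1}))-(x_{0},\phi_{u}(x_{0}))\bigr\|\leq\diam Q,
\]
and combining this with the first display gives the claim.

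The only point that needs care is the case $I_{u}=\R$. This cannot occur for $\gamma\subset Q$: $Q$ is bounded by Lemma \ref{lemma:basic-properties}, whereas a graph over all of $\R$ is unbounded. So the graph is always over a bounded interval. If that interval is open or half-open, the same estimate holds with $\sup$ and $\inf$ of the projection in place of $x_{1}$ and $x_{0}$. For piecewise smooth $u$-curves the argument is identical, integrating over each smooth piece.

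I do not expect a genuine obstacle here. The whole argument is the slope bound coming from the cone condition together with the graph representation from Lemma \ref{lemma:cone->gamma-a-function}.
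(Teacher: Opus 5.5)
Your argument is correct and is essentially the paper's proof: write $\gamma$ as a graph over $J=\pi_{1}(\gamma)$, use the cone condition to get $\|\gamma'\|\leq\sqrt{1+\rho^{2}}$, and bound $|J|$ by $\diam Q$. You are slightly more careful than the paper, both in deriving $|J|\leq\diam Q$ from the two endpoints lying in $Q$ and in excluding $I_{u}=\R$.

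One caution about your closing aside on piecewise smooth $u$-curves: it is false in general. A curve assembled from several $u$-curve pieces can fold back, as images of $u$-curves crossing $\Rc$ do, so it need not be a graph, and with many folds its length is unbounded. This is why the paper, in Lemma~\ref{proposition:st-mani-X-intersects-arcs}, applies the bound piece by piece and multiplies by the number of pieces. The stated lemma only concerns smooth curves, so your proof itself is unaffected.
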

	\begin{proof}
		Since $\gamma$ is a $u$-curve, it can be parameterized by a segment $J:=\pi_{1}(\gamma)$, that is, one can find $\phi\colon J\to\R$ such that $\gamma(t)=(t,\phi(t))$, for $t\in J$. It follows $\gamma'(t)=(1,\phi'(t))$ for $t\in J$. Recall $\gamma'\subset  C^{u}_{\rho}$. Let us compute \begin{equation*}
			\length(\gamma)=\int_{t\in J}||\gamma'(t)||dt\leq \int_{t\in J} \sqrt{1+\rho^{2}}dt\leq |J| \sqrt{1+\rho^{2}}\leq \diam Q \sqrt{1+\rho^{2}}.\qedhere
		\end{equation*}
	\end{proof}
	\begin{lemma}\label{proposition:st-mani-X-intersects-arcs}
		The stable manifold of $z_{+}$ intersects any $u$-curve $\gamma\subset Q$.
	\end{lemma}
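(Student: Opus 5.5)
We argue by growth of $u$-curves under iteration, following the classical Misiurewicz argument for Lozi maps. Fix a $u$-curve $\gamma\subset Q$ and suppose, for contradiction, that $\gamma\cap\StableMan{z_{+}}=\emptyset$. Since $Q$ is forward invariant (Lemma \ref{lemma:basic-properties}), all iterates $f^{n}(\gamma)$ stay in $Q$. We consider the following dichotomy for a $u$-curve $\gamma'\subset Q$: either $\gamma'$ does not meet $\Rc$, or it crosses $\Rc$. In the first case $f(\gamma')$ is again a $u$-curve, by the cone invariance \eqref{lemma:cone-inv:C1} of Lemma \ref{lemma:cone-inv}. Under \ref{stepA2-hyp} this invariance holds uniformly with expansion factor at least $\lambda:=e^{c_{1}}$, where $c_1$ is close to $\tilde c_1$, so that $\lambda>\sqrt{2}$ by \ref{stepA2-expansion}; since the system is piecewise hyperbolic, there is no need to stay $\delta$-away from $\Rc$ on each smooth piece. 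In the second case we cut $\gamma'$ at $\Rc$ into two $u$-curves and retain the longer piece, which has length at least $\length(\gamma')/2$.

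Next we define the sequence $\gamma_{0}=\gamma$ and $\gamma_{n+1}=$ the longer smooth piece of $f(\gamma_n)$ (or of $f$ applied to the longer piece of $\gamma_n$ cut at $\Rc$). The naive estimate gives only $\length(\gamma_{n+1})\geq \tfrac{\lambda}{2}\length(\gamma_n)$, which need not grow. The standard fix is to consider two iterates: a short $u$-curve cannot be cut at $\Rc$ at two consecutive times. The key geometric claim is that if $\gamma_n$ crosses $\Rc$ and is short, then $f(\gamma_n)$ lies near $f(\Rc)=\Ryc\cap Q$, which is close to the point $(f_a(c),\cdot)$. That point is near $q_{+}$ for $a$ near $a_*$, hence far from $\Rc$, so $\gamma_{n+1}$ does not meet $\Rc$. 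This yields $\length(\gamma_{n+2})\geq \tfrac{\lambda^{2}}{2}\length(\gamma_n)$, with $\lambda^2/2>1$ precisely by \ref{stepA2-expansion}. Iterating, the lengths grow exponentially as long as they remain below a fixed threshold $\eta>0$, namely the distance scale beyond which the two-step argument could fail. Since Lemma \ref{lemma:smooth-arcs-bounded} bounds every smooth $u$-curve in $Q$ by $\sqrt{1+\rho^{2}}\diam Q$, we conclude that some $\gamma_{N}$ is a smooth $u$-curve of length at least $\eta$.

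It remains to show that a long $u$-curve in $Q$ must meet $\StableMan{z_+}$. We would use $\StableLocMan{z_{+}}$, an $s$-curve on $Q$ by Lemma \ref{lemma:basic-properties}\ref{lemma:basic-properties:prop4}, together with its preimage $f^{-1}(\StableLocMan{z_{+}})$. The preimage consists of $s$-curves on $Q$ on both sides of $\Rc$, close for small $b$ to the vertical lines through the preimages of the fixed point $z_+$ of $f_a$. These curves, together with $\Rc$ and the vertical faces of $Q$, partition $Q$ into vertical strips. A smooth $u$-curve avoiding them lies in one strip, and the image of that strip maps across $\StableLocMan{z_{+}}$ or across the other strips. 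If the strip widths are smaller than $\eta$, a $u$-curve of length at least $\eta$ crosses one of these $s$-curves, and we obtain a point of $\gamma_N\subset f^{N}(\gamma)$ in $\StableMan{z_+}$. Pulling back gives a point of $\gamma\cap\StableMan{z_{+}}$, which is the desired contradiction.

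The main obstacle is reconciling the threshold $\eta$ with this width bound. The two-step non-crossing argument needs curves shorter than roughly $\mathrm{dist}(f_a(c),0)$, while the strips have widths of order $|q_\pm-z_+|$-type quantities. If the strips are too wide we would refine further by using $f^{-k}(\StableLocMan{z_{+}})$ for a few more $k$ to get finer strips. The restriction to parameters near $(a_*,0)$ is used exactly here, to make these one-dimensional constants compatible.
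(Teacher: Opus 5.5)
\paragraph{Verdict: genuine gap.} Your growth mechanism is the right one: cut at $\Rc$, keep the longer piece, compare over two iterates, use \ref{stepA2-expansion}. But the proof is incomplete, and the missing step is a detour you do not need.

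\paragraph{Where it fails.} You make the two-step estimate depend on shortness: a curve of length $<\eta$ crossing $\Rc$ is mapped near $(f_a(c),0)$ and so misses $\Rc$. This forces you to prove separately that every $u$-curve of length at least $\eta$ meets $\StableMan{z_+}$, and that is not established. Your strips are partly bounded by $\Rc$ and by the vertical faces of $Q$, which lie in $\StableMan{z_-}$. So a long $u$-curve crossing a strip boundary need not meet $\StableMan{z_+}$ at all. What you would actually need is that, for some fixed $k$, the pullbacks $f^{-k}(\StableLocMan{z_+})\cap Q$ have horizontal gaps smaller than $\eta$ everywhere in $Q$. That is a quantitative form of the very density statement you are proving. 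It would need its own argument (density of the backward orbit of $z_+$ under $f_{a_*}$, then a perturbation). It would also hold only for $a$ sufficiently close to $a_*$ depending on $k$, whereas the lemma is asserted for all parameters in $A^{b}\times\{b\}$. You leave exactly this reconciliation open.

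\paragraph{The missing idea.} Replace shortness by the fold geometry of property \ref{lemma:basic-properties:prop4} in Lemma \ref{lemma:basic-properties}.
\begin{itemize}
\item If $\gamma_n$ is cut at $\Rc$, then $\gamma_{n+1}\subset f(Q)$ has an endpoint on $f(Q)\cap\Ryc$. That point lies in the component of $f(Q)\setminus\StableLocMan{z_+}$ in the right half plane, while $\Rc\cap f(Q)$ lies in the other two components.
\item Either $\gamma_{n+1}$ meets $\Rc$, in which case it must cross $\StableLocMan{z_+}$ and you are done.
\item Or $\gamma_{n+1}$ is not cut, and $\length(\gamma_{n+2})\geq \frac{e^{2c_1}}{2}\length(\gamma_n)$, with no restriction on the length of $\gamma_n$.
\end{itemize}
Unbounded growth then contradicts the uniform bound $\sqrt{1+\rho^{2}}\diam Q$ of Lemma \ref{lemma:smooth-arcs-bounded}, which you cite but never actually use.

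This is the paper's proof. There it is phrased as a count: $f^{2n}(\gamma)$ has at most $2^{n}$ smooth pieces, each of length at most $\sqrt{1+\rho^{2}}\diam Q$, while its total length is at least $e^{2nc_1}\length(\gamma)$. It needs neither the threshold $\eta$, nor $f_a(c)$ close to $q_+$, nor any statement about long curves.
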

	\begin{proof}Let $\gamma\subset Q$ be a smooth $u$-curve. By condition \ref{lemma:basic-properties:prop4} of Lemma \ref{lemma:basic-properties} it suffices to prove that $f^{n}(\gamma)$ intersects both $\Rc$ and $\Ryc$ simultaneously for $n\in\N$ large enough, since then it connects the components of $f(Q)\setminus \StableLocMan{z_{+}}$ and therefore must intersect $\StableLocMan{z_{+}}$. Assume $f^{n}(\gamma)$ never intersects $\Rc$ and $\Ryc$ at the same time. The iteration $f^{2n}(\gamma)$ will consist of at most $2^{n}$ smooth curves, each with tangent vectors contained in $C^{u}$. Hence, by Lemma \ref{lemma:smooth-arcs-bounded}, $\length(f^{2n}(\gamma))\leq 2^{n}\sqrt{1+\rho^{2}}\diam Q$. On the other hand, since $\gamma'$ is contained in $C^{u}$, we get $\length(f^{2n}(\gamma))\geq e^{2nc_{1}}\length(\gamma)$, which gives us \[
		e^{2nc_{1}}\length(\gamma)\leq	\length(f^{2n}(\gamma))\leq2^{n}\sqrt{1+\rho^{2}}\diam Q.
		\]Consequently, \[
		e^{c_{1}}\leq	\sqrt{2}\left[\length(\gamma)^{-1}\sqrt{1+\rho^{2}}\diam Q\right]^{1/(2n)},
		\]which contradicts condition \ref{stepA2-expansion} for small perturbations.
	\end{proof}
	Let us recall that in the case of non-uniformly hyperbolic H\'enon family, it was proved by Benedicks and Viana that the basin of attraction of the strange attractor found by Benedicks and Carleson \cite{henon-dynamics-of} contains a neighbourhood of the attractor. On the other hand, in the case of piecewise uniform hyperbolicity, we can prove that in a neighbourhood of the attractor, the stable manifold of the fixed point $z_+$ is dense. This fact follows from high uniform expansion and uniform separation of the angles of stable and unstable local manifolds. Moreover, this fact was the key component of the proof that the map in consideration restricted to the closure of the unstable manifold of $z_+$ is mixing; see \cite{simpson:robust-chaos, kucharski:str-att,Lozi-likemaps, strange-attractor-mis}. Following the same line of reasoning as in \cite{simpson:robust-chaos, kucharski:str-att,Lozi-likemaps, strange-attractor-mis} we prove the following. We begin with the lemma showing density of local unstable manifolds, a proof for Lozi family can be found in \cite{strange-attractor-mis}. A proof in the case of admissible families with piecewise uniform hyperbolicity is the same, and so we leave it to the reader.
	\begin{lemma}\label{lemma:density-of-unst-mani}
	    For Lebesgue almost every point $z\in Q$  the unstable manifold $\UnstableLocMan{z}$ exists and is a $u$-curve. In particular, the set $\{z\in Q\colon \UnstableLocMan{z}\text{ exists and is a }u\text{-curve}\}$ is dense in $Q$.
	\end{lemma}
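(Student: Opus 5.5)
The idea is to construct unstable manifolds for almost every point as limits of backward images of unstable cones. This mirrors Misiurewicz's argument for the Lozi family \cite{strange-attractor-mis}, with the piecewise affine structure replaced by the uniform cone estimates of Lemma \ref{lemma:cone-inv} under \ref{stepA2-hyp}.

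\textbf{Step 1: points whose backward orbit approaches $\Rc$ slowly.} Let $\lambda>1$ be a uniform expansion rate for $u$-curves away from the singular line; by \ref{stepA2-hyp} this holds even near $\Rc$, excluding only $\Rc$ itself. For $\eta>0$ small (to be fixed) I define
\[G:=\{z\in Q\colon d(f^{-n}(z),\Rc)\ge \eta\lambda^{-n/2}\text{ for all } n\ge 0\}.\]
Two facts are needed here. First, $f^{-n}(z)$ is well defined for $z\in Q$ with $f^{-n}(z)\in Q$, and I work inside $\bigcap_n f^n(Q)$; the points outside it are handled trivially, or through the attractor's trapping region. Second, $f$ is a homeomorphism that is a diffeomorphism off $\Rc$, and its Jacobian is bounded above and below on compact sets by \ref{stepB1}--\ref{stepB2}. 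In fact $|\det df|\le Cb$, so $f^{-1}$ expands area by at most $K:=\sup|\det df^{-1}|$.

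\textbf{Step 2: measure estimate.} The set of $z$ with $d(f^{-n}(z),\Rc)<\eta\lambda^{-n/2}$ is $f^n(C_{\eta\lambda^{-n/2}}\cap Q)$. Its measure is at most $(\sup|\det df|)^n\cdot 2\eta\lambda^{-n/2}\diam Q$, and this is summable because $\sup|\det df|\le 1$ for $b$ small. Hence $\operatorname{Leb}(Q\setminus G)\le C\eta$. Letting $\eta\to0$ shows that $\bigcup_\eta G_\eta$ has full measure; by Borel--Cantelli applied in the same way, so does the set of points with $d(f^{-n}(z),\Rc)\ge\eta_z\lambda^{-n/2}$ for all $n$.

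\textbf{Step 3: construction of $\UnstableLocMan{z}$ for $z\in G$.} Around each $f^{-n}(z)$ I take a horizontal $u$-segment of length $r_n:=\eta\lambda^{-n/2}$, so that it avoids $\Rc$. Its forward image under $f^{n}$ is a $u$-curve through $z$, by the cone invariance in \eqref{lemma:cone-inv:C1}, and its length is at least $\lambda^{n}r_n=\eta\lambda^{n/2}$, truncated to a fixed length $\eta$. Backward contraction along these curves is exponential, and $df^{-1}$ expands the stable cones by \eqref{lemma:cone-inv:C2}. Together these make the curves obtained for successive $n$ graphs over a common interval that are $C^1$-Cauchy, by the standard graph transform and Hadamard--Perron argument. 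The limit is a $u$-curve of length about $\eta$ through $z$ whose backward iterates converge exponentially to those of $z$ and stay in the regular region. This is $\UnstableLocMan{z}$, identified via the definition of the local unstable manifold. Density in $Q$ then follows, since a full-measure set is dense.

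\textbf{Main obstacle.} The hardest step is Step 3 near $\Rc$. Lemma \ref{lemma:cone-inv} gives constants that depend on $\delta$ (through $\tilde\theta(\delta)$ and $S(\delta)$), so its estimates degenerate as backward orbits approach $\Rc$. My plan is to use \ref{stepA2-hyp}, which gives expansion $e^{\tilde c_1 n}$ independent of $\delta$, to obtain $\delta$-independent cone invariance for $f_{a,0}$ off $\Rc$, and then to perturb. The difficulty is that the $C^1$-closeness must hold uniformly up to $\Rc$. This is available because $g_{a,b}$ is globally $C^1$-small by \ref{stepB2}, while $f_{a}$ is only piecewise $C^1$, which matches exactly the piecewise smooth setting treated in \cite{strange-attractor-mis}.
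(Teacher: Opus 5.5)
You follow the route the paper points to (it only cites Misiurewicz's Lozi argument and leaves the details to the reader). As written, however, the proposal has a genuine gap in Steps 1--2, and it sits exactly where the measure is.

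\textbf{The main gap.} You identify $\{z\in Q\colon d(f^{-n}(z),\Rc)<r_n\}$ with $f^n(C_{r_n}\cap Q)$, but that only accounts for those $z$ with $f^{-n}(z)\in Q$. In general the set is $Q\cap f^n(C_{r_n})=f^n(C_{r_n}\cap f^{-n}(Q))$. The set $C_{r_n}\cap f^{-n}(Q)$ is not contained in $Q$: since $\pi_2\circ f=\pi_2\circ g_{a,b}=O(b)$ on compact sets, already $f^{-1}(Q)$ contains points far above and below $Q$. So the area bound $2r_n\diam Q$ is unjustified.

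Restricting to $\bigcap_n f^n(Q)$ does not rescue this. By your own estimate $|\det df|\le Cb<1$ on $Q$, and $f(Q)\subset Q$, so $\Leb(f^n(Q))\le (Cb)^n\Leb(Q)\to 0$. Hence $\bigcap_n f^n(Q)$ is Lebesgue-null, and since it lies in the thin set $f(Q)$ it is not even dense in $Q$. Consequently almost every $z\in Q$ has a backward orbit that leaves $Q$, and nothing prevents it from leaving the compact set $K$ on which \ref{stepB2} and Lemma~\ref{lemma:cone-inv} give you cone invariance, expansion and the Jacobian bound (with $b$ small depending on $K$).

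These points carry the whole content of the statement, both the almost-everywhere claim and the density claim. For them, your argument controls neither how fast $f^{-n}(z)$ approaches $\Rc$ nor the hyperbolicity along the part of the backward orbit outside $K$. So neither the Borel--Cantelli step nor the graph transform can be run, and ``handled trivially, or through the attractor's trapping region'' is precisely the missing argument. Closing it requires global information on $f^{-1}$ off $Q$. For globally piecewise affine maps such as Lozi or BCNF the cone field is invariant on all of $\plane$, but one then still has to bound $\Leb(Q\cap f^n(C_{r_n}))$ by following backward orbits outside $Q$. For a general admissible family, \ref{stepB1}--\ref{stepB2} do not supply this information.

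\textbf{A second, fixable problem in Step 3.} The segment of length $r_n=\eta\lambda^{-n/2}$ at $f^{-n}(z)$ has $k$-th image of length at least $\lambda^k r_n=\eta\lambda^{k-n/2}$. Your condition only guarantees that $f^{-(n-k)}(z)$ is at distance $r_{n-k}=\eta\lambda^{-(n-k)/2}$ from $\Rc$. The ratio is of order $\lambda^{k/2}$, so for larger $k$ nothing prevents an intermediate image from crossing $\Rc$ and folding, and then $f^n$ of the segment is not a $u$-curve. You must truncate at every step to the sub-arc of size about $r_{n-k}$ around $f^{-(n-k)}(z)$, not just once at the end. This is consistent, because the images grow by a factor of at least $\lambda$ per step while the allowed size grows only by $\lambda^{1/2}$.

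Your `main obstacle' is milder than you fear. Take \ref{stepA2-hyp} with $n=1$, together with continuity of $f_a'$ on $\R\setminus\{c\}$ and density of the points whose orbit avoids $c$. This gives $|f_a'|\ge e^{\tilde c_1}$ on $\R\setminus\{c\}$, so one-step cone invariance and expansion of $u$-vectors hold uniformly up to $\Rc$ on compact sets.
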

    
    \begin{corollary}\label{corollary:stmani-dense-in-R}
		The stable manifold $\StableMan{z_{+}}$ is dense in $Q$. Moreover, $\StableMan{z_{+}}$ intersects every arc of any unstable manifold in $Q$, and $f|_{\cl\UnstableMan{z_{+}}}$ is mixing.
	\end{corollary}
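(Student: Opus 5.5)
The corollary has three claims, and I would prove them in order: density of $\StableMan{z_{+}}$ in $Q$, that it meets every unstable arc, and then mixing on $\cl\UnstableMan{z_{+}}$. The tools are Lemma \ref{lemma:density-of-unst-mani} and Lemma \ref{proposition:st-mani-X-intersects-arcs}, followed by the standard Misiurewicz-type argument.

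\textbf{Density.} Fix an open set $W\subset Q$. By Lemma \ref{lemma:density-of-unst-mani} there is $z\in W$ whose local unstable manifold $\UnstableLocMan{z}$ exists and is a $u$-curve. Shrinking it, we get a nondegenerate $u$-curve $\gamma\subset W\cap Q$. Lemma \ref{proposition:st-mani-X-intersects-arcs} then says $\StableMan{z_{+}}\cap\gamma\neq\emptyset$, so $\StableMan{z_{+}}\cap W\neq\emptyset$.

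\textbf{Meeting every unstable arc.} Any arc $J$ of an unstable manifold lying in $Q$ whose tangent vectors are in the unstable cones is itself a $u$-curve. Lemma \ref{proposition:st-mani-X-intersects-arcs} therefore applies to $J$ directly. An arc that passes through $C_{\delta}$ may be piecewise smooth; in that case it contains a smooth $u$-subarc, because folding happens only on $\Rc$ and the pieces are $u$-curves by the cone invariance of Lemma \ref{lemma:cone-inv}, and we apply the lemma to that subarc.

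\textbf{Mixing.} Let $U,V$ be open sets meeting $\Lambda:=\cl\UnstableMan{z_{+}}$.
\begin{enumerate}
\item Since $\UnstableMan{z_{+}}$ is dense in $\Lambda$, $U$ contains a small arc $\gamma$ of $\UnstableMan{z_{+}}$. By the second claim, $\gamma$ meets $\StableMan{z_{+}}$ at some point $p$, so $f^{n}(p)\to z_{+}$.
\item $\gamma$ is transverse to the almost vertical stable leaf through $p$. By the inclination ($\lambda$-)lemma, the iterates $f^{n}(\gamma)$ therefore contain arcs $C^{1}$-accumulating on $\UnstableLocMan{z_{+}}$, for \emph{every} large $n$, not only along a subsequence. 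This is where we use that $z_{+}$ is a fixed point.
\item Since $V$ meets $\Lambda$, it contains a point $w\in\UnstableMan{z_{+}}$, say $w=f^{m}(y)$ with $y\in\UnstableLocMan{z_{+}}$.
\item Continuity of $f^{m}$ near $y$, combined with step 2, gives $f^{n+m}(\gamma)\cap V\neq\emptyset$ for all $n$ large.
\end{enumerate}
This is exactly mixing of $f|_{\Lambda}$. Transitivity, and hence the dense orbit needed for the attractor definitions, follows as well.

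\textbf{Main obstacle.} The delicate point is step 2 in the non-smooth setting. The iterates $f^{n}(\gamma)$ may be folded at $\Rc$, and $f$ is only a diffeomorphism off $\Rc$. I would handle this by taking $p$'s forward orbit together with a small neighbourhood of it. Because $f^{n}(p)\to z_{+}$, which lies away from $\Rc$, only finitely many initial iterates of that neighbourhood can meet $\Rc$. After shrinking $\gamma$ around $p$, all further iterates stay in the smooth region where the cone estimates of Lemma \ref{lemma:cone-inv} hold, and the classical $\lambda$-lemma argument applies there. A related subtlety: if $p$ is an endpoint of the smooth piece, we first shrink to a subarc having $p$ in its interior, which is possible because Lemma \ref{proposition:st-mani-X-intersects-arcs} yields intersections with arbitrarily small subarcs.
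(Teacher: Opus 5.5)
Your density argument and your treatment of unstable arcs are the paper's (Lemma \ref{lemma:density-of-unst-mani} combined with Lemma \ref{proposition:st-mani-X-intersects-arcs}). Your mixing argument takes a genuinely different route.

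The paper works on the stable side. It takes a stable curve in $V$ and applies the inclination lemma to $f^{-1}$, getting $s$-curves in $f^{-k}(V)$ that converge to $\StableLocMan{z_{+}}$ for all $k\geq k_{1}$. It then reruns the expansion/length count of Lemma \ref{proposition:st-mani-X-intersects-arcs} to find $n_{1}$ such that a smooth piece of $f^{n_{1}}(\gamma)$ joins $\Rc$ and $\Ryc$. By Lemma \ref{lemma:basic-properties}, such a piece crosses every $s$-curve near $\StableLocMan{z_{+}}$, so $f^{n}(\gamma)\cap V\neq\emptyset$ for $n\geq n_{1}+k_{1}$. That version uses the fold geometry of the model and separates the dependence on $V$ (through $k_{1}$) from the dependence on $U$ (through $n_{1}$).

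Yours is the classical homoclinic-point argument: one transverse homoclinic point on $\gamma$, the forward $\lambda$-lemma at the fixed point, and continuity of $f^{m}$. It needs only the second claim, not the density of $\StableMan{z_{+}}$ in $Q$ or the global crossing structure. It also uses $V\cap\UnstableMan{z_{+}}\neq\emptyset$ explicitly. In the paper's version that hypothesis enters only implicitly: the stable curve chosen in $V$ must meet $\UnstableMan{z_{+}}$ for the backward inclination lemma to push it onto $\StableLocMan{z_{+}}$.

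One step needs tightening. Shrinking $\gamma$ around $p$ controls nearby points, but not $p$ itself. Suppose $f^{j}(p)\in\Rc$ for some $j$. Then $f^{j+1}(\gamma)$ is folded at $f^{j+1}(p)$ and lies on one side of the stable curve there. The $\lambda$-lemma then gives accumulation only on one half of $\UnstableLocMan{z_{+}}$. Since $f|_{\UnstableLocMan{z_{+}}}$ reverses orientation (Lemma \ref{lemma:position-inv-manifolds-fixed-points}), that half alternates with $n$, so with a fixed $y$ you would only get one parity of $n$, not ``every large $n$''.

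The fix is to use an intersection point where the iterated curve crosses $\StableLocMan{z_{+}}$ on both sides. The proof of Lemma \ref{proposition:st-mani-X-intersects-arcs}, applied to any subarc, produces exactly such a point. It lies on $\StableLocMan{z_{+}}$ in the interior of a smooth $u$-piece of some $f^{n}(\gamma)$ joining $\Rc$ and $\Ryc$, and its preimage in $\gamma$ never visits $\Rc$. Apply the $\lambda$-lemma to a small disc of that piece around it. Alternatively, keep any $p$ and recover all large times via $w=f^{m}(y)=f^{m+1}(f^{-1}(y))$. With either adjustment your argument goes through.
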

	\begin{proof}
		Let us sketch a proof. First, note that the piecewise uniform hyperbolicity implies the existence of invariant manifolds almost everywhere in $Q$, for the proof, we refer the reader to \cite{strange-attractor-mis}. Assume $U\subset Q$ is open. Find $z\in U$ with such that $\UnstableLocMan{z}\subset U$ is a $u$-curve by Lemma \ref{lemma:density-of-unst-mani}. Then by Lemma \ref{proposition:st-mani-X-intersects-arcs} $\StableMan{z_{+}}\cap \UnstableLocMan{z}\subset U$ is nonempty, and so $\StableMan{z_{+}}$ is dense in $Q$. We now show mixing. The proof can be found in \cite{kucharski:str-att}, and we enclose it for completeness.
		
		Let $U$, $V$ be open sets in $Q$, intersecting $\UnstableMan{z_{+}}$.
		\begin{claim*}
			There exists $k_{1}\in \N$ such that $f^{-k}(V)$, for $k>k_{1}$, intersects $f(Q)$ in such a way that every $u$-curve in $f(Q)$ intersecting both $\Rc$ and $\Ryc$ also intersects $f^{-k}(V)$.
		\end{claim*}
		\begin{proof}
			Note that every $u$-curve in $f(Q)$ that intersects both $\Rc$ and $\Ryc$, must intersect $\StableLocMan{z_{+}}$ by \ref{lemma:basic-properties:prop4} of Lemma \ref{lemma:basic-properties}. Such a $u$-curve will also intersect any sufficiently long $s$-curve contained in some small neighborhood of $\StableLocMan{z_{+}}$. Hence, it suffices to prove that $f^{-k}(V)$ contains such a $s$-curve for $k>k_{1}$ and $k_{1}\in \N$ large enough. This, on the other hand, follows directly from the density of $\StableMan{z_{+}}$. Indeed, one can find $z\in V\cap \StableMan{z_{+}}$ with $\StableLocMan{z}\subset V$. Then $f^{-k}(\StableLocMan{z})$ contains the desired $s$-curve for $k$ big enough. Indeed, in the neighborhood of $\StableLocMan{z_{+}}$ the map $f$ is hyperbolic, so one uses the inclination lemma (see, for example, \cite{katok_Hasselblatt_1995}) to deduce that there are $s$-curves $\gamma_{k}$ contained in $f^{-k}(\StableLocMan{z})$ converging to $\StableLocMan{z_{+}}$, as $k\to \infty$. 
		\end{proof}
		Since $U\cap \UnstableMan{z_{+}}$ is non-empty, there exists a $u$-curve $\gamma\subset U\cap \UnstableMan{z_{+}}$. Using an argument similar to the one in Lemma \ref{proposition:st-mani-X-intersects-arcs}, there exists $n_{1}\geq 0$ such that some curve of $f^{n_{1}}(\gamma)$ intersects both $\Rc$ and $\Ryc$. Therefore, it also intersects $f^{-k}(V)$ for all $k\geq k_{1}$ by the Claim above. Hence, $f^n(U)\cap V \cap \UnstableMan{z_{+}}\supset f^{n-n_{1}}(f^{n_{1}}(\gamma)\cap f^{n_{1}-n}(V))\cap \UnstableMan{z_{+}}\neq\emptyset$ for all $n\geq n_{1}+k_{1}$.
	\end{proof}
	\subsection{Examples: BCNF}\label{subsection:c1-perturbations}
	In this section, we show that the family of strongly dissipative border-collision normal forms (BCNF for short) parameterized by a surface can be regarded as an admissible family. Thus, the results of this paper hold for strongly dissipative BCNF.
	
	An example of Lozi-like map that can be found in \cite{Lozi-likemaps} is that of a BCNF, and so in this particular case, a Lozi-like family contains some admissible families by Lemma \ref{lemma:bcnf-are-admissible-families}. Interestingly,  Misiurewicz and \v{S}timac provided numerical evidence that the symbolic dynamics of the said example does not appear in the Lozi family.
	
	The family of border collision normal forms is a generalization of the Lozi family, which is given by \[L_{a,b}(x,y)=(1-a|x|+y,bx),\] for $(a,b)\in\plane$. Note that some authors use different equivalent definitions, giving maps conjugate to $L_{(a,b)}$, see \cite{yutaka-ishii1997:kneading1, dyi-shing-ou:critical-points-I}. Naturally, the Lozi family is isomorphic (in the sense of Lemma \ref{lemma:bcnf-are-admissible-families}) to an admissible family for an open set of parameters near $(0,2)$; see Lemma \ref{corollary:lozi-family-admissible}.

	On the other hand, the border-collision normal forms are given by \begin{equation*}
		\bcol_{\xi}\colon(x,y)\mapsto\bcol_{\xi}(x,y):=\left\{ \begin{array}{ll}
			(\tau_{-}x+y+1,\delta_{-}x) & \textrm{ for } (x,y)\in \Rn,\\
			(\tau_{+}x+y+1,\delta_{+}x) & \textrm{ for }(x,y)\in\Rp,
		\end{array} \right.
	\end{equation*}where $\xi=(\tau_{-},\tau_{+},\delta_{-},\delta_{+})\in\R^{4}$ and $\Rn=\{(x,y)\colon x\leq 0\}$, $\tau_{+}<0<\tau_{-}$, $\Rp=\{(x,y)\colon x\geq 0\}$.  The existence of a trapping region, an expanding cone family, and chaotic properties in an open parameter region is shown in \cite{simpson-gosh:robust-chaos,simpson:robust-chaos}. We note that these results treat parameters for which the maps are relatively far from the one dimensional families. In particular, one cannot hope to apply the results of this paper to the whole parameter regions found in \cite{simpson-gosh:robust-chaos,simpson:robust-chaos}. However, many smooth parameterizations $(a,b)\mapsto (\tau_{-}(a),\tau_{+}(a),\delta(b),\delta(b))$ will give us an admissible family. 
	
	In \cite{simpson:robust-chaos} Glendinning and Simpson show that BCNF possesses a chaotic attractor, that is, an attractor on which the map is mixing, for some open parameter region in $\R^{4}$ on which BCNFs are orientation preserving. The authors do not show that the attractor is maximal. In fact, they claim there is numerical evidence for the lack of maximality. Let $\R_{\geq 0}=[0,\infty]$.
	\begin{lemma}\label{lemma:bcnf-are-admissible-families}
		Let \[\xi\colon \R_{\geq 0}^{2}\to\R^{4},\quad\xi(a,b)= (\tau_{-}(a),\tau_{+}(a),\delta(b),\delta(b)),\]  be a $C^{\infty}$ smooth function on a neighbourhood of $\R_{\geq 0}\times\{0\}$ satisfying $b^{-1/k}\delta(b)\to 0$, as $b\to 0$, $b^{-1/k}\delta(b)$ is $C^2$ for $b\geq 0$ and some $k\in\N$, and $\tau_+(a_*)\left(1-\tau_-(a_*)\right)=\tau_-(a_*)$ for some $a_{*}\in\R_{\geq 0}$ with $|\tau_\pm(a_*)|>\sqrt{2}$ and $\tau_{+}(a_*)<0<\tau_{-}(a_*)$. Then there exist an open region $P\subset \R_{\geq 0}^{2}$ with closure intersecting $\R_{\geq 0}\times\{0\}$, and a $C^{\infty}$ diffeomorphism $h\colon \plane\to\plane$, such that $\{h^{-1}\circ B_{\xi(a,b)}\circ h\}_{(a,b)\in P}$ is an admissible family.
	\end{lemma}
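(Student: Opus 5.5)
Since the BCNF acts as $(x,y)\mapsto(\tau_\pm x+y+1,\delta x)$, I will first conjugate by a linear map that moves the $y$-dependence of the first coordinate into the perturbation. The natural choice is $h(x,y)=(x,y/\delta)$, or more robustly $h(x,y)=(x,\lambda y)$ with a fixed scaling. Writing the map in coordinates $u=x$, $v=y$ gives $(u,v)\mapsto(\tau_\pm u+1+v,\delta u)$. The term $+v$ in the first coordinate is not small, so I instead take the change of variables $w=x+y/\tau$-type shear, or simply the rescaled coordinates $(x, y)\mapsto (x,\delta^{-1}y)$ composed back. The cleanest route is to note that with $\tilde y=y$ we have $y_{n+1}=\delta x_n$, so $y$ is always $O(\delta)$ on bounded sets after one iterate.

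Hence I will define the conjugated map $F_{a,b}(x,y)=(\tau_\pm(a)x+1+\delta(b)\,y,\ \delta(b)x)$, obtained via $h(x,y)=(x,\delta y)$ formally. Since $h$ must be independent of parameters, I will instead absorb the factor: $F_{a,b}=(f_a(x)+\delta(b) y,\ \delta(b) x)$ with $f_a(x)=1+\tau_\pm(a)x$ is conjugate to $B_{\xi}$ by $h(x,y)=(x,y)$ up to the relabelling $y\mapsto \delta y$. This relabelling is parameter dependent, so I will also reparametrize $b$ so that $\delta(b)$ is replaced by a function with the required $C^1$ regularity. This is where the hypothesis that $b^{-1/k}\delta(b)$ is $C^2$ and tends to $0$ enters. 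After the reparametrization, $g_{a,b}=(\delta y,\delta x)$ is $C^1$ in $(a,b,z)$ and satisfies $\|g\|_{C^1(K)}\le C b$, giving \ref{stepB1}--\ref{stepB2}, while \ref{stepB11} holds because $g_{a,0}=0$.

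Next comes Step A. The map $f_a(x)=1+\tau_\pm(a)x$ is a tent map with its maximum at $c=0$, because $\tau_+<0<\tau_-$. I will translate and rescale $x$ by an affine $h$ so that $q_-$ is the left fixed point $1/(1-\tau_-)$ (negative, since $\tau_->1$) and $I=[q_-,q_+]$ with $q_+$ the other preimage of $q_-$. The condition $f_{a_*}(c)=1=q_+$ reduces, after solving $1+\tau_+q_+=q_-$, to $\tau_+(1-\tau_-)=\tau_-$, which is exactly the hypothesis; this gives \ref{stepA3}, provided $\frac{d}{da}$ of the defect is nonzero, which I take from an open choice of $P$. Because $|\tau_\pm|>\sqrt2>1$ near $a_*$, we have $|df_a^n|\ge e^{\tilde c_1 n}$ with $e^{\tilde c_1}>\sqrt2$, which yields \ref{stepA2}, \ref{stepA2-hyp} and \ref{stepA2-expansion}. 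Regularity \ref{stepA1} holds off $c$ since $\tau_\pm$ are $C^\infty$. Finally, I will take $P$ to be a small neighbourhood of $(a_*,0)$ intersected with $\{b>0\}$, and restrict $A$ so that the unimodality and invariance of $I$ persist.

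The main obstacle is making the conjugacy $h$ parameter independent while the off-diagonal term $y$ in the first coordinate is of order one. I expect to handle it by conjugating with $h(x,y)=(x,\kappa y)$ for a fixed $\kappa$ and then redefining the parameter $b$ through $\delta$, using the regularity hypothesis on $b^{-1/k}\delta(b)$ to control the $C^1$ dependence on $b$ at $b=0$.
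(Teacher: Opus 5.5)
Your Step A discussion agrees with the paper: the tent map $r_\tau(x)=\tau_\pm x+1$, the points $q_-=1/(1-\tau_-)$ and $q_+=\tau_-/(\tau_+(1-\tau_-))$, the equivalence of $r_\tau(0)=q_+$ with $\tau_+(1-\tau_-)=\tau_-$, and expansion from $|\tau_\pm|>\sqrt2$. The Step B part, which is the real content of the lemma, does not work.

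First, the computation is wrong. Conjugating by $h(x,y)=(x,\delta y)$ gives $h^{-1}\circ B_\xi\circ h(x,y)=(\tau_\pm x+\delta y+1,\,x)$, not $(\tau_\pm x+1+\delta y,\,\delta x)$. A rescaling of $y$ multiplies one off-diagonal entry of the linear part by some $s$ and divides the other by $s$, so their product stays $\delta$. Equivalently, any $C^1$ conjugacy preserves the Jacobian $-\delta$ at corresponding fixed points, whereas your $F_{a,b}$ has Jacobian $-\delta^2$. So $F_{a,b}$ is not conjugate to $B_{\xi(a,b)}$ at all, and your bound $\|g\|_{C^1}\le Cb$ for $g=(\delta y,\delta x)$ says nothing about the BCNF.

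Second, your plan to keep $h$ independent of the parameters cannot be repaired. If $h$ is fixed, then $h^{-1}\circ B_{\xi(a,b)}\circ h\to h^{-1}\circ B_{\xi(a,0)}\circ h$ as $b\to0$. Conditions \ref{stepB11} and \ref{stepB2} would force this limit to equal $(f_a(x),0)$. But the nonempty fibres of $B_{\xi(a,0)}(x,y)=(\tau_\pm x+y+1,0)$ are the connected broken lines $\{\tau_\pm x+y=\mathrm{const}\}$. The fibres of $(f_a(x),0)$ with $f_a$ unimodal are pairs of vertical lines, and a conjugacy carries fibres to fibres. Concretely, with $h(x,y)=(x,\kappa y)$ for fixed $\kappa$ you get $g_{a,b}=(\kappa y,\kappa^{-1}\delta(b)x)$, so $g_{a,0}=(\kappa y,0)\neq0$. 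Reparametrizing $b$ does not help, because the order-one coefficient of $y$ in $B_\xi$ does not depend on $b$.

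The missing idea is that the conjugacy must depend on $b$ and degenerate as $b\to0$. It should split $\delta$ between the two off-diagonal entries so that both tend to zero. The paper takes $h(x,y)=(x,b^{1/k}y)$, so the $h$ in the statement is really a $b$-dependent family defined for $b>0$. This gives
\[
h^{-1}\circ B_{\xi}\circ h=(r_\tau(x),0)+g_{a,b},\qquad g_{a,b}(x,y)=\bigl(b^{1/k}y,\ b^{-1/k}\delta(b)\,x\bigr).
\]
This is exactly where the hypotheses that $b^{-1/k}\delta(b)\to0$ and is $C^2$ enter: they make the second component small and regular, with $g_{a,0}=0$ by continuity.

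Your instinct to reparametrize $b$ does have a legitimate role, but only for the first component. When $k\geq2$, the term $b^{1/k}y$ is only $O(b^{1/k})$ and is not $C^1$ in $b$ at $0$. To meet \ref{stepB1} and \ref{stepB2} literally, take $b^{1/k}$ as the new parameter, or take $k=1$. In the new parameter $\beta=b^{1/k}$, the second coefficient $\eta(\beta^k)$, with $\eta(b)=b^{-1/k}\delta(b)$, is $O(\beta^k)$ and $C^2$.
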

	\begin{proof}
		Recall $\tau_{+}<0<\tau_{-}$. Consider $|\tau_-|,|\tau_+|>1$. Let us define $r_{\tau}\colon \R\to\R$ by $r_{\tau}(x):=\tau_{\sigma}x+1$ for $x\sigma>0$ and $\sigma=\pm 1$. Note that $z_{\sigma}:=1/(1-\tau_{\sigma})$, $q_-:=z_{-}<0<z_{+}$ for $\sigma=\pm1$ are fixed points of $r_{\tau}$. Moreover, denoting $q_{+}\in r_{\tau}^{-1}(z_{-})$, with $q_{+}\neq q_{-}$, we have $q_{+}>0$, and so $[q_{-},q_{+}]$ is $r_{\tau}$-invariant and contains the critical point $0$. Note that $q_+=\frac{\tau_-}{\tau_+(1-\tau_-)}$. The condition for $r_{\tau}$ to be surjective is that $1=r_{\tau}(0)=q_+$, that is, $\tau_+(1-\tau_-)=\tau_-$. By assumption, this is satisfied at some $a_\ast$. Consequently, using parameterization $a\mapsto (\tau_{-}(a),\tau_{+}(a))$, map $r_{\tau}$ is a unimodal map from Step A. Put $h(x,y):=(x,b^{1/k}y)$. Denote $g_{a,b}(x,y)=g(x,y)=(g_{1}(x,y),g_{2}(x,y))=(b^{1/k}y,b^{-1/k}\delta(b)x)$. We can then compute for $b>0$\begin{equation*}
			\tilde{B}_{\xi}:=h^{-1}\circ B_{\xi}\circ h(x,y)=(r_{\tau}(x),0)+g_{a,b}(x,y).
		\end{equation*} We see that $\tilde{B}_{\xi}(x,y)$ for $|\tau_{\sigma}|>\sqrt{2}$ satisfies Step A, condition of piecewise uniform hyperbolicity \ref{stepA2-hyp}, and of high expansion \ref{stepA2-expansion}. Finally, note that $g_{a,b}(x,y)|_{K}$ is $C^{2}$ for every compact subset $K\subset \plane$, so it satisfies Step B.
	\end{proof}
	As an immediate consequence of the above, we obtain the following.
	\begin{lemma}\label{corollary:lozi-family-admissible}
		There exists an open region $P\subset \R_{\geq 0}^2$ with $(2,0)\in \cl P$ and a $C^{\infty}$ diffeomorphism $h\colon \plane\to\plane$, such that $\{h^{-1}\circ L_{a,b}\circ h\}_{(a,b)\in P}$ is an admissible family.
	\end{lemma}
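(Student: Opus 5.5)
The Lozi family is the border-collision normal form with equal determinants and symmetric traces, so I plan to deduce the statement from Lemma \ref{lemma:bcnf-are-admissible-families}. The first step is to write $L_{a,b}$ in the form $\bcol_{\xi}$. On $\Rn$ we have $x\le 0$ and $|x|=-x$, so $L_{a,b}(x,y)=(ax+y+1,bx)$. On $\Rp$ we get $L_{a,b}(x,y)=(-ax+y+1,bx)$. Hence $L_{a,b}=\bcol_{\xi(a,b)}$ with
\[
\xi(a,b)=(\tau_{-}(a),\tau_{+}(a),\delta(b),\delta(b)),\qquad \tau_{-}(a)=a,\quad \tau_{+}(a)=-a,\quad \delta(b)=b .
\]
Both $\tau_{\pm}$ and $\delta$ are polynomial, so $\xi$ is $C^{\infty}$ on a neighbourhood of $\R_{\geq 0}\times\{0\}$.

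Next I will verify the remaining hypotheses of Lemma \ref{lemma:bcnf-are-admissible-families}, taking $k=2$.
\begin{itemize}
\item The function $b^{-1/2}\delta(b)=b^{1/2}$ tends to $0$ as $b\to 0$.
\item The tangency condition $\tau_{+}(a_{*})(1-\tau_{-}(a_{*}))=\tau_{-}(a_{*})$ reads $-a_{*}(1-a_{*})=a_{*}$, that is $a_{*}^{2}=2a_{*}$. This gives $a_{*}=2$, and then $|\tau_{\pm}(a_{*})|=2>\sqrt{2}$ and $\tau_{+}(a_{*})=-2<0<2=\tau_{-}(a_{*})$, as required.
\end{itemize}
The lemma then produces an open region $P$ with closure meeting $\R_{\geq 0}\times\{0\}$, together with a diffeomorphism $h$ such that $h^{-1}\circ L_{a,b}\circ h$ is admissible on $P$. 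This $h$ is $(x,y)\mapsto (x,b^{1/k}y)$, as in that proof.

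The main obstacle is the regularity requirement that $b^{-1/k}\delta(b)$ be $C^{2}$ for $b\geq 0$. For $k=2$ the function $b^{1/2}$ is not even $C^{1}$ at $b=0$. There are two ways around this. One option is to choose $k=1$, for which $b^{-1}\delta(b)\equiv 1$ is smooth. But then $b^{-1/k}\delta(b)\to 0$ fails, and the conjugated perturbation $g_{a,b}(x,y)=(by,x)$ does not vanish at $b=0$.

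The cleaner remedy is to reparametrize the dissipation, writing $b=\beta^{2}$ (or $\beta^{2k}$ in general). With $k=2$ we get $b^{-1/2}\delta(b)=\beta$, which is smooth in $\beta\geq 0$. The conjugated map then becomes $(r_{\tau}(x),0)+(\beta y,\beta x)$, which is $C^{\infty}$ in $(a,\beta,x,y)$ and has $C^{1}$ norm $O(\beta)$ on compacts. This gives \ref{stepB1}--\ref{stepB2} directly, and Remark \ref{remark:stepsA-B} applies.

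I will also check that $P$ contains points with $a$ near $2$, so that $(2,0)\in\cl P$. In the proof of Lemma \ref{lemma:bcnf-are-admissible-families}, Step A is verified for $r_{\tau}$ near $a_{*}$ on some interval $A\ni a_{*}$ in the interior. The region $P$ is of the form $A\times(0,b_{1})$, so its closure contains $(a_{*},0)=(2,0)$.
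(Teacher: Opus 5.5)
Your approach is the paper's. The paper states this lemma as an immediate consequence of Lemma \ref{lemma:bcnf-are-admissible-families}, with $\tau_{-}(a)=a$, $\tau_{+}(a)=-a$, $\delta(b)=b$ and $a_{*}=2$, and gives no further argument. That is exactly your first two steps. The real difference is your reparametrization, and on that point you are right and the paper is not.

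With $\delta(b)=b$, the function $b^{-1/k}\delta(b)=b^{1-1/k}$ is either $\equiv 1$ (for $k=1$) or not $C^{1}$ at $0$ (for $k\ge 2$). So the cited lemma does not literally apply. No rescaling in the original parameter can repair this. For $h_{b}(x,y)=(x,\phi(b)y)$ one gets $g_{a,b}=(\phi(b)y,\,b\phi(b)^{-1}x)$. Then \ref{stepB2} would require both $\phi(b)\le Cb$ and $b\phi(b)^{-1}\le Cb$, which is impossible for small $b$.

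Your choice $b=\beta^{2}$, $h_{\beta}(x,y)=(x,\beta y)$, $g=(\beta y,\beta x)$ balances the two components and is correct. It is precisely Lemma \ref{lemma:bcnf-are-admissible-families} applied to $\xi(a,\beta)=(a,-a,\beta^{2},\beta^{2})$ with $k=1$. That is a cleaner way to say it than ``$k=2$''. The paper's one-liner buys brevity; yours gives an argument whose hypotheses are actually verified.

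You should, however, state explicitly what you have proved. The family is admissible when indexed by $(a,\beta)=(a,\sqrt{b})$. So $P$, the interval $B$ and the estimates in \ref{stepB1}--\ref{stepB2} refer to $\beta$, not to $b$. In the original parameter the perturbation has size $\sqrt{b}$ and is not $C^{1}$ at $b=0$.

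Likewise, $h=h_{\beta}$ depends on the parameter, as it does in the paper. A single $b$-independent $h$ cannot work. Continuity of $g$ at $b=0$ would force $h^{-1}\circ L_{a,0}\circ h=(f_{a}(x),0)$. But $L_{a,0}$ has connected fibres (V-shaped curves), while the fibres of $(f_{a}(x),0)$ are pairs of vertical lines. The statement has to be read with both of these conventions.
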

	\subsection{Proof of Thm. \ref{mainthm:chaotic-attractors}: existence of chaotic attractors}
	In this section, we show that, under the assumption of piecewise hyperbolicity and high expansion, $\cl\UnstableMan{z_{+}}$ is a chaotic attractor in the sense of Definition \ref{def:chaotic-att}. On the other hand, in later sections, we will tackle the question whether $\cl\UnstableMan{z_{+}}$ is a topological strange attractor, and what the non-wandering set is in $Q$. Recall the statement of Theorem \ref{mainthm:chaotic-attractors}.
	\begin{theorem*}[Thm. \ref{mainthm:chaotic-attractors}]
		Le $\cc{F}:=\{f_{a,b}\}_{(a,b)\in A\times B}$ be a piecewise hyperbolic admissible family, satisfying \ref{stepA2-expansion} and \ref{stepA2-hyp}, and $a_{*}$ be the parameter such that $f_{a_{*}}(c)=q_{+}$, and $f_{a}(c)>q_{+}$ for $a\in (a_{*},a_{2})$. Then there exists $\tilde b\in B$ such that for $b\in (0,\tilde b)$, there are intervals $\tilde A^{b}\subset A$ with $\tilde{A}^{b}\nearrow (\tilde a,a_{*})$, for some $\tilde a\in A$, and such that $\cl\UnstableMan{z_{+}}$ is a chaotic attractor in $\tilde{A}\times\{b\}$. 
	\end{theorem*}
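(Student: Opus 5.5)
We check the four requirements of Definition \ref{def:chaotic-att} for $\Lambda:=\cl\UnstableMan{z_{+}}$, essentially all from Corollary \ref{corollary:stmani-dense-in-R}. The parameter set is fixed first. Take $\tilde b\le b_{0}$ from Lemma \ref{lemma:basic-properties}, and shrink it so that Lemmas \ref{lemma:cone-inv}, \ref{lemma:position-inv-manifolds-fixed-points} and Corollary \ref{corollary:stmani-dense-in-R} hold with expansion rate $c_{1}<\tilde c_{1}$ still satisfying $e^{c_{1}}>\sqrt2$. Set $\tilde A^{b}:=A^{b}\cap(a_{*}-\alpha,a_{*})$, with $\alpha$ from \ref{stepA2-hyp} and $\tilde a:=\max(a',a_{*}-\alpha)$. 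These intervals inherit $\tilde A^{b}\nearrow(\tilde a,a_{*})$ from $A^{b}\nearrow(a',a_{*})$.

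\emph{Compactness, invariance, and attraction.} By Lemma \ref{lemma:basic-properties} the rectangle $Q$ is forward invariant. The point $z_{+}\in Q$, and $\UnstableMan{z_{+}}=\bigcup_n f^n(\UnstableLocMan{z_{+}})$ lies in $Q$ once the local manifold is short. Hence $\Lambda\subset Q$ is compact, and it is invariant because $f$ is a homeomorphism.

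For the stable set, we show that $\omega(z)\subset\Lambda$ for all $z$ in an open set, namely the interior of $f(Q)$ or of $Q$. Fix $z$ with a $u$-curve $\UnstableLocMan{z}$; by Lemma \ref{lemma:density-of-unst-mani} such $z$ form a full-measure set. By the stable-manifold version of the argument, it suffices to bring forward orbits close to $\UnstableMan{z_{+}}$. The plan is to use the dual statement: iterates of any vertical $s$-strip are pulled back. More precisely, take a short $s$-curve $\sigma$ through $z$ that crosses $\UnstableMan{z_{+}}$. Such a curve exists because $\UnstableMan{z_{+}}$ contains $u$-curves which, by repeating Lemma \ref{proposition:st-mani-X-intersects-arcs} in the stable direction, cut $Q$ across. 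Forward iterates of $\sigma$ contract by \eqref{lemma:cone-inv:C2}. Away from $C_{\delta}$ we have $\dist(f^n(z),\UnstableMan{z_{+}})\to0$. Near $\Rc$, the map contracts vertical directions by $Cb$ regardless of position. Hence $f^n(\sigma)$ shrinks uniformly, and $\omega(z)\subset\Lambda$.

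I expect this step to be the main obstacle. The sticking points are the contraction near $\Rc$ and the existence of the crossing $s$-curve. The alternative route is to show directly that $f^2(Q)$ lies in a small neighbourhood of $\Lambda$ that shrinks under iteration, using that $f(Q)$ is a thin $Cb$-neighbourhood of the folded curve $f(\Rc\cap Q)$. This curve is contained in $f(Q)$, and for $b$ small it lies near $\UnstableMan{z_{+}}$.

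\emph{Transitivity, dense periodic points, sensitivity.} Mixing of $f|_\Lambda$ is given by Corollary \ref{corollary:stmani-dense-in-R}. Mixing gives transitivity and therefore a dense orbit, since $\Lambda$ has no isolated points. For density of periodic points, take an open $U$ meeting $\Lambda$. Choose a $u$-curve $\gamma\subset U\cap\UnstableMan{z_{+}}$ and an $s$-curve $\sigma\subset U$ in $\StableMan{z_{+}}$, which exists by density. By the Claim in the proof of the Corollary, some $f^n(\gamma)$ crosses $\sigma$ transversally, giving a transverse homoclinic point of $z_{+}$ inside $U$. Apply the Smale--Birkhoff horseshoe construction in the hyperbolic region; it is valid because the orbit of the homoclinic point can be taken to avoid $\Rc$ after shrinking. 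This produces periodic points in $U$, and they lie in $\Lambda$ because horseshoe points are accumulated by $\UnstableMan{z_{+}}$ (inclination lemma). Sensitivity follows from the standard fact that transitivity plus dense periodic points on an infinite metric space implies sensitive dependence (Banks et al.). Alternatively, it follows directly from the uniform expansion of $u$-curves in Lemma \ref{lemma:cone-inv}.
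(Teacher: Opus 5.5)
Your treatment of mixing, density of periodic points (transverse homoclinic points of $z_{+}$, horseshoes, and the inclination lemma to place the periodic points in $\Lambda$) and sensitivity via Banks et al.\ follows the paper's route. Restricting $\tilde A^{b}$ to $|a-a_{*}|<\alpha$ so that \ref{stepA2-hyp} applies is a reasonable precaution. The genuine gap is the step showing that the stable set of $\Lambda$ has nonempty interior.

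\emph{The target is too strong.} Showing $\omega(z)\subset\Lambda$ for every $z\in\inter Q$ (or $\inter f(Q)$) is exactly the key statement in the proof of Theorem~\ref{mainthm:maximal-attractors}, i.e.\ maximality of $\Lambda$. The paper obtains this only on the special sets $K_{n}\times(0,k_{n})$, through the renormalization model. It does not claim it on all of $\tilde A^{b}$; in the orientation preserving case it even cites numerical evidence of Glendinning and Simpson against maximality. Your ``alternative route'' of shrinking $f^{2}(Q)$ onto $\Lambda$ is the same claim.

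\emph{The mechanism also fails.} Condition \eqref{lemma:cone-inv:C2} says that $df^{-1}$ maps stable cones into stable cones and expands them. It does not say that $df$ preserves stable cones. Hence it gives forward contraction only for curves whose forward images stay $s$-curves, i.e.\ pieces of stable manifolds. A generic $s$-curve is sent in one step to a short curve in an uncontrolled direction and is then expanded at rate $e^{c_{1}}$. For example, for the rescaled BCNF of Lemma~\ref{lemma:bcnf-are-admissible-families} one has $df(0,1)=(b^{1/k},0)$, a horizontal vector. So $f^{n}(\sigma)$ does not shrink, and $f^{n}(z)$ need not shadow $f^{n}(w)$ for $w\in\sigma\cap\UnstableMan{z_{+}}$. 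Moreover, $\Lambda\subset f(Q)$ lies within $O(b)$ of $\R\times\{0\}$, so a short $s$-curve through a typical $z\in Q$ does not meet $\UnstableMan{z_{+}}$ at all.

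\emph{The fix.} You only need one open set in the basin, and the paper gets it cheaply. Take the disc $W$ bounded by two arcs: the arc of $\StableLocMan{z_{+}}$ from $z_{+}$ to the first homoclinic point $p\in\Rp$ on it, and the arc $\tau\subset\UnstableMan{z_{+}}$ from $z_{+}$ to $p$. Then apply Lemma~\ref{lemma:manifold-bounded-region->max}. The reason this works is elementary.
\begin{itemize}
\item $f^{n}(W)$ is a disc whose boundary is $f^{n}(\tau)\subset\UnstableMan{z_{+}}$ together with a stable arc shrinking to $z_{+}$.
\item $\operatorname{area} f^{n}(W)\to 0$, because $f$ contracts area for small $b$.
\item Every point of a disc of area $\varepsilon$ lies within $\sqrt{\varepsilon/\pi}$ of its boundary.
\end{itemize}
Hence $\omega(z)\subset\Lambda$ for all $z\in\inter W$. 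Replacing your basin paragraph with this argument completes the proof.
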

	\begin{proof}[Proof of Thm. \ref{mainthm:chaotic-attractors}]
		The parameter sets $A^{b}$ are fixed so that for $b>0$ small enough, Lemma \ref{lemma:basic-properties} holds. Decreasing $b$ further we may assume that Lemma \ref{lemma:cone-inv} holds. Consequently, Lemma \ref{lemma:position-inv-manifolds-fixed-points} holds, that is, the positions of stable and unstable manifolds of the fixed poits $z_{+}$, $z_{-}$ are as depicted on Figure \ref{fig:manifolds-positions-OR}. These relative positions are necessary for the results of Section \ref{subsection:chaotic-prop} to hold. In particular, for parameters in $A^{b}\times \{b\}$, for $b>0$ small enough, $\StableMan{z_{+}}$ intersects every arc of any unstable manifolds in $Q$ and $f|_{\cl\UnstableMan{z_{+}}}$ is mixing. The former implies that the homoclinic intersections of $\StableMan{z_{+}}$ and $\UnstableMan{z_{+}}$ are dense in $\cl\UnstableMan{z_{+}}$. As the family $\cc{F}$ is piecewise uniformly hyperbolic, those intersections are transverse. Near any transverse homoclinic intersection there is a horseshoe of some renormalization of the original map $f$, see for example \cite{palistakens:hyp}. A horseshoe always contains periodic points. It follows that periodic points are dense in $\cl\UnstableMan{z_{+}}$. The mixing of $f|_{\cl\UnstableMan{z_{+}}}$ implies transitivity. Note that transitivity and density of periodic orbits imply sensitivity to initial conditions by results of \cite{banks-brooks-devaney-chaos-1992}. Overall, we have shown that $f|_{\cl\UnstableMan{z_{+}}}$ has Devaney chaos.
		
		It remains to verify that $\cl\UnstableMan{z_{+}}$ is an attractor. It is a compact, $f$-invariant set. Transitivity of $f|_{\cl\UnstableMan{z_{+}}}$ implies the existence of a dense orbit. The fact that the stable set has nonempty interior can be readily seen by finding a disc $W$ bounded by curves contained in $\UnstableMan{z_{+}}$ and $\StableMan{z_{-}}$, as in the proof of Lemma \ref{lemma:maximality-OR} (orientation reversing case) or Lemma \ref{lemma:fixedpoint-in-omega(c)->maximality}. For example, we may take $W$ to be bounded by $\StableLocMan{z_{+}}$ and a curve $\tau\subset \UnstableMan{z_{+}}$ that connects $z_{+}$ and the first homoclinic intersection of $\StableLocMan{z_{+}}$ and $\UnstableMan{z_{+}}$ in $\Rp$.		 
	\end{proof}
	\section{Renormalization model}\label{section:renormalization}
	\begin{figure}[!ht]
		\centering
		\includegraphics[width=\textwidth]{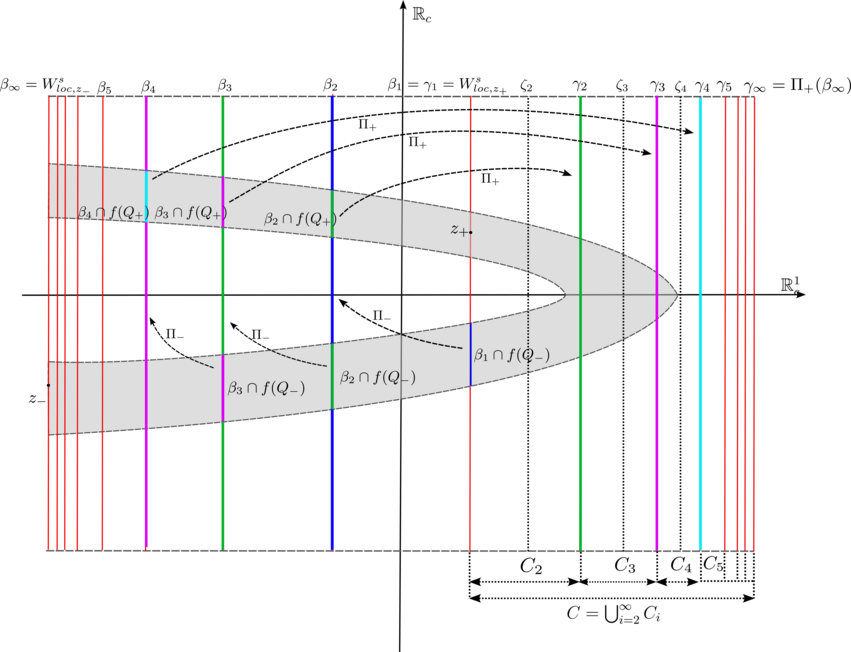}
		\caption{The dynamics of pullbacks of stable curves, creating the renormalization model. The grey region depicts $f(Q)$. We denoted $Q_{\pm}:=Q\cap \bb{R}_{\pm}$.}
		\label{fig:renormalization}
	\end{figure}
	\begin{remark}
		Let us introduce a standing assumption for the rest of this article of $(a,b)\in A^{b}\times\{b\}$, so that Lemmas \ref{lemma:basic-properties} and \ref{lemma:position-inv-manifolds-fixed-points} hold. 
	\end{remark}
	Recall that by $\Rn$ and $\Rp$ we denoted, respectively, the left and right connected components of $\plane\setminus \Rc$. Let $f_{\pm}=f|_{\R_{\pm}}$. For $s$-curves $\gamma_{1},\gamma_{2}$ on $Q$, with disjoint interiors, we have defined $S=S(\gamma_{1},\gamma_{2})$ as the closure of the connected component of $Q\setminus\left(\gamma_{1}\cup\gamma_{2}\right)$ with $\gamma_{1}\cup\gamma_{2}\subset \fr S$. 
	
	We now proceed to a brief reminder of the renormalization construction originally presented in \cite{dyi-shing-ou:critical-points-I} by Dyi-Shing Ou. The construction has been introduced as a way of studying differences between unimodal families and their planar perturbation, the Lozi family, in the order of periodic orbits creations. It was also proposed by Ou as a tool to analyze Hausdorff continuity of strange attractors of the Lozi family \cite{dyi-shing-boronski:disc-lozi-att}.
    
    Note that the construction is valid for the degenerate case of $b=0$. We first define pull-backs of $s$-curves on $Q$. Note that $f(Q)$ is folded along $\Ryc=f(\Rc)$, and $f(Q\cap\Rc)$ is composed of two $u$-curves according to Lemma \ref{lemma:basic-properties}. Let $u_{L}$ be the left utmost point of $f(Q\cap\Rc)$ and $u_{R}$ its right utmost point. Moreover, let $\omega$ be an $s$-curve on $Q$. If $\omega\cap \Ryc $ is to the left of $u_{L}$, then $f^{-1}(\omega)\cap Q$ contains two $s$-curves, one is $\Pi_{-}(\omega)\subset \Rn$ and the other is $\Pi_{+}(\omega)\subset\Rp$, where $\Pi_{\sigma}(\omega):=f_{\sigma}^{-1}(\omega\cap f(\R_{\sigma}\cap Q)) $. In this way, the transformations $\Pi_{-}$ and $\Pi_{+}$ define pull-backs of $s$-curves by the two branches of $f$.
	
	Let us consider sufficiently long stable and unstable local manifolds of $z_-$ and $z_+$ so that Lemma \ref{lemma:basic-properties} holds, that is the manifolds connect upper and lower $u$-curves contained in the boundary $Q$. Recall that $\StableLocMan{z_{+}}\cap Q$ is an $s$-curve on $Q$, connecting lower and upper faces of $Q$. Let $\beta_{1}=\StableLocMan{z_{+}}\cap Q$, and $\beta_{m}=\Pi_{-}(\beta_{m-1})$ for $2\leq m< \infty$, $\beta_{\infty}:=\StableLocMan{z_{-}}\cap Q$, $\gamma_{m}=\Pi_{+}(\beta_{m})$ for $1\leq m <\infty$ and $\gamma_{\infty}=\Pi_{+}(\beta_{\infty})\subset\StableMan{z_{-}}$. The $s$-curves $\{\beta_{m}\}_{1\leq m<\infty}$ are subsets of $\StableMan{z_{+}}$, while, as mentioned before, $\beta_{\infty}\cup\gamma_{\infty}\subset \StableMan{z_{-}}$. Let $C=S(\gamma_{1},\gamma_{\infty})\setminus\gamma_\infty$, $C_{m}=S(\gamma_{m-1},\gamma_{m})$ for $2\leq m < \infty$. The sets $\{C_{m}\setminus \gamma_{i-1}\}_{2\leq m <\infty}$ form a partition of $C\setminus \left(\gamma_1\cup\gamma_\infty\right)$.

	Define the map $h\colon C\to C$ by the formula \[
	h(x) = f^i(x) \text{ for }x \in C_i\setminus \gamma_{i-1},
	\] and $h(x)=f(x)$, for $x\in\gamma_1$. Now, note that $h$ is the first return map of $C$. That is, $h(x) = f^{j(x)}(x)$, where $j(x)=\min\{j\in\N\colon f^j(x)\in C\}$, and $j(x)=i$ on $C_i$. In other words, $C_{i}$ visits $C$ for the first time at the $i$-th  iteration under $f$. Moreover, from the construction we see that $i-1=\min\{j\geq0\colon f^j(C_{i})\cap \Rc\neq\emptyset\}$. Hence, $h|_{C_{i}}=f^{i}|_{C_{i}}$ has a critical curve $\zeta_{i}:=f^{-i+1}(\Rc)\cap C_{i}$. Let us denote by $C^{-}_{i}$, respectively $C^{+}_{i}$, the connected component of $C_{i}\setminus\zeta_{i}$ that is mapped under $f^{i-1}$ to the left half plane and, respectively, the right half plane.
	We can summarize the above discussion in the following manner.
	\begin{prop}\label{proposition:first-return}
		The following holds \begin{enumerate}[label=(\roman*)]
			\item\label{proposition:first-return:c1} The map $h\colon C\to C$ is the first return map to $C$
			\item\label{proposition:first-return:c2} the sets $C_i\setminus \gamma_{i-1}$, $i\geq 2$, are the regions of constant first visit under $f$ to $C$; i. e. $$
			C_i\setminus \gamma_{i-1} = \{z \in \plane\colon \min\{k \in\{1,2,...\}\colon f^k(z)\in C\}=i\}
			$$ 
			\item\label{proposition:first-return:c3} for every $i\geq 2$, there exists an $s$-curve $\zeta_{i}\subset C_{i}$ such that $f^{i-1}(\zeta_i)\subset \Rc$ and $C^{\pm}_i$ are the connected components of $C_i\setminus\zeta_i$
			\item\label{proposition:first-return:c4} $C^{+}_{i}$ is to the left of $C^{-}_{i}$
		\end{enumerate}
	\end{prop}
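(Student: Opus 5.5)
The plan is to derive all four claims from the way the curves $\beta_m,\gamma_m$ are built, using the pull-back operators $\Pi_\pm$ and induction on $m$.

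\emph{Step 1: ordering of the curves.} First I would show that the $s$-curves $\beta_m$ are nested monotonically. Specifically, $\beta_2=\Pi_-(\beta_1)$ lies between $\beta_\infty=\StableLocMan{z_-}$ and $\beta_1$. Inductively, $\beta_m$ lies strictly between $\beta_\infty$ and $\beta_{m-1}$, and $\beta_m\to\beta_\infty$. The argument goes as follows.
\begin{itemize}
\item The map $f_-$ is a diffeomorphism near $z_-$, and $f|_{\StableLocMan{z_-}}$ together with the transverse direction preserve the relevant orientation by Lemma \ref{lemma:position-inv-manifolds-fixed-points}.
\item The region $S(\beta_\infty,\beta_1)\cap\Rn$ is mapped by $f_-$ onto a strip whose left face is $\beta_\infty$. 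Pulling back therefore preserves the order.
\item Convergence $\beta_m\to\beta_\infty$ follows from expansion of almost horizontal vectors (Lemma \ref{lemma:cone-inv}) away from $C_\delta$, via the inclination lemma.
\end{itemize}
Applying $\Pi_+$, which is a diffeomorphism from $f(\Rp\cap Q)$ onto $\Rp\cap Q$, transfers this order to the curves $\gamma_m$. They are nested in $S(\gamma_1,\gamma_\infty)$ with $\gamma_m\to\gamma_\infty$. Hence the sets $C_m\setminus\gamma_{m-1}$ partition $C\setminus(\gamma_1\cup\gamma_\infty)$.

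\emph{Step 2: return times, giving (i) and (ii).} Take $z\in C_i\setminus\gamma_{i-1}$. By construction, $f(C_i)=f_+(C_i)$ is the strip between $\beta_{i-1}$ and $\beta_i$ intersected with $f(Q)$. Each further application of $f_-$ (for $z$ left of $\Rc$) maps the strip $S(\beta_{m},\beta_{m+1})$ into $S(\beta_{m-1},\beta_m)$. So $f^{k}(z)$ lies in the strip between $\beta_{i-k}$ and $\beta_{i-k+1}$ for $1\le k\le i-1$, and these strips are disjoint from $C\subset\Rp$ as long as $i-k\ge 1$.

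The point needing care is where $S(\beta_1,\beta_2)$ crosses $\Rc$. This is exactly where the fold happens, and the image of $S(\beta_1,\beta_2)\cap f(Q)$ lands in $S(\gamma_1,\gamma_\infty)$, by the definition of $\gamma_1=\Pi_+(\beta_1)$ and the fact that the fold image stays to the right of $\gamma_\infty\subset\StableMan{z_-}$. Thus $f^{i}(z)\in C$ and $f^{k}(z)\notin C$ for $k<i$. This gives (ii), and (i) follows immediately. Points of $\gamma_1$ are treated separately, since $\gamma_1$ is mapped onto $\beta_1\cap f(Q)$ and $f(\gamma_1)$ meets $C$.

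\emph{Step 3: critical curves, giving (iii) and (iv).} The curve $f^{i-1}(C_i)$ is the strip $S(\beta_1,\beta_2)\cap f^{i-1}(C_i)$, and it crosses $\Rc$ because $\beta_2\subset\Rn$ and $\beta_1\subset\Rp$. Define $\zeta_i=f^{-(i-1)}(\Rc)\cap C_i$. Since $\Rc$ is vertical, and $df^{-1}$ preserves stable cones on $f(K\setminus C_\delta)$ (Lemma \ref{lemma:cone-inv}), and the first $i-1$ iterates of $C_i$ avoid $C_\delta$ for $b$ small, $\zeta_i$ is an $s$-curve. It separates $C_i$ into $C_i^\pm$.

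For (iv), track orientation. The map $f_+$ followed by $i-2$ applications of $f_-$ carries left-right order in $C_i$ to an order in $S(\beta_1,\beta_2)$. $f_+$ reverses the horizontal orientation, since the right branch is decreasing in the unimodal model. Each $f_-$ preserves it. So the composite reverses order, and the component mapped to $\Rp$, namely $C_i^+$, sits to the left of $C_i^-$.

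\emph{Main obstacle.} I expect Step 2 at the fold to be the hardest part: showing that $f(S(\beta_1,\beta_2))$ returns into $C$ and not elsewhere in $Q$. This requires the position of $f(Q)\cap\Ryc$ relative to $\gamma_1$ and $\gamma_\infty$, which holds only for parameters in $A^b$ left of $a_{*,b}$ (Remark \ref{remark:endpoint-of-Ab}). I would handle it by perturbation from $b=0$, where everything reduces to the one-dimensional first return map of $f_a$ to $[z_+',q_+]$, using the Hausdorff convergence of the manifolds as in Lemma \ref{lemma:basic-properties}.
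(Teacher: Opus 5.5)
Your proposal is correct and follows essentially the paper's route. Items (i)--(iii) are read off from how $\beta_m=\Pi_-(\beta_{m-1})$ and $\gamma_m=\Pi_+(\beta_m)$ are built. Your order-reversal argument for (iv) (one reversal by $f_+$, then $i-2$ order-preserving applications of $f_-$) amounts to the paper's observation that $f^{i-1}$ sends the left face $\gamma_{i-1}$ of $C_i$ into $\beta_1\subset\Rp$ and the right face $\gamma_i$ into $\beta_2\subset\Rn$.

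Two slips need fixing. First, the folded image of $S(\beta_1,\beta_2)\cap f(Q)$ must stay to the \emph{left} of $\gamma_\infty$ (and to the right of $\gamma_1$), not to the right of $\gamma_\infty$. Second, for $k=i-1$ the strip $S(\beta_1,\beta_2)$ is not disjoint from $C$: it meets $C$ along $\beta_1=\gamma_1$, and $f^{i-1}$ reaches $\beta_1$ from $C_i$ exactly at the points of $\gamma_{i-1}$. That is precisely where your hypothesis $z\notin\gamma_{i-1}$ must be used.
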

	\begin{proof}
		By Lemma \ref{lemma:basic-properties} if $(a,b)$ is close to $(a_{*},0)$, the curves $\beta_{i}$ and $\gamma_{i}$, for $i\geq 1$ are well defined $s$-curves. Indeed, for that, it suffices to note that by Lemma \ref{lemma:basic-properties} $\StableLocMan{z_{+}}\cap \Ryc$ is to the left of $u_{L}$, the claim follows by induction on $i$. The fact that $C_i\setminus \gamma_{i-1}$ are regions of constant first visit time follows directly from the construction.
		
		Let us justify \ref{proposition:first-return:c4}. The vertical face $E_+$ of $C^{+}_{i}$ contained in $\StableMan{z_{+}}$, by the construction of the renormalization model, is mapped under $f^{i-1}$ to $\StableLocMan{z_{+}}$, and so necessarily $E_+=\gamma_{i-1}$. Similarly, the vertical face $E_{-}$ of $C^{-}_{i}$ contained in $\StableMan{z_{+}}$ is mapped to $\beta_{2}$ under $f^{i-1}$, and is mapped to $\StableLocMan{z_{+}}$ under $f^{i}$. Hence, $E_{-}=\gamma_{i}$. As $\gamma_{i-1}$ is to the left of $\gamma_{i}$, the same relation holds for $C^{+}_{i}$ and $C^{-}_{i}$.
	\end{proof}
	
	
	Let $0<\lambda:=\mu^{-1}<1$ be the contraction constant from Lemma \ref{lemma:cone-inv}. We refer the reader to Figures \ref{fig:renormalization}, \ref{figure:h(C)-OP} and \ref{figure:h(C)-OR} for geometric clues.
	\begin{prop}\label{proposition:multi-renorm}
	\end{prop}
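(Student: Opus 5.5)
Since the statement of Proposition~\ref{proposition:multi-renorm} is printed without a body, I will plan a proof of what its label and its placement right after Proposition~\ref{proposition:first-return} indicate. Namely, the first-return map $h\colon C\to C$ should again have the structure needed to rerun the renormalization construction, so that it can be iterated. Concretely, I aim to show the following.
\begin{enumerate}
\item On each $C_i$, $h=f^i$ is a folded map with critical curve $\zeta_i$ from Proposition~\ref{proposition:first-return}.
\item Almost horizontal vectors in $C_i^{\pm}$ are expanded by $dh$, while almost vertical vectors are contracted by a factor of order $\lambda^{i}$, with $\lambda=\mu^{-1}$ the constant of Lemma~\ref{lemma:cone-inv}.
\item The images $h(C_i^{\pm})$ are $u$-strips in $C$ folded along $h(\zeta_i)$, of thickness $O(\lambda^{i})$.
\end{enumerate}
Consequently, after an affine rescaling, $h$ restricted to the appropriate $C_i$ is again a small perturbation of a unimodal map. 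The perturbation size is controlled by $\lambda^{i}$ rather than by $b$, so the renormalization can be applied repeatedly.

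\textbf{Step 1: cone estimates along $C_i$.} For $z\in C_i^{\pm}$, the orbit $z,f(z),\dots,f^{i-1}(z)$ stays away from $\Rc$ except at the last step, because $i-1$ is the first time $f^{j}(C_i)$ meets $\Rc$. I will therefore apply Lemma~\ref{lemma:cone-inv} on $K=Q$ with a fixed $\delta$. The orbit segments before the fold shadow $\StableLocMan{z_-}$: they pass through the pull-backs $\beta_m$, which accumulate on $\beta_\infty$. On such segments:
\begin{itemize}
\item \eqref{lemma:cone-inv:C1} gives invariance and expansion of unstable cones under $df^{i-1}$;
\item \eqref{lemma:cone-inv:C2} gives contraction by $\lambda^{i-1}$ of the stable cones pulled back.
\end{itemize}
The last iterate is applied near the fold, and its derivative is bounded by $S(\delta)$ plus $Cb$ from \ref{stepB2}. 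This yields $dh$-estimates uniform in $i$, up to the factor $\lambda^{i}$.

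\textbf{Step 2: geometry of $h(C_i^{\pm})$.} The vertical faces of $C_i^{+}$ are $\gamma_{i-1}$ and $\zeta_i$, and those of $C_i^{-}$ are $\zeta_i$ and $\gamma_i$, by Proposition~\ref{proposition:first-return}\ref{proposition:first-return:c4}. Under $f^{i}$, the faces $\gamma_{i-1}$ and $\gamma_i$ map into $\StableLocMan{z_+}$, and $\zeta_i$ maps into $\Ryc$. The horizontal faces lie in $\fr Q$, and their images are $u$-curves whose vertical separation is at most a constant times $\lambda^{i}$, by Step~1. Hence $h(C_i)$ is a thin fold with tip on $f^{i}(\zeta_i)\subset\Ryc$. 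Its position relative to $\gamma_1$ and $\gamma_\infty$ is governed by how far the tip of $f(Q)$ lies from $u_L$, i.e.\ by $a-a_{*,b}$ (Remark~\ref{remark:endpoint-of-Ab}). The orientation cases $\epsilon=\pm1$ will be treated separately via Lemma~\ref{lemma:position-inv-manifolds-fixed-points}.

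\textbf{Step 3: renormalization of $h|_{C_i}$.} I will choose an affine change of coordinates that sends $C_i$ to a square. In these coordinates, $h$ has the form $(\tilde f(x),0)$ plus an error of $C^1$ size $O(\lambda^{i}+b)$, where $\tilde f$ is unimodal with an expanding fixed point at the image of $\gamma_{i-1}$. I will then check Steps A and B for this rescaled family, with the parameter $a$ reparametrized so that $\tilde f$ unfolds the tangency.

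\textbf{Main obstacle.} I expect the main difficulty to be the $C^1$ closeness in Step~3. The composition $f^{i}$ has unbounded horizontal expansion, and the error terms from \ref{stepB2} accumulate along the orbit. My first attempt will be to split $f^{i}=f\circ f^{i-1}$. I will control the $f^{i-1}$ part by linearizing near the hyperbolic fixed point $z_-$ using Proposition~\ref{proposition:inv-manifolds-endo2}, so that the near-fixed-point dynamics is almost linear with eigenvalues close to $df_a(q_-)$ and $O(b)$. I will then absorb the last fold into the unimodal part.
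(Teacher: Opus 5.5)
\textbf{The proposal proves a different statement from the one the paper asserts, so there is a genuine gap.}

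The proposition environment is empty in the source. Its intended content is the pair of equivalences \eqref{proposition:multi-renorm:i3:1} and \eqref{proposition:multi-renorm:i3:2} printed directly after it. For $i,j\geq 2$, the strips $C_i$ lie along $\Ryc$ from left to right as $i$ increases. The folds $h(C_i)$ lie along $\Ryc$ in the order of $-(\epsilon\lambda)^{i-1}$. They are therefore monotone in $i$ when $f$ preserves orientation, and alternate around $f(\UnstableLocMan{z_-})$ when it reverses orientation. This is exactly what is used later, for example $\pi_{1}(h(C_j)\cap\Ryc)\leq\pi_{1}(h(C_2)\cap\Ryc)$ in Lemma \ref{lemma:maximality-OR}, and ``$h(C_2)$ to the left of $h(C_3)$'' in the proof of Theorem \ref{mainthm:maximal-attractors}.

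You replaced this with a renormalization claim: that $h|_{C_i}$ rescales to a near-unimodal map with error $O(\lambda^{i}+b)$. The paper neither states nor needs this, and none of your three steps compares $h(C_i)$ with $h(C_j)$ for $i\neq j$ or produces the $\epsilon$-dependent monotone/alternating pattern. The substitute claim is also ill-posed. As your own Step 2 notes, $h=f^{i}$ sends both vertical faces $\gamma_{i-1}$ and $\gamma_i$ of $C_i$ into $\gamma_1=\StableLocMan{z_+}$. For $i\geq 3$ that curve is disjoint from $C_i$, so $h|_{C_i}$ does not return to $C_i$. After rescaling, there is then no unimodal self-map with an expanding fixed point at the image of $\gamma_{i-1}$.

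\textbf{The missing idea is sign bookkeeping along the stable curves near $z_-$.}

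The paper sets $K_1=f(C)$ and $K_m=f(K_{m-1})\setminus f^{m}(C_m)$. It then measures the signed length $\theta_{i,m}$ along $\beta_i$ between $K_m$ and $\UnstableLocMan{z_-}$.
\begin{enumerate}
\item Stable-cone contraction gives $|\theta_{i,m}|\leq\lambda|\theta_{i+1,m-1}|$. This is the only part your Step 1 supplies.
\item The decisive extra input is $\sgn{\theta_{i,m}}=\epsilon\sgn{\theta_{i+1,m-1}}$. It holds because $f$ preserves ($\epsilon=1$) or reverses ($\epsilon=-1$) the orientation of the $\beta_i$ near $z_-$, by Lemma \ref{lemma:position-inv-manifolds-fixed-points}. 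Iterating gives $\sgn{\theta_{1,m-1}}=\epsilon^{m-2}$.
\item The sets $f^{m}(C_m)$ are pairwise disjoint, since $f$ is injective and $h$ is a first-return map. Combined with the previous step, this orders the sets $E_{m-1}=\StableLocMan{z_+}\cap f^{m-1}(C_m)$ along $\StableLocMan{z_+}$ like $(\epsilon\lambda)^{m-2}$.
\item One more iterate reverses orientation on $\StableLocMan{z_+}$ exactly when $\epsilon=1$. This gives the order $-(\epsilon\lambda)^{m-1}$.
\item Each fold $h(C_m)$ joins $f(E_{m-1})$ to $\Ryc$, and the folds are disjoint. So this order transfers to $\pi_{1}(h(C_m)\cap\Ryc)$.
\end{enumerate}
The first equivalence is simply the left-to-right order of the curves $\gamma_i$. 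The whole argument is topological plus cone contraction. The $C^1$ renormalization estimates you flag as the main obstacle are not needed at all.
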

	We have the following for $i,j\geq 2$ \begin{equation}\label{proposition:multi-renorm:i3:1}
		\pi_{1}(C_{i}\cap \Ryc)\leq \pi_{1}(C_{j}\cap \Ryc)\text{ if and only if } -\lambda^{i}\leq -\lambda^{j},
	\end{equation} and \begin{equation}\label{proposition:multi-renorm:i3:2}
		\pi_{1}(h(C_{i})\cap \Ryc)\leq \pi_{1}(h(C_{j})\cap \Ryc)\text{ if and only if } -(\epsilon\lambda)^{i-1}\leq -(\epsilon\lambda)^{j-1}.
	\end{equation}
	\begin{proof}
The formula \eqref{proposition:multi-renorm:i3:1} follows from the fact that the curves $\gamma_{i}$ satisfy $\pi_{1}(\gamma_{i}\cap \Ryc) \leq  \pi_{1}(\gamma_{i+1}\cap \Ryc)$ and $C_{i}=S(\gamma_{i-1},\gamma_{i})$, for $i\geq 2$.

 Regarding the proof of \eqref{proposition:multi-renorm:i3:2}, first note that $\UnstableLocMan{z_{-}}$ is a $u$-curve contained in the lower half plane, connecting $\beta_\infty=\StableLocMan{z_{-}}$ with $\beta_1=\StableLocMan{z_{+}}$, and so  every $\beta_{i}$, $i\geq 1$, intersects $\UnstableLocMan{z_{-}}$ transversally in the lower half plane. Define inductively
		\begin{align*}
			K_{1}&:=f(C),\\
			K_{m}&:=f(K_{m-1})\setminus f^{m}(C_{m}) \text{ for } m\geq 2.
		\end{align*}

From the construction it follows that $K_{m}$ is a rectagle that connects $\beta_1$ and $\beta_\infty$. Hence, for $i\geq 1$ each $\beta_{i}$ intersects both the upper and lower faces of $K_{m}$ transversally. We define the signed distance between $K_{m}$ and $\UnstableLocMan{z_{-}}$ along $\beta_{i}$ by $\theta_{i,m}:=\tau_{s}\cdot\operatorname{length}(s)$, where $s$ is the shortest $s$-curve contained in $\beta_{i}$ that connects $K_{m}$ and $\UnstableLocMan{z_{-}}$, and 
		$$
		\begin{cases}
			\tau_{s}=1 \text{ if $s$-curve $s$ is above } \UnstableLocMan{z_{-}}\\
			\tau_{s}=-1 \text{ otherwise}.
		\end{cases}
		$$
		Now, it follows from the existence of the stable invariant cone family that 
		\begin{equation}\label{strange-attractors:eq:2}
			|\theta_{i,m}|\leq\lambda|\theta_{i+1,m-1}|
		\end{equation}
		for $i\geq 1$ and $m\geq2$. Moreover, as $f$ reverses for $\epsilon=-1$, and preserves for $\epsilon=1$, the orientation on $\beta_{i}$, we also have \begin{equation}\label{strange-attractors:eq:1}
			\sgn{\theta_{i,m}}=\epsilon\sgn{\theta_{i+1,m-1}}.
		\end{equation} Iterating \eqref{strange-attractors:eq:1} we obtain $$\sgn{\theta_{1,m-1}}=\epsilon^{m-2}\sgn{\theta_{m-1,1}}=\epsilon^{m-2}.$$ The latter equality follows from the fact that $\sgn{\theta_{m-1,1}}=1$. Indeed, note that $\theta_{1,1}$ and $\theta_{2,1}$ are the distances between, respectively, the right and left faces of $f(C_{2})$, and $\UnstableLocMan{z_{-}}$. In general, $\theta_{m-1,1}$ and $\theta_{m,1}$ are the distances between, respectively, the right and left faces of $f(C_{m})$, and $\UnstableLocMan{z_{-}}$. In particular, $\sgn{\theta_{m-1,1}}=1$. Moreover, the right face of $K_{m-1}$ is equal to the right face of $f^{m-1}(C_m)$. It follows that $\sgn{\theta_{1,m-1}}=\epsilon^{m-2}$ is the sign of the distance along $\StableLocMan{z_{+}}$ between the right boundary of $f^{m-1}(C_{m})$ and the point of the intersection $\StableLocMan{z_{+}}\cap \UnstableLocMan{z_{-}}$. As $h$ is a first visit map, the orbits of $C_{m}$ must be disjoint, and so, combining formulas \eqref{strange-attractors:eq:2} and \eqref{strange-attractors:eq:1}, the order on $\{\theta_{1,m-1}\}_{m=2}^{\infty}\subset \R$, as a subset of real numbers, is isomorphic with the order on $\{(\epsilon\lambda)^{m-2}\}_{m=2}^\infty\subset \R$, as a subset of real numbers. Let
		\[E_{m-1}=\StableLocMan{z_{+}}\cap f^{m-1}(C_{m}).\]
		Note that the order on $\{\theta_{i,m}\}$ defines the order on 
		$\cc{E}=\{E_{m-1}\}_{m=2}^{\infty}$
		via the relation $\theta_{1,m-1}\leq \theta_{1,m'-1}$ if and only if $E_{m-1}\leq E_{m'-1}$, for $m,m'\in \{2, 3,...\}$. In other words, $E_{m-1}$ is below $E_{m'-1}$ if and only if $\theta_{1,m-1}\leq \theta_{1,m'-1}$. Consequently, since the orders $\{(\epsilon\lambda)^{m-2}\}_{m=2}^\infty\subset \R$ and $\{\theta_{1,m-1}\}_{m=2}^{\infty}\subset \R$ are isomorphic, $(\epsilon\lambda)^{m-2}\leq (\epsilon\lambda)^{m'-2}$ if and only if $E_{m-1}$ is below $E_{m'-1}$. Now, note that $f$ preserves orientation on $\StableLocMan{z_{+}}$ for $\epsilon=-1$, and reverses otherwise, and so $f(E_{m-1})$ is below $f(E_{m'-1})$ if and only if $-(\epsilon\lambda)^{m-1}\leq -(\epsilon\lambda)^{m'-1}$. Finally, note that the sets $h(C_{m})$ connect $f(E_{m-1})$ with $h(C_{m})\cap\Ryc$, and so the mapping $\pi_{2}(f(E_{m-1}))\mapsto \pi_{1}(h(C_{m})\cap\Ryc)$ preserves the orders on real numbers, this shows \eqref{proposition:multi-renorm:i3:2}.
	\end{proof}
	\begin{figure}[h!]
		\centering
		\includegraphics[width=\textwidth]{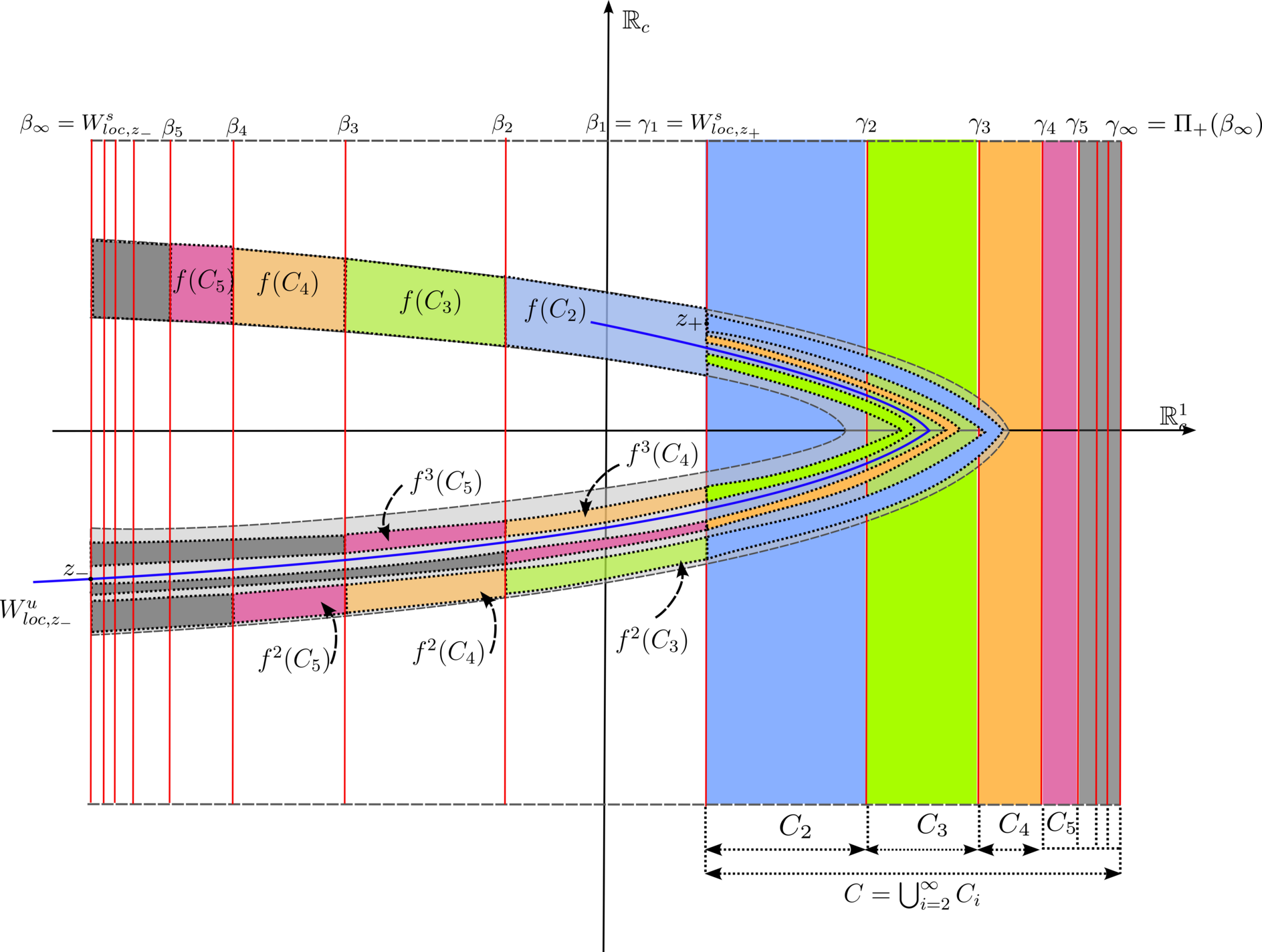}
		\caption{The orbits of domains of first $n$-th return $C_n$ in the orientation reversing case.}
        \label{figure:renormalization-orbits-OR}
	\end{figure}
	\begin{figure}[h!]
		\centering
		\includegraphics[width=\textwidth]{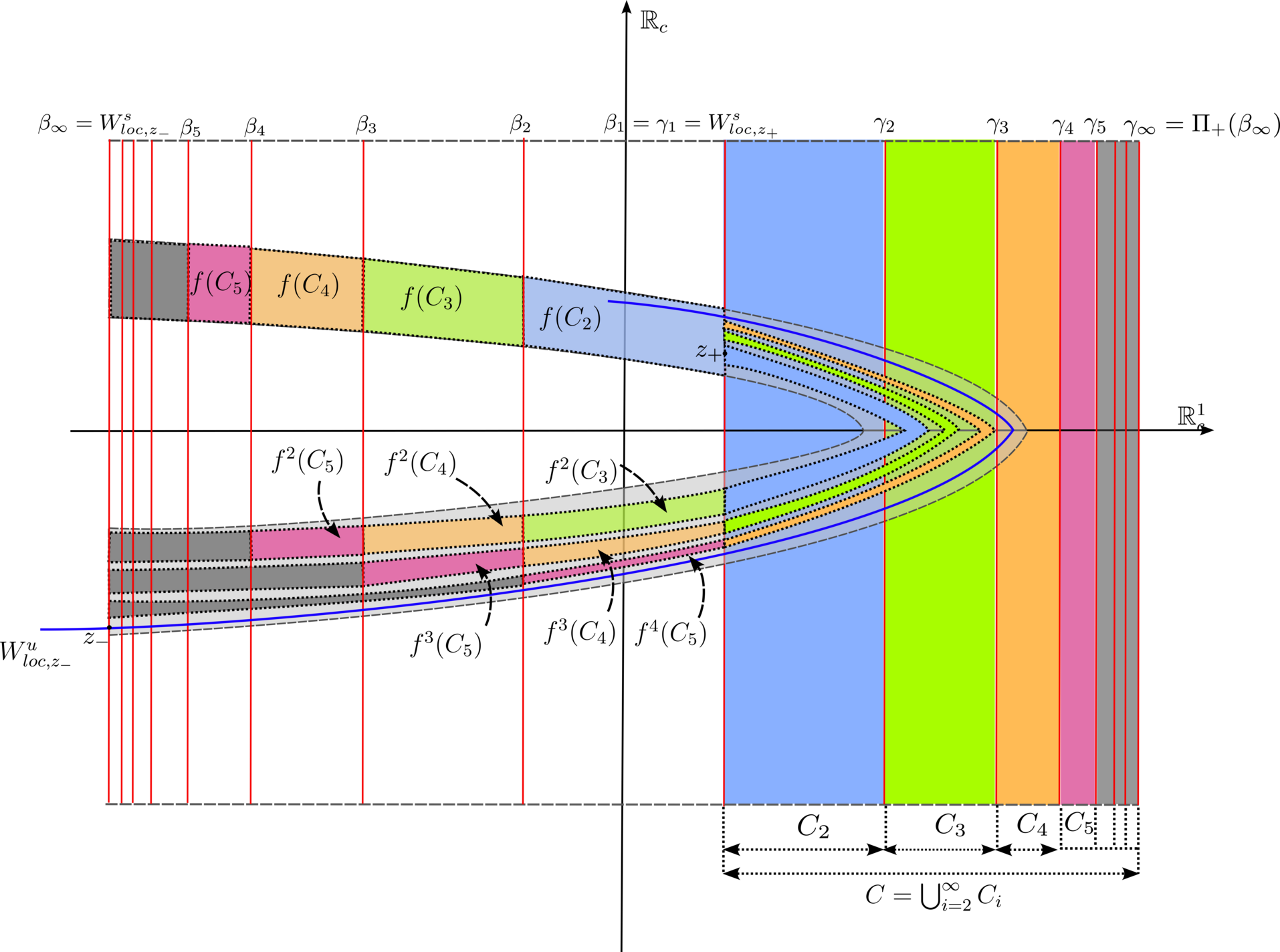}
		\caption{The orbits of domains of first $n$-th return $C_n$ in the orientation preserving case.}
        \label{figure:renormalization-orbits-OP}
	\end{figure}
	\begin{figure}[h!]
		\centering
		\includegraphics[width=\textwidth]{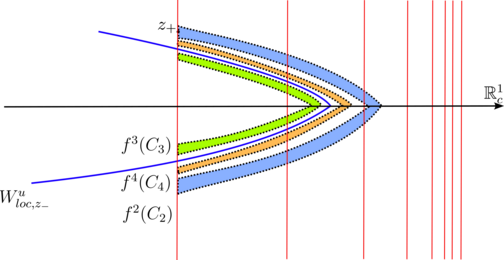}
		\caption{The first return of domains $C_{n}$, the orientation reversing case.}
		\label{figure:renormalization-orbits-OR-C}
	\end{figure}
	\begin{figure}[h!]
		\centering
		\includegraphics[width=\textwidth]{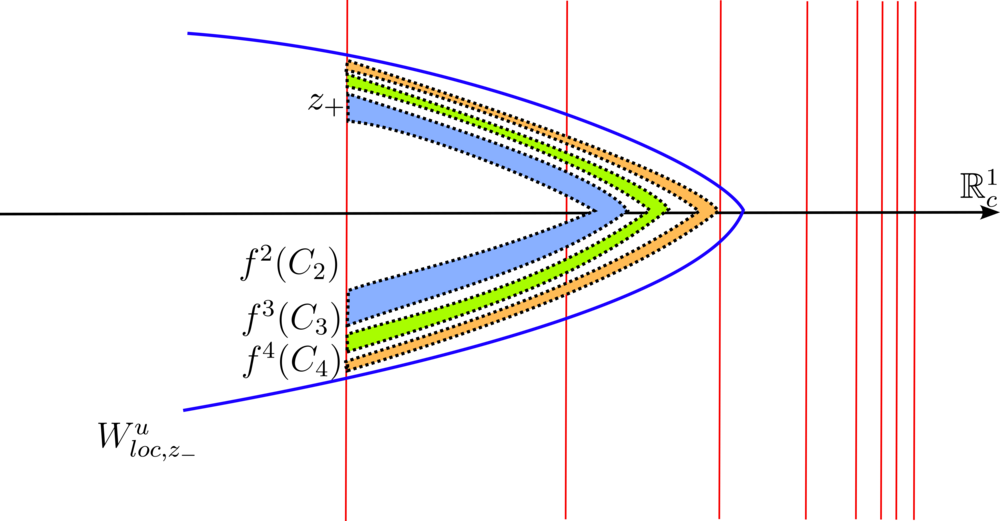}
		\caption{The first return of domains $C_{n}$, the orientation preserving case.}
		\label{figure:renormalization-orbits-OP-C}
	\end{figure}
	\begin{figure}[h!]
		\centering
		\includegraphics[width=\textwidth]{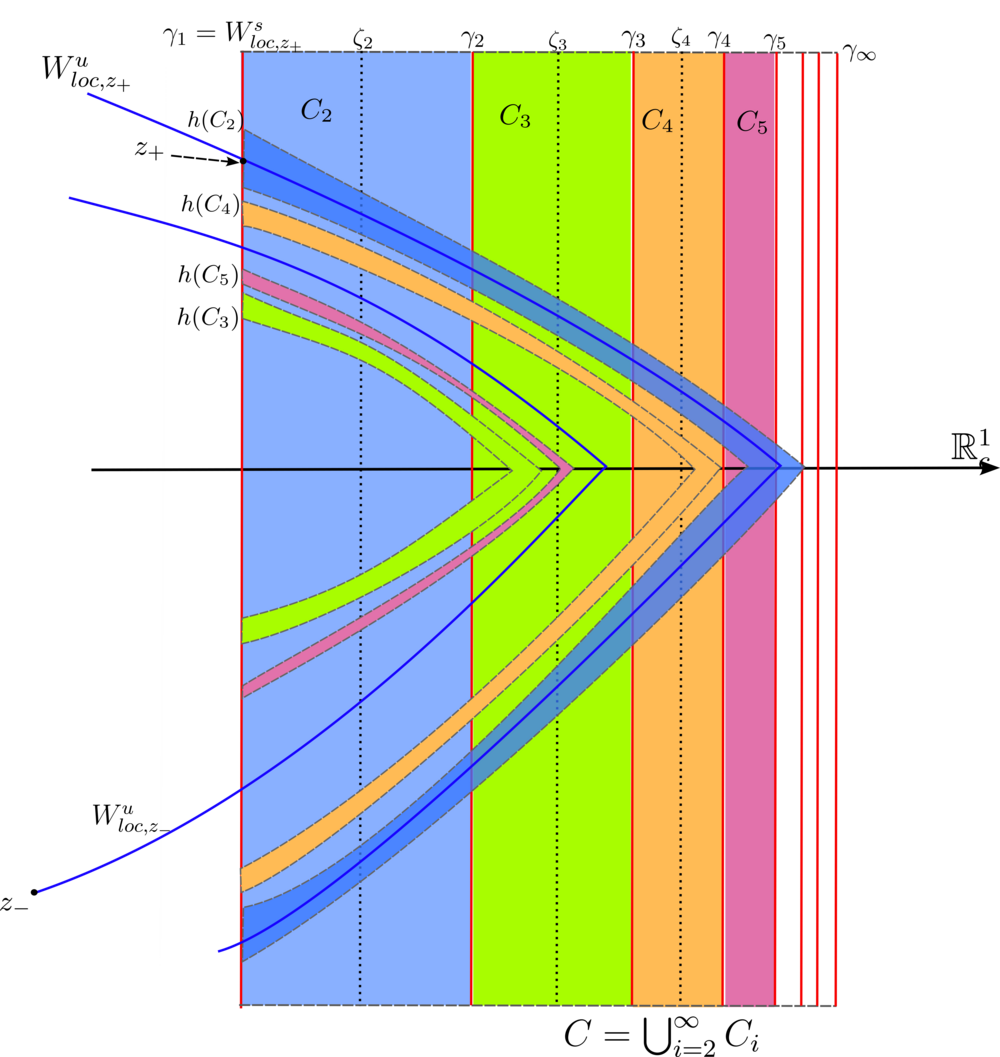}
		\caption{The orientation reversing case: the images $h(C_{i})$ of domains of first return $C_{i}$ accumulate on $f(\UnstableLocMan{z_{-}})$ in an alternating fashion.}
		\label{figure:h(C)-OR}
	\end{figure}
	
	\begin{figure}[h!]
		\centering
		\includegraphics[width=\textwidth]{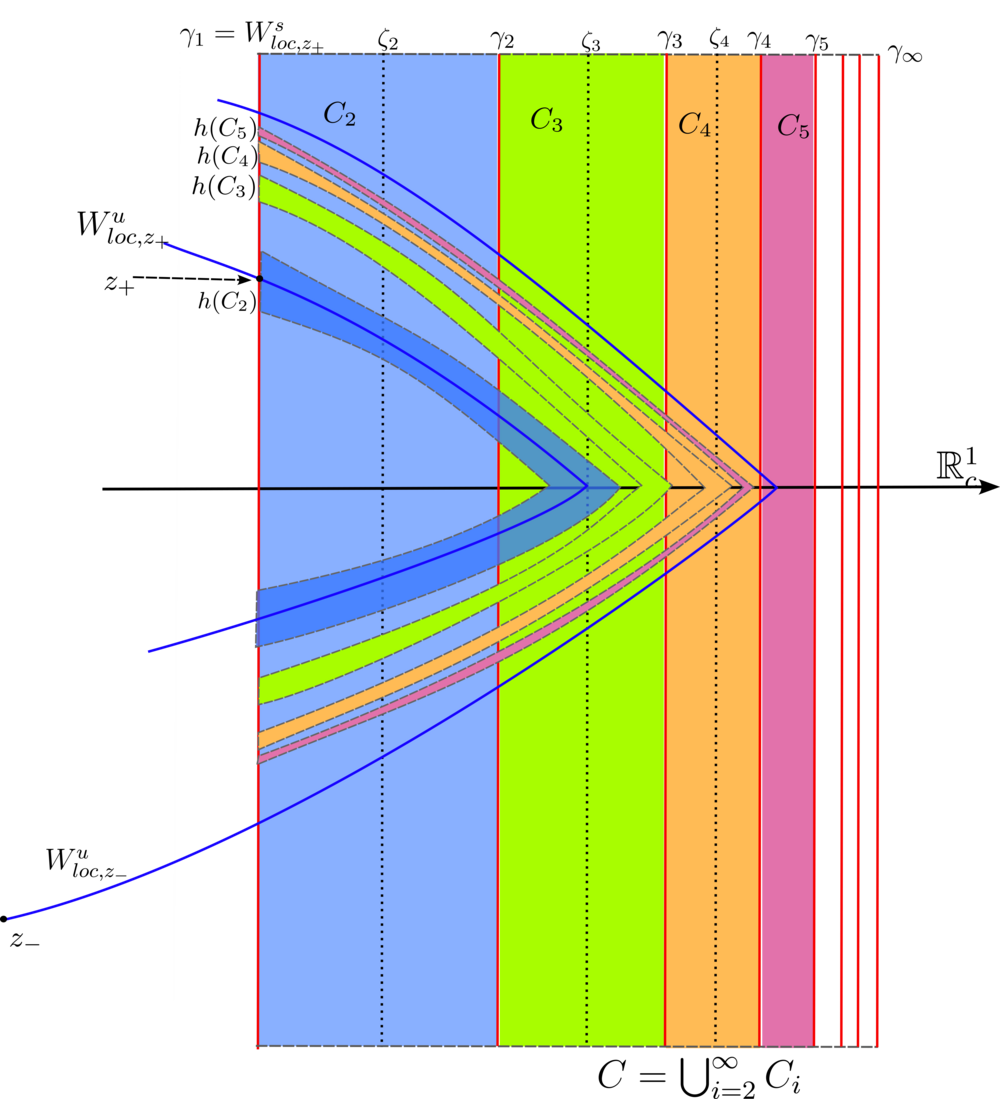}
		\caption{The orientation preserving case: the images $h(C_{i})$ of domains of first return $C_{i}$ accumulate on $f(\UnstableLocMan{z_{-}})$ in an increasing fashion.}
		\label{figure:h(C)-OP}
	\end{figure}
	\section{Maximal attractors}\label{section:maximal-attractors}
	\subsection{Trapping region}
	%
	%
	
	Note that region $Q$ is not a trapping region for $f$, even though it is invariant and $\cl\UnstableMan{z_{+}}\subset Q$ (see Figures \ref{figure:renormalization-orbits-OR} and \ref{figure:renormalization-orbits-OP}). Indeed, it is a rectangle with the left face being $\StableLocMan{z_{-}}$, which is $f$-invariant, and so $\fr Q\cap f(\fr Q)\neq \emptyset$. Nevertheless, it is straightforward to find a trapping region $T$ in $Q$ that contains $\cl\UnstableMan{z_{+}}$. We propose such a region with a rather simple proof once the renormalization model is established. In literature, examples of other trapping regions can be found; see \cite{simpson:robust-chaos,kucharski:str-att, strange-attractor-mis}. The existence of such a trapping region is a step in the proof that the attractor $\Omega:=\bigcap_{i\geq 0}f^{i}(T)$ is maximal. In later sections, we will show that for any trapping region $T\subset Q$, we have $\Omega=\cl\UnstableMan{z_{+}}$. This will imply transitivity on $\Omega$, and will essentially show that $\Omega$ is a topological strange attractor in the sense of Definitions \ref{def:attractor} and \ref{def:strange-attractor}.
	
	We remark here that the assumption of $\cl\UnstableMan{z_{+}}\subset T$ is necessary to show chaotic properties, see Corollary \ref{corollary:stmani-dense-in-R}. That is, the proof of mixing on $\cl\UnstableMan{z_{+}}$ uses the fact that the set of homoclinic intersections is dense in $\cl\UnstableMan{z_{+}}$. If $\Omega$ is strictly greater than $\cl\UnstableMan{z_{+}}$, mixing may fail on $\Omega\setminus\cl\UnstableMan{z_{+}}$. For example, there may be periodic orbits outside $\cl\UnstableMan{z_{+}}$.
	
	\begin{lemma}\label{lemma:trapping-region} For parameters in $A^{b}\times \{b\}$, for $b>0$ sufficiently small, there exists a trapping region $T$ for $f$ with $\cl\UnstableMan{z_{+}}\subset T\subset Q$.
	\end{lemma}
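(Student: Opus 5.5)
The plan is to build $T$ by shaving off a thin neighbourhood of the boundary of $Q$, and to use the renormalization model to control the only delicate part: the left face $\beta_{\infty}=\StableLocMan{z_{-}}\cap Q$, which is $f$-invariant. First, by Lemma \ref{lemma:basic-properties}, $f(Q)\subset Q$. The image $f(Q)$ is a folded strip whose horizontal faces are images of the horizontal faces of $Q$. For $b$ small it lies in an $O(b)$-neighbourhood of $\Ryc$, since $f_{a,0}$ collapses onto $\R\times\{0\}$ and \ref{stepB2} controls the perturbation. Its right tip lies near $f_{a}(c)<q_{+}$, because $a\in A^{b}$ lies to the left of $a_{*,b}$. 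Hence $f(Q)$ stays a definite distance away from the upper and lower faces of $Q$ and from the right face $f^{-1}(\StableLocMan{z_{-}})\cap\Rp$. The set $\cl f(Q)$ meets $\fr Q$ only along $\beta_{\infty}$, near $z_{-}$.

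To handle the left face, I would replace $\beta_{\infty}$ by a nearby $s$-curve to its right. The natural candidates are the curves $\beta_{m}=\Pi_{-}^{m-1}(\beta_{1})\subset\StableMan{z_{+}}$, which accumulate on $\beta_{\infty}$ from the right. They cannot be used directly, since $f(\beta_{m+1})\subset\beta_{m}$ means points on $\beta_{m+1}$ leave the corresponding strip rather than being pushed inward. So I would instead take an $s$-curve $\omega$ on $Q$ lying strictly between $\beta_{\infty}$ and $\beta_{m}$ for large $m$ (for instance inside the strip $S(\beta_{m+1},\beta_{m})$, transverse to nothing problematic). By the contraction of stable cones under $df$ in Lemma \ref{lemma:cone-inv}, the image $f(\omega\cap Q)$ is an $s$-curve that is shorter and closer to $\beta_{\infty}$. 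What matters is the left-right position along $\UnstableLocMan{z_{-}}$. In the left half plane, $f$ acts near $z_{-}$ as a saddle whose unstable direction is horizontal with expansion $|f_{a}'(q_{-})|>1$. A point on $\omega$ at horizontal distance $d$ from $\beta_{\infty}$ is therefore mapped to horizontal distance about $|f_a'(q_-)|d>d$, on the same side when $f|_{\UnstableLocMan{z_-}}$ preserves orientation (Lemma \ref{lemma:position-inv-manifolds-fixed-points}\ref{lemma:position-inv-manifolds-fixed-points:1}). So $f$ maps the strip $S(\beta_{\infty},\omega)\cap Q$ away from $\beta_{\infty}$, and $f(\omega)$ lies strictly to the right of $\omega$ near the bottom part of $Q$, where $f(Q)$ actually lives.

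Set $T$ to be the open rectangle bounded by $\omega$ on the left, an $s$-curve slightly to the left of the right face of $Q$, and two $u$-curves slightly inside the horizontal faces of $Q$. With this choice, $f(\cl T)\subset f(Q)\setminus S(\beta_{\infty},\omega)$, which lies in the interior of $T$. For the containment $\cl\UnstableMan{z_{+}}\subset T$: $\UnstableLocMan{z_{+}}\subset T$ provided $\omega$ is chosen to the left of $z_{+}$ and of the leftmost point of $f(Q)\cap\Ryc$ that is reached by $\UnstableMan{z_{+}}$. Then $\UnstableMan{z_{+}}\subset\bigcup_n f^{n}(T)\subset T$, and closedness gives $\cl\UnstableMan{z_{+}}\subset\cl f(T)\subset T$.

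The main obstacle is the second step: showing that $f(\omega)$ lies strictly inside $T$. This is a problem because $f(Q)$ genuinely reaches $\beta_{\infty}$: points of $f(Q)$ near $z_{-}$ come from near $z_{-}$. The fix I would try is to choose $\omega$ so that $\UnstableLocMan{z_{-}}\cap\omega$ lies at a point $w$ with $\pi_1(w)<\pi_1(\UnstableMan{z_{+}}\cap\UnstableLocMan{z_{-}}$-neighbourhood$)$. The cone estimates then give uniform horizontal expansion in $Q\cap\Rn\cap B_{r}(z_{-})$. Combined with the fact that $f(Q)\cap\Rn$ is $O(b)$-thin vertically, this yields $f(\omega\cap Q)$ strictly to the right of $\omega$ within $f(Q)$.
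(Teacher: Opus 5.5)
There is a genuine gap at the \emph{right} face of $T$. You assume that points of $f(Q)$ near $\beta_\infty$ come only from near $z_{-}$. But the right face of $Q$ is $\gamma_\infty=\Pi_{+}(\beta_\infty)=f^{-1}(\StableLocMan{z_{-}})\cap\Rp$, and $f(\gamma_\infty)\subset\beta_\infty$, near $z_{-}$. So $f_{+}$ maps the thin strip between $\Pi_{+}(\omega)$ and $\gamma_\infty$ into the very strip $S(\beta_\infty,\omega)$ you want $T$ to avoid. You take the right face $\eta$ ``slightly to the left'' of $\gamma_\infty$ with no reference to $\omega$. If $\eta$ ends up between $\Pi_{+}(\omega)$ and $\gamma_\infty$, then $f(\eta)$ lies strictly between $\beta_\infty$ and $\omega$, and $f(\cl T)\not\subset T$. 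Hence your key inclusion $f(\cl T)\subset f(Q)\setminus S(\beta_\infty,\omega)$ is unjustified. The fix in your last paragraph (horizontal expansion near $z_{-}$) concerns only the left branch and cannot repair this. The two vertical faces must be coupled: $\eta$ must lie to the right of the tip of $f(Q)$ but to the left of $\Pi_{+}(\omega)$. This is exactly what the paper's choice $T=S(\beta_{k+1},\gamma_k,Q)$ with $k=\min\{i\colon f(Q)\cap C_i=\emptyset\}$ achieves. There $f(\gamma_k)\subset\beta_k$, which is interior to $T$, and $f(B_1)=f(Q)\cap C\subset\bigcup_{r=2}^{k-1}C_r$ keeps the tip away from $\gamma_k$.

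Three further points. First, your reason for discarding $\beta_{m+1}$ as a left face is backwards. We have $f(\beta_{m+1})\subset\beta_m$, and $\beta_m$ lies to the right of $\beta_{m+1}$, i.e.\ inside the strip, so $\beta_{k+1}$ serves directly as the left face. The paper then checks trapping combinatorially. It decomposes $T$ into the strips $B_i=S(\beta_i,\beta_{i+1},Q)$, $1\le i\le k$, and $C_j$, $2\le j\le k$. Each piece is mapped into the interior of the union of neighbouring pieces, because its vertical faces are mapped into $\beta$-curves. Second, this also replaces your local saddle estimate. That estimate only controls the part of $\omega$ in a small ball around $z_{-}$, although $\omega$ spans the full height of $Q$. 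From $\omega\subset S(\beta_{m+1},\beta_m,Q)$ one gets directly that $f(\omega)$ lies between $\beta_m$ and $\beta_{m-1}$, hence to the right of $\omega$. Third, $f(\omega)$ is in general not an $s$-curve. Lemma~\ref{lemma:cone-inv} gives invariance of stable cones under $df^{-1}$, not $df$; for H\'enon-like maps, vertical segments are mapped to horizontal ones. Only its left--right position matters.

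The remaining parts of your argument are fine: the horizontal faces, and $\cl\UnstableMan{z_{+}}\subset T$ via $\UnstableLocMan{z_{+}}\subset T$ plus forward invariance.
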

	\begin{proof}
		Let $k:=\min\{i\in\N\colon f(Q)\cap C_{i}=\emptyset \}$. Note that by Lemma \ref{lemma:basic-properties} $k$ is well defined, see Figure \ref{fig:renormalization}. Recall that $\beta_{i'}$ and $\gamma_{i}$, for $i',i=1,...$, are curves on $Q$, and so each pair $\beta_{i'}$, $\gamma_{i}$ defines a strip $S(\beta_{i'},\gamma_i,Q)$, see Definition \ref{definition:strip}. Let $T=S(\beta_{k+1},\gamma_k,Q)$. We now show that $T$ is a trapping region containing $\cl\UnstableMan{z_{+}}$.
		
		Let us denote $B_{i}=S(\beta_{i},\beta_{i+1},Q)$, for $i\geq 1$, and $B_0:=C_2$. Recall $C_{i}=S(\gamma_{i-1},\gamma_{i},Q)$, for $i\geq 2$. First, note that \begin{equation}\label{lemma:trapping-region:eq:2}
			T=\bigcup_{1\leq i\leq k}B_{i}\cup \bigcup_{2\leq j\leq k}C_{j}.
		\end{equation} Moreover, as curves $\gamma_{j}$ are pullbacks of $\beta_{j}$, that is, $\gamma_{j}=f^{-1}(\beta_{j}\cap f(Q\cap \Rp))$, we have \begin{multline}\label{lemma:trapping-region:eq:1}
			f(C_{j})=S\left(\beta_{j-1}\cap f(Q\cap \Rp),\beta_{j}\cap f(Q\cap \Rp),f(Q\cap \Rp)\right)\subset \\\beta_{j-1}\cap f(Q\cap \Rp)\cup \beta_{j}\cap f(Q\cap \Rp)\cup \inter B_{j-1}.
		\end{multline} Note $\beta_{i}\cap f(Q)\subset \inter (B_{i}\cup B_{i-1}) $, for $i=2,...$; see Figure \ref{fig:renormalization}. Consequently, taking \eqref{lemma:trapping-region:eq:1} into account, we have $f(C_{j})\subset \inter (B_{j-2}\cup B_{j-1}\cup B_{j})$. Taking the union over $j=2,...,k$, we obtain \[
		f(\bigcup_{j=2}^{k}C_{j})\subset \inter \bigcup_{i=0}^{k}B_{i}=\inter \left(\bigcup_{i=1}^{k}B_{i}\cup C_2        \right).
		\]From the definition \eqref{lemma:trapping-region:eq:2} of $T$, it follows that, in order to show $f(T)\subset \inter T$, it remains to prove $f(B_{i})\subset \inter T$, for every $i=1,...,k$.
		
		As before, by the definition of curves $\beta_{i}$, if $i>2$, we have $f(B_{i})\subset \inter (B_{i-2}\cup B_{i-1}\cup B_{i})\subset \inter T$. If $i=2$, similarly we have $f(B_{2})\subset \inter (C_{2}\cup B_{1}\cup B_{2})\subset \inter T$. The case of $i=1$, that is, $f(B_{1})\subset \inter T$ follows from the definition of $k$. That is, we see $f(Q)\cap C\subset \bigcup_{r=2}^{k-1}C_{r}$. In fact, by the $f$-invariance of $Q$ and the uniform contraction on $\gamma_{1}=\StableLocMan{z_{+}}$ we have \[
		f(Q)\cap C\subset \inter\gamma_{1}\cup \inter\gamma_{k-1}\cup\inter\bigcup_{r=2}^{k-1}C_{r},
		\]where for a curve $\gamma$ we denote by $\inter\gamma$ the curve without the endpoints. We now note that $f(B_{1})=f(Q)\cap C$ and $\inter\gamma_{1}\subset \inter (C_{2}\cup B_{1})$, $\inter\gamma_{k-1}\subset \inter (C_{k-1}\cup C_{k})$. In consequence, $f(B_{1})\subset \inter ( C_{2}\cup B_{1}\cup C_{k-1}\cup C_{k})\subset \inter T$, which concludes the proof of Lemma \ref{lemma:trapping-region}.
	\end{proof}
	
	\subsection{A geometric condition for maximality}
	This section introduces the conditions that imply that the omega limit set of a given point is contained in $\cl\UnstableMan{z_{+}}$. Later, this will be essential in showing that whenever the critical point $c$ of the one dimensional map $f_{a_{0}}$, with $a_{0}\in A$, is eventually mapped to the fixed $z_{+}$, we have $\Omega=\cl\UnstableMan{z_{+}}$ for some parameters $(a,b)$ close to $(a_{0},0)$. As all orbits of the points in $Q$ visit $C$ at some point, we will restrict our attention to the renormalization map $h\colon C\to  C$. Furthermore, we will deal separately with two kinds of point $z\in Q$, those whose orbits omit $C_{\delta}$, and those whose do not. We will show that if a point $z$ has orbit disjoint with $C_{\delta}$, its local stable manifold $\StableLocMan{z}$ is almost vertical. In the proof of maximality in Theorem \ref{mainthm:maximal-attractors}, it will be used to show that $\StableLocMan{z}\cap\UnstableMan{z_{+}}\neq\emptyset$, implying $\omega(z)\subset\cl \UnstableMan{z_{+}}$. For the second kind, we will show that for an open set of parameters $a\in A$ for which $f_{a}(c)$ is close to the stable set of $z_{+}$, $C_{\delta}$ eventually is mapped under $f_{a,b}$ to a region bounded by curves contained in $\StableMan{z_{-}}$ and $\UnstableMan{z_{+}}$, for any $(a,b)$ close to $(a,0)$ and $\delta$ small enough. The following lemma can be deduced from Theorem 4 in \cite{henon-dynamics-of}.
	
	\begin{lemma}\label{lemma:manifold-bounded-region->max}
		Let $T\subset C$ be a disk with $\fr T\subset \StableMan{z_{+}}\cup\UnstableMan{z_{+}}$. Then for any $P\subset C$ for which there exists $k\in\N$ with $h^{k}(P)\subset T$, we have $\omega_{h}(P)\subset\cl\UnstableMan{z_{+}}$.
	\end{lemma}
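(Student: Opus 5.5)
The plan is to first reduce the statement to the invariance of the closure of the bounded region: I will show that $\cl T$ is, up to finitely many iterates, mapped into itself by $h$. After that I will identify $\bigcap_n h^n(\cl T)$ with a subset of $\cl\UnstableMan{z_{+}}$. Since $h^{k}(P)\subset T$, we have $\omega_h(P)=\omega_h(h^k(P))\subset\bigcap_{n\ge 0}\cl\bigcup_{m\ge n}h^{m}(T)$. So it suffices to prove that every accumulation point of $h^{m}(T)$, as $m\to\infty$, lies in $\cl\UnstableMan{z_{+}}$.

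The main step is a Jordan-curve/boundary argument. The boundary $\fr T$ consists of finitely many arcs, some in $\StableMan{z_{+}}$ and some in $\UnstableMan{z_{+}}$. Here I will assume, as in \cite{henon-dynamics-of}, that $\fr T$ is a finite union of such arcs, i.e. a piecewise smooth Jordan curve.
\begin{itemize}
\item Since $h$ is a composition of the homeomorphism $f$, the image $h^{m}(T)$ is again a disk (restricting to the appropriate pieces $C_i$ along orbits). Its boundary is $h^m(\fr T)$, which consists of arcs $h^m(\sigma_s)\subset\StableMan{z_{+}}$ and $h^m(\sigma_u)\subset\UnstableMan{z_{+}}$.
\item The stable arcs are contracted. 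A compact arc $\sigma_s\subset \StableMan{z_{+}}$ satisfies $f^{n}(\sigma_s)\to z_{+}$ uniformly as $n\to\infty$. This follows from the local stable manifold of Theorem \ref{theorem:inv-manifolds-hyp-set}: some fixed iterate puts $\sigma_s$ inside $\StableLocMan{z_{+}}$, which then contracts exponentially. Hence $h^m(\fr T)$ converges in the Hausdorff sense into $\UnstableMan{z_{+}}\cup\{z_{+}\}\subset\cl\UnstableMan{z_{+}}$.
\item Meanwhile the unstable arcs stay inside $\UnstableMan{z_{+}}$ by invariance.
\end{itemize}
So the boundaries of $h^m(T)$ accumulate only on $\cl\UnstableMan{z_{+}}$.

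Next I will pass from boundaries to interiors. This uses the area-type fact that the forward images of $T$ have diameter of their interiors controlled. Concretely, if a point $w$ of $h^m(T)$ stays at distance $\ge\eta$ from $h^m(\fr T)$, then a ball $B_\eta(w)\subset h^m(T)$. Pulling back, $T$ contains $h^{-m}(B_\eta(w))$, whose unstable (almost horizontal) width is at least $\eta$ times the backward contraction along the $u$-direction. I will instead argue forward. Take an almost horizontal $u$-segment through a preimage point inside $T$ and iterate it. By Lemma \ref{lemma:cone-inv} it grows exponentially while staying a $u$-curve as long as it avoids $C_\delta$. A long $u$-curve inside a disk must hit the boundary, and only the unstable part of the boundary is available as the stable part shrinks to $z_{+}$. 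The exit would then force the curve to come within a distance of $\UnstableMan{z_{+}}$ that tends to zero.

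Points where the segment enters $C_\delta$ are handled by folding: the image is then split into two $u$-pieces, one of which retains at least half the expansion, as in Lemma \ref{proposition:st-mani-X-intersects-arcs}. I expect this step to be the main obstacle: controlling the interior of $h^m(T)$ in the non-uniformly hyperbolic regime. If it resists, I will fall back on citing Theorem 4 of \cite{henon-dynamics-of}. That result gives exactly that a region bounded by stable and unstable arcs of $z_{+}$ has $\omega$-limit set in the closure of the unstable manifold, using dissipativity: $|\det df|\le Cb<1$ forces $\operatorname{area}(h^m(T))\to 0$. Then every point of $h^m(T)$ is close to its boundary, because a disk of small area bounded by curves of bounded geometry is thin. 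Combining this with the boundary convergence gives $\omega_h(P)\subset\cl\UnstableMan{z_{+}}$.
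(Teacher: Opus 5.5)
\noindent Your mechanism is the right one: stable boundary arcs shrink to $z_{+}$, unstable boundary arcs stay in $\UnstableMan{z_{+}}$, and area contraction forces every image point to lie near the boundary. It is the Benedicks--Carleson argument that the paper only cites (Theorem 4 of \cite{henon-dynamics-of}). The gap is that you run it for the first return map $h$, where the step ``$h^{m}(T)$ is again a disk with boundary $h^{m}(\fr T)$'' is false. The map $h$ is discontinuous along every $\gamma_{i}$, $i\geq 2$. On $\gamma_{i}$ it equals $f^{i}$, and immediately to the right of $\gamma_{i}$ (in $C_{i+1}$) it equals $f^{i+1}$; both one-sided images lie on $\gamma_{1}=\StableLocMan{z_{+}}$. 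The disks the lemma is applied to meet many strips $C_{i}$; for example, $W$ in Lemma \ref{lemma:fixedpoint-in-omega(c)->maximality} extends from $\gamma_{1}$ across $C_{2},C_{3},\dots$. So $h(T)$ is a union of pieces $f^{i}(T\cap C_{i})$ whose boundaries contain the arcs $f^{i}(T\cap\gamma_{i})\subset\gamma_{1}$, and these are not contained in $h(\fr T)$. Arcs of this kind are created afresh at every iterate. Your contraction argument for $h^{m}(\fr T)$ only handles arcs that have been iterated $m$ times, so it says nothing about them. Moreover, for points of $h^{m}(T)$ near such an arc, the inscribed-ball estimate taken relative to $h^{m}(\fr T)$ fails: the other side of the arc is generally not covered by $h^{m}(T)$.

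\noindent The repair is to do everything with $f$, which is a homeomorphism of the plane. Then $f^{N}(T)$ is a closed disk with $\fr f^{N}(T)=f^{N}(\fr T)$. If $w\in f^{N}(T)$ satisfies $d(w,f^{N}(\fr T))\geq\eta$, then $B_{\eta}(w)\subset f^{N}(T)$, so $\eta\leq(\kappa^{N}\operatorname{area}(T)/\pi)^{1/2}$, where $\kappa<1$ bounds $|\det df|$ on the forward invariant set $Q$. This is the dissipativity also used in Lemma \ref{lemma:z-omits-cdelta->almost-vert-stmani}, and no bounded-geometry hypothesis is needed. Combined with $f^{N}(\sigma_{s})\to z_{+}$ uniformly and $f^{N}(\sigma_{u})\subset\UnstableMan{z_{+}}$, this places $f^{N}(T)$ in a neighbourhood of $\cl\UnstableMan{z_{+}}$ whose radius tends to $0$ as $N\to\infty$. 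Since $h^{j}(y)=f^{N}(y)$ with $N\geq j$, you get $\bigcup_{m\geq n}h^{m}(P)\subset\bigcup_{N\geq n-k}f^{N}(T)$, hence $\omega_{h}(P)\subset\cl\UnstableMan{z_{+}}$.

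\noindent Also drop the forward $u$-curve route, for two reasons. Its folding step borrows the half-expansion argument of Lemma \ref{proposition:st-mani-X-intersects-arcs}, which needs \ref{stepA2-hyp} and \ref{stepA2-expansion}; these are not available here, since the lemma is used in Theorem \ref{mainthm:maximal-attractors} for general admissible families. And forward images of a curve in $T$ never leave the forward images of $T$, so growth of length alone does not place a given point near the boundary.
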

	Note that Lemma \ref{lemma:manifold-bounded-region->max} and Proposition \ref{proposition:multi-renorm} immediately implies maximality for $\cl\UnstableMan{z_{+}}$ in the case of $f$ orientation reversing ($\epsilon=-1$). We formulate this result under the assumption of small perturbation, that is, small $b$, since for admissible families, it is a necessary assumption to deduce the existence of a trapping region. However, note that in many examples such as Lozi maps or border collision normal form family, the result holds in a much larger parameter region.
	\begin{lemma}\label{lemma:maximality-OR}
		For any $b$ small enough, there are open sets $\tilde A^{b}\subset A^{b}$ such that for parameters in $\tilde A^{b}\times \{b\}$, $\Omega=\cl\UnstableMan{z_{+}}$, if $f$ is orientation reversing.
	\end{lemma}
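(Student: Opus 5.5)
We show that every point of the trapping region $T$ from Lemma \ref{lemma:trapping-region} has its $\omega$-limit set in $\cl\UnstableMan{z_{+}}$. Then $\Omega=\bigcap_{i\geq0}f^{i}(T)\supset\cl\UnstableMan{z_{+}}$ by invariance, and the reverse inclusion follows from a compactness argument on the maximal invariant set. Since every orbit in $Q$ enters $C$, it suffices to work with the first return map $h\colon C\to C$ of Proposition \ref{proposition:first-return}. We split the points $z\in T$ into two kinds, as announced at the beginning of this section: those whose orbit avoids $C_{\delta}$, and those whose orbit enters $C_{\delta}$.

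\emph{First kind.} If the forward orbit of $z$ avoids $C_{\delta}$, then Lemma \ref{lemma:cone-inv} and the argument of Proposition \ref{proposition:hyperbolic-set-admissible-family} (forward part only) give an almost vertical local stable manifold $\StableLocMan{z}$ of uniform length. In $C$ such an $s$-curve crosses the folded $u$-curves $f(\UnstableLocMan{z_{+}})$ and the images $h(C_{i})$, which are bounded by $u$-curves of $\UnstableMan{z_{+}}$. Hence $\StableLocMan{z}\cap\UnstableMan{z_{+}}\neq\emptyset$, which gives $\omega(z)\subset\cl\UnstableMan{z_{+}}$. I expect a small adjustment is needed so that the length of $\StableLocMan{z}$ dominates the widths of the relevant strips; this can be arranged by shrinking $b$.

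\emph{Second kind.} Here we use the orientation reversing structure. Proposition \ref{proposition:multi-renorm} with $\epsilon=-1$ shows that the images $h(C_{i})$ accumulate on $f(\UnstableLocMan{z_{-}})$ alternately from both sides, because $-(-\lambda)^{i-1}$ alternates in sign. Choose the parameter set $\tilde A^{b}$ so that $f_{a,b}(C_{\delta}\cap T)$ lands deep inside some $C_{i}$ with $i$ large, near $\StableLocMan{z_{-}}$; this is possible for $a$ slightly below $a_{*,b}$, by continuity in $a$ and openness. Consider two consecutive images $h(C_{i})$ and $h(C_{i+1})$ on opposite sides of $f(\UnstableLocMan{z_{-}})$. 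Together with arcs of $\StableMan{z_{+}}$ (the faces $\beta_{j}$, $\gamma_{j}$), their $u$-faces in $\UnstableMan{z_{+}}$ bound a disk $D\subset C$ with $\fr D\subset\StableMan{z_{+}}\cup\UnstableMan{z_{+}}$. This disk contains $h(C_{m})$ for all $m\geq i$, and in particular contains $h$ of the returns of $C_{\delta}$. Lemma \ref{lemma:manifold-bounded-region->max} then gives $\omega_{h}(z)\subset\cl\UnstableMan{z_{+}}$ for every $z$ of the second kind, since some iterate lies in $D$.

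The main obstacle is the construction of the disk $D$: we must check that the boundary arcs of consecutive $h(C_{i})$ really close up into a Jordan curve in $\StableMan{z_{+}}\cup\UnstableMan{z_{+}}$ enclosing the deep returns. I would first try to carry this out using the signed distances $\theta_{1,m}$ from the proof of Proposition \ref{proposition:multi-renorm}, following Figure \ref{figure:h(C)-OR}. The openness of $\tilde A^{b}$ then follows because all the conditions used are open in $a$ for fixed small $b$.
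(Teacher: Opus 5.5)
Your second case, which is the heart of the lemma, does not work as described, and the step you leave open is exactly the one that matters.

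\emph{First error.} The $u$-faces of $h(C_{i})$ are not contained in $\UnstableMan{z_{+}}$. The horizontal faces of $C_{i}=S(\gamma_{i-1},\gamma_{i})$ lie on the horizontal boundary of $Q$, so their images are not unstable arcs of $z_{+}$. Your first case relies on the same false claim. The arcs of $\UnstableMan{z_{+}}$ actually available are the $h$-images of arcs of $\UnstableLocMan{z_{+}}$ that cross $C_{i}$ from $\gamma_{i-1}$ to $\gamma_{i}$. Such arcs exist only for strips lying to the left of the fold of $f(Q)$, and each image is a U-shaped arc inside $h(C_{i})$ with both endpoints on $\gamma_{1}=\StableLocMan{z_{+}}$.

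\emph{Second error.} The images $h(C_{m})$ are disjoint folded strips attached to $\gamma_{1}$, nested according to where their tips meet $\Ryc$. A band between the arcs in $h(C_{i})$ and $h(C_{i+1})$ therefore contains $h(C_{m})$ only for $m\geq i+2$, not for all $m\geq i$. With your indexing, where $f(C_{\delta})$ lands in $C_{i}$, two things go wrong. The returns of $C_{\delta}$ lie in a boundary strip, not in $D$. And the fold lies in $C_{i}$, so $\UnstableLocMan{z_{+}}$ does not cross $C_{i}$ at all.

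\emph{Missing observation.} For $\epsilon=-1$, the order in \eqref{proposition:multi-renorm:i3:2} is given by the sequence $\lambda,-\lambda^{2},\lambda^{3},\dots$, which attains its maximum at $i=2$. So $h(C_{2})$ is the outermost image. A single disk, bounded by $\gamma_{1}$ and an arc $\tau\subset h(C_{2})\cap\UnstableMan{z_{+}}$, already contains $h(C_{j})$ for every $j\geq 3$. You do not need two consecutive strips on opposite sides of $f(\UnstableLocMan{z_{-}})$.

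\emph{How the paper argues.} It splits $z\in C$ according to whether its $h$-orbit stays in $C_{2}$ forever or enters some $C_{j}$ with $j\geq 3$.
\begin{itemize}
\item If the orbit enters some $C_{j}$, one application of Lemma \ref{lemma:manifold-bounded-region->max} finishes.
\item If it stays in $C_{2}$, the paper chooses $\tilde A^{b}$ by the horseshoe condition $\sup\pi_{1}(C_{2}\cap\Ryc)<\inf\pi_{1}(h(C_{2})\cap\Ryc)$. This condition also guarantees that $\UnstableLocMan{z_{+}}$ crosses $C_{2}$, so $\tau$ exists. The invariant set of the horseshoe $h|_{C_{2}}$ lies in $\cl\UnstableMan{z_{+}}$.
\end{itemize}
This route needs no localization of $f(C_{\delta})$ deep in some $C_{i}$ and no stable-manifold argument. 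Your first case could be repaired: for $b\ll\delta$, a stable curve of length much larger than $b$ through a point of $f(Q)\cap C$ outside $f(C_{\delta})$ crosses the thin layer of $f(Q)$ and hence $f(\UnstableLocMan{z_{+}})$. In the orientation reversing case, however, it is superfluous.
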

	\begin{proof}
Note that it is enough to show $\Omega\cap C=\cl\UnstableMan{z_{+}}\cap C$. We consider the two cases for $z\in C$ and show that $\omega(z)\subset \cl\UnstableMan{z_{+}}$. The first case is when  $h^{i}(z)\in C_{2}$ for every $i\geq 0$, and the second case is when $h^{i}(z)\in C_{j}$, for some $i\geq 0$ and $j\geq 3$. We consider the parameters in a subset $\tilde A^{b}\times\{b\}\subset A^{b}\times\{b\}$ for which we have $\sup\pi_{1}(C_{2}\cap\Ryc)< \inf\pi_{1}(h(C_{2})\cap\Ryc)$, that is, $h|_{C_{2}}\colon C_{2}\to h(C_{2})$ creates a horseshoe. First, assume $h^{i}(z)\in C_{2}$ for every $i\geq 0$. Then $\omega(z)$ is contained in the non-wandering set of the horseshoe $h|_{C_{2}}$. Note that $C_{2}\cap h(C_{2})$ is a union of two rectangles. Each rectangle has its stable faces connected by an arc of $\UnstableMan{z_{+}}$. Consequently, the non-wandering set of $h|_{C_{2}}$ is contained in $\cl \UnstableMan{z_{+}}$, and so is also $\omega(z)\in \cl \UnstableMan{z_{+}}$.
		
		Let us now consider the case of $h^{i}(z)\in C_{j}$, for some $i\geq 0$ and $j\geq 3$. By Proposition \ref{proposition:multi-renorm} we have $\pi_{1}(h(C_{j})\cap \Ryc)\leq \pi_{1}(h(C_{2})\cap \Ryc)$. Moreover, the images of stable faces of $C_{2}$, that is, the connected components of $\StableLocMan{z_{+}}\cap h(C_{2})$ are connected by an arc $\tau$ contained in $\UnstableMan{z_{+}}\cap h(C_2)$, see Figure \ref{figure:h(C)-OR}. It follows that $h(C_{j})$ is contained in a region $\tilde C$ bounded by $\StableLocMan{z_{+}}=\gamma_{1}$ and $\tau\subset \UnstableMan{z_{+}}$. Hence, by Lemma \ref{lemma:manifold-bounded-region->max}, we have $\omega(z)\subset \UnstableMan{z_{+}}$, which concludes the proof of Lemma \ref{lemma:maximality-OR}.
	\end{proof}
	In the remainder of the section we will deal with the case of $f$ orientation preserving.
	\begin{lemma}\label{lemma:z-omits-cdelta->almost-vert-stmani}
		Let $a\in A^{b}$, for some $b\in B$. Assume that the family $\{f_{a,b}\}$ is orientation preserving and for some $\delta>0$ and $j\geq 3$ we have $f(C_{\delta})\subset C_{j}$. Then if $z\in \bigcap _{j\geq 0} h^{j}(C)$ satisfies $h^{k}(z)\notin C_{j}$ for $k\in\Z$, there exists a stable manifold $\StableLocMan{h^{-r}(z)}$ that is almost vertical for some $r\in \N_+$ and is an $s$-curve connecting the upper and lower faces of $C$.
	\end{lemma}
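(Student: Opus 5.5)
The plan is to reduce the statement to Proposition \ref{proposition:hyperbolic-set-admissible-family}: the $h$-orbit of $z$ (forward and backward) will be shown to avoid $C_{\delta}$ under $f$. This gives an almost vertical local stable manifold, which we then lengthen to a full $s$-curve on $C$.

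First, the orbit avoids $C_\delta$. Suppose some point $w$ of the full $f$-orbit of $z$ lies in $C_{\delta}$. Then $f(w)\in f(C_{\delta})\subset C_{j}$, and $f(w)$ is some $h^{k}(z)$ because $z\in\bigcap_{j\ge0}h^{j}(C)$. Here we use that $h$ is the first return map to $C$ (Proposition \ref{proposition:first-return}), every orbit in $Q$ visits $C$, and $C_{j}\subset C$. This contradicts $h^{k}(z)\notin C_{j}$. Hence the whole $f$-orbit of $z$ stays in $K\setminus C_{\delta}$ with $K=\cl Q$. Backward iterates exist because $z\in\bigcap h^{j}(C)$ and $f(Q)\subset Q$. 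So $z$ lies in the hyperbolic set $\cc{H}$ of Proposition \ref{proposition:hyperbolic-set-admissible-family}, after possibly shrinking $\delta$ and $b$; shrinking $\delta$ keeps the hypothesis $f(C_{\delta})\subset C_j$. Proposition \ref{proposition:hyperbolic-set-admissible-family} then gives every $h^{-r}(z)$ an almost vertical local stable manifold. The stable cones are invariant under $df^{-1}$ along the orbit by Lemma \ref{lemma:cone-inv}, with expansion rate $\mu$, so the $f^{-n}$-preimages of these manifolds remain $s$-curves.

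Second, the local manifold is lengthened to cross $C$. Since $f$ contracts $s$-curves by $\lambda$, the preimage $f^{-n}(\StableLocMan{f^{n}(y)})$ for $y=h^{-r}(z)$ has length at least $\mu^{n}$ times the uniform size $\beta$ from Theorem \ref{theorem:inv-manifolds-hyp-set}. This holds as long as the preimage stays inside $Q$ and does not meet $\Rc$, where $f^{-1}$ is a diffeomorphism. Take $r$ large enough that $\mu^{\,\mathrm{(return\ time\ of\ }r\mathrm{\ steps)}}\beta$ exceeds the height of $Q$. We then pull back the local stable manifold of $z$ through the $r$ returns. The resulting $s$-curve through $h^{-r}(z)$ either connects the upper and lower faces of $Q$ or exits $Q$ earlier. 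Since $Q$ is forward invariant and its horizontal faces are $u$-curves, the pullback is an $s$-curve on $Q$. We truncate it to $C$.

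The main obstacle is making sure the pullbacks do not hit $\Rc$, so that they stay smooth $s$-curves and are not folded. The first step shows this for the orbit itself, but not for a whole curve around it. I would handle this by noting that an $s$-curve through a point at distance $\geq\delta$ from $\Rc$ may still cross $\Rc$ once it becomes long. So I would stop the pullback at the first time its length reaches the height of the strip, and use that the strip of $Q$ containing $h^{-r}(z)$ is bounded by the $s$-curves $\gamma_{i},\beta_i$. Those curves lie in $\StableMan{z_\pm}$ and are not crossed by other stable curves, since stable manifolds are disjoint. The pulled-back curve is therefore trapped between them and must run from the lower to the upper face of $C$.
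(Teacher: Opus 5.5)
Your first step is correct and is how the paper begins: the full $f$-orbit of $z$ misses $C_{\delta}$, so $z$ lies in the hyperbolic set of Proposition~\ref{proposition:hyperbolic-set-admissible-family} and has an almost vertical local stable manifold. The gap is in the lengthening step, at exactly the point you flag, and your remedy does not close it.

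\emph{Stopping early gives no control.} By the last estimate in the proof of Lemma~\ref{lemma:cone-inv}, one backward step stretches almost vertical vectors by at least $(S\rho+Cb)^{-1}$, which is of order $b^{-1}$. So ``stop the pullback at the first time its length reaches the height of the strip'' happens after very few steps, at a time that is in general not a return time. From then on you must keep pulling back arcs much longer than $Q$, which is precisely the regime where you feared folding.

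\emph{Trapping does not prevent folding along the way.} You apply the trapping between stable curves of $z_{\pm}$ only at the final time. Just before each return, the orbit of a point of $C_{i}$ lies in $S(\beta_{1},\beta_{2})$. Since $\beta_{1}\subset\Rp$ and $\beta_{2}\subset\Rn$, this strip contains $Q\cap\Rc$. The cone invariance of Lemma~\ref{lemma:cone-inv} needs the preimage arc to avoid some $C_{\delta'}$; avoiding $\Rc$ itself is not enough. Nothing in your argument provides this. Separately, deducing that the pullback is an $s$-curve on $Q$ from forward invariance of $Q$ is a non sequitur, since forward invariance says nothing about preimages.

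\emph{The missing idea} is to apply the hypothesis $f(C_{\delta})\subset C_{j}$ to the whole arc, not only to the orbit, and to do the trapping at every time. Let $\sigma_{k}$ be the component of $f^{-1}(\sigma_{k+1})\cap Q$ through the corresponding orbit point.
\begin{itemize}
\item Suppose $\sigma_{k+1}$ is confined, by disjointness of stable sets, to the strip containing its orbit point. That strip is $C_{i}$ with $i<j$ at return times and some $S(\beta_{m},\beta_{m+1})$ otherwise, so $\sigma_{k+1}$ misses $C_{j}\supset f(C_{\delta}\cap Q)$.
\item Hence $\sigma_{k}$ misses $C_{\delta}$, is an $s$-curve by Lemma~\ref{lemma:cone-inv}, and is again confined.
\item Its length grows until it joins the horizontal faces of $Q$, and this property persists under further pullbacks.
\end{itemize}
A quantitative alternative also works: the stable cone width tends to $0$ with $b$, so an $s$-curve of bounded length through a point outside $C_{\delta}$ stays outside $C_{\delta/2}$; then truncate at every step. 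In either version, the case $h^{-r}(z)\in\StableMan{z_{+}}$ needs a separate word, because there the stable sets are not disjoint. For example, $z=z_{+}$ satisfies the hypotheses.

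The paper implements the first idea with rectangles instead of curves. The components of $H_{\ell}=h^{\ell}(C^{(j)})\setminus C_{j}$ are unfolded rectangles from $\gamma_{1}$ to $\gamma_{j-1}$, because removing $C_{j}$ cuts off exactly the folds. Area contraction makes the component $\tilde H_{r}\ni z$ thinner than $\StableLocMan{z}$, and $h^{-r}$ then carries the crossing arc to an $s$-curve spanning $C^{(j)}$.
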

	\begin{proof}
		Let $C^{(j)}=\bigcup_{i< j}C_{i}$. We claim that $H_{\ell}:=h^{\ell}(C^{(j)})\setminus C_{j}$ is a disjoint union of rectangles, each rectangle has the left vertical face contained in $\gamma_{1}=\StableLocMan{z_{+}}$ and the right vertical face contained in $\gamma_{j-1}$, which is the left vertical face of $C_{j}$. Note that by $f(C_{\delta})\subset C_{j}$, the claim is true for $\ell=1$. Now, assume the claim for $\ell>1$. Let $\tilde H$ be a connected component of $H_{\ell}$. By assumption, $\tilde H\cap C_{i}$, $i<j$,  is a rectangle with the left face contained in $\gamma_{i-1}$ and the right face in $\gamma_{i}$. Therefore, the orbit segment of length $i$ of every $\tilde H\cap C_{i}$ under $f$ crosses the singularity region $\Rc$ exactly once, and the $f^{i-1}$ images of vertical faces of $\tilde H\cap C_{i}$ intersect $\Rc$ transversally. Hence, $h(\tilde H\cap C_{i})$ is folded along $\Ryc$, and $h(\tilde H\cap C_{i})\cap \Ryc\subset C_{j}$ and so $h(\tilde H\cap C_{i})\setminus C_{j}$ has two connected components, which are rectangles as described before, as required by the claim.
		
		Let $z\in \bigcap _{j\geq 0} h^{j}(C)$ satisfy $h^{k}(z)\notin C_{j}$ for $k\in\Z$. As $f(C_{\delta})\subset C_{j}$, we have $f^{k}(z)\notin C_{\delta}$. By Proposition \ref{proposition:hyperbolic-set-admissible-family}, $z$ has an almost vertical stable manifold. Moreover, $z\in H_{r}$, for every $r\geq 1$. Therefore, there are connected components $\tilde H_{r}$ of $H_{r}$ such that $z\in \tilde H_{r}$. By $\det \diff h <1$, we have $\Leb(\tilde H_{r})\to 0$, as $r\to \infty$, and so there exists $\StableLocMan{z}$ which connects the upper and lower faces of $\tilde H_{r}$, for some $r>1$. The horizontal faces of $\tilde H_{r}$ are images of subcurves of the horizontal faces of $C^{(j)}$. It follows that $h^{-r}(\StableLocMan{z})$ necessarily intersects the upper and lower faces of $C^{(j)}$, and so $h^{-r}(\StableLocMan{z})$ is an $s$-curve on $C\supset C^{(j)}$. By the invariance of stable manifolds, $h^{-r}(\StableLocMan{z})$ is a stable local manifold of $h^{-r}(z)$.
	\end{proof}
	
	\begin{lemma}\label{lemma:fixedpoint-in-omega(c)->maximality}
		Let $a\in A^{b}$, for some $b\in B$. Assume that the family $\{f_{a,b}\}$ is orientation preserving, $f_{a}(c)\in\StableMan{z_{+}}$ and $f_{a,0}(c)\in C_{i}$ for some $i>3$. Then if $(a,b)$ is sufficiently close to $(a,0)$ and $\delta>0$ is small enough, for all $z\in C_{\delta}$ we have $\omega(z)\subset \cl\UnstableMan{z_{+}}$.
	\end{lemma}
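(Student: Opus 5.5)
The plan is to reduce the statement to Lemma \ref{lemma:manifold-bounded-region->max}. Concretely, we will show that for $(a,b)$ near $(a,0)$ and $\delta$ small, some fixed iterate of $C_{\delta}\cap Q$ lands inside a disc $T\subset C$ whose boundary lies in $\StableMan{z_{+}}\cup\UnstableMan{z_{+}}$. The disc comes from the one-dimensional picture. For $b=0$ the point $f_{a}(c)$ lies in $\StableMan{z_{+}}$, so there is a first $N$ with $f_{a}^{N}(c)=z_{+}$. Since $f_{a,0}(c)\in C_{i}$ with $i>3$, the point $f_{a,0}(c)$ lies in the interior of some $C_{i}$, or possibly on one of its faces $\gamma_{i-1},\gamma_{i}\subset\StableMan{z_{+}}$. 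In the planar model this means $f(\Rc)=\Ryc$ crosses the strip $C_{i}$ near the pull-back curve of $\StableLocMan{z_{+}}$ that contains $f_{a,0}(c)$.

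The key geometric step uses the folding structure. For $b>0$ small, $f(C_{\delta})$ is a thin neighbourhood of $\Ryc$ of width $O(\delta+b)$. It therefore lies, near the critical value, inside $C_{i}$, or inside $C_{i}\cup C_{i+1}$ if $f_{a,0}(c)$ sits on a face. Apply the first return map $h$. On $C_{i}$ we have $h=f^{i}$, and $h(C_{i})$ is folded along $\Ryc$ as in Proposition \ref{proposition:first-return}\ref{proposition:first-return:c3}. Its two stable faces map into $\StableLocMan{z_{+}}=\gamma_{1}$. Following Figure \ref{figure:h(C)-OP}, the two image faces are joined by an arc $\tau\subset\UnstableMan{z_{+}}$, namely the image of the stretch of $\UnstableLocMan{z_{+}}$ crossing $C_{i}$. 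Together, $\tau$ and a subarc of $\gamma_{1}$ bound a disc $T_{i}\subset C$ with $\fr T_{i}\subset\StableMan{z_{+}}\cup\UnstableMan{z_{+}}$. The claim to prove is that $h(f(C_{\delta}))$, or $h^{2}(f(C_{\delta}))$ in the face case, lies in such a $T_{i}$ or in $T_{i}\cup T_{i+1}$.

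The hypothesis $f_{a}(c)\in\StableMan{z_{+}}$ enters here. The critical value is carried by $f^{i}$ essentially onto $\StableLocMan{z_{+}}$, so the image of a small neighbourhood of it is a thin folded region hugging $\gamma_{1}$. The fold is near $z_{+}$ along $\UnstableLocMan{z_{+}}$, and this region is enclosed by $\tau$. The condition $i>3$ is used for two things. It keeps $h(C_{i})$ away from the accumulation of the sets $h(C_{j})$ on $f(\UnstableLocMan{z_{-}})$ described in Proposition \ref{proposition:multi-renorm}. It also guarantees that $\tau$ exists as an honest arc of $\UnstableMan{z_{+}}$ inside $C$ rather than escaping through $\gamma_{\infty}$.

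Once $h^{k}(f(C_{\delta})\cap C)\subset T_{i}\cup T_{i+1}$ for a fixed $k$, Lemma \ref{lemma:manifold-bounded-region->max} gives $\omega_{h}(z)\subset\cl\UnstableMan{z_{+}}$ for the points involved. Since $h$ is the first return map and $\cl\UnstableMan{z_{+}}$ is $f$-invariant, this transfers to $\omega(z)\subset\cl\UnstableMan{z_{+}}$ for $z\in C_{\delta}$. Points of $C_{\delta}$ not yet in $C$ enter $C$ after a bounded number of steps, by the partition of $Q$ into the strips $B_{m}$ and $C_{m}$. The smallness requirements are met in order: first choose $k$ and the disc at $b=0$, then take $\delta$ and $b$ small by continuity of finitely many iterates (Proposition \ref{proposition:inv-manifolds-endo2} and \ref{stepB2}).

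The main obstacle is verifying this containment robustly. The one-dimensional data is degenerate, since $f^{N}_{a}(c)=z_{+}$ sits exactly on the boundary curve $\gamma_{1}$ of $T_{i}$. We must show that the perturbed fold of $f^{N}(C_{\delta})$ does not poke out across $\gamma_{1}$ or across $\tau$. The first thing to try is to use that near $z_{+}$ the map is hyperbolic with orientation reversal on $\UnstableLocMan{z_{+}}$ (Lemma \ref{lemma:position-inv-manifolds-fixed-points}). If the fold slips slightly to the wrong side of $\gamma_{1}$, one more iterate flips it to the correct side, landing in $C_{2}\cup B_{1}$-adjacent regions, which are again enclosed by $\StableLocMan{z_{+}}$ and the first homoclinic arc of $\UnstableMan{z_{+}}$. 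The result is then a disc bounded in $\StableMan{z_{+}}\cup\UnstableMan{z_{+}}$ regardless of the sign of the small displacement.
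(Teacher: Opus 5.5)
There is a genuine gap: the disc $T_i$ on which your argument rests cannot be built. You take $\tau$ to be the image of a stretch of $\UnstableLocMan{z_{+}}$ crossing $C_i$ from $\gamma_{i-1}$ to $\gamma_i$, but no such stretch exists.

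\begin{itemize}
\item We have $\UnstableMan{z_{+}}\subset f(Q)$. The fold of $f(Q)$, and with it the fold of $\UnstableLocMan{z_{+}}$, lies in $f(C_\delta)\subset C_i$. Since $f(Q)$ does not extend to the right of its fold, $\UnstableLocMan{z_{+}}$ enters $C_i$ through $\gamma_{i-1}$ and turns back. No arc of $\UnstableMan{z_{+}}$ joins the two faces of $C_i$.
\item The target containment $h(f(C_\delta))\subset T_i$ is also misplaced. The point $h(f(c))=f^{i+1}(c)$ lies on $\gamma_1$ only when $f(c)$ sits on a face of $C_i$. In general the time $N$ with $f_a^N(c)=z_+$ is unrelated to $i$, so $h(f(C_\delta))$ need not hug $\gamma_1$.
\item The roles you give to $i>3$ are not the real one. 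A larger index puts $h(C_i)$ \emph{closer} to the accumulation on $f(\UnstableLocMan{z_{-}})$, not farther from it.
\end{itemize}

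Your last paragraph points in the right direction but does not close the gap. A small neighbourhood of $z_+$ to the right of $\gamma_1$ is not contained in the disc bounded by $\gamma_1$ and the first homoclinic arc of $\UnstableMan{z_{+}}$. The point $z_+$ is a corner of that disc, so the points of $C_2$ near $z_+$ on the far side of $\UnstableLocMan{z_{+}}$ lie outside it.

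What is needed is to apply $h$ once more to all of $C_2$, as the paper does:
\begin{enumerate}
\item Use $f_a(c)\in\StableMan{z_{+}}$ only to get $f^k(C_\delta)\subset B_\epsilon(z_+)$.
\item Flip the part left of $\gamma_1$ across it with one more iterate, using the orientation reversal on $\UnstableLocMan{z_{+}}$ (Lemma \ref{lemma:position-inv-manifolds-fixed-points}). Everything then lies in $C_2$.
\item Show $h(C_2)\subset W$, where $W$ is the disc bounded by $\gamma_1$ and $h(\tau)$, and $\tau\subset\UnstableLocMan{z_{+}}$ joins $\gamma_2$ to $\gamma_3$. Such a $\tau$ exists exactly because the fold lies in $C_i$ with $i\geq 4$; this is the actual use of $i>3$.
\item The inclusion $h(C_2)\subset W$ comes from the orientation-preserving order in Proposition \ref{proposition:multi-renorm}. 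The sets $h(C_j)$ are folded strips with disjoint interiors, both stable faces on $\gamma_1$, and folds moving right as $j$ grows. Hence they are nested, with $h(C_2)$ inside $h(C_3)$.
\item Lemma \ref{lemma:manifold-bounded-region->max} then finishes the argument.
\end{enumerate}
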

	\begin{proof}
		Consider first the renormalization model in the case of $b=0$. By assumption, for arbitrary $\epsilon>0$ we can find $k\in\N$ such that $f_{a_{0},0}^{k}(c)\in B_{\epsilon}(z_{+})$. Therefore, by perturbation, for $(a,b)$ close enough to $(a_{0},0)$, for $\delta>0$ small enough, we have $U:=f_{a,b}^{k}(C_{\delta})\subset B_{\epsilon}(z_{+})$. Now, we separately consider the subset of $U$ that is to the right of $\StableLocMan{z_{+}}$, that is, $U_{+}:=U\cap C_{2}$, and the subset that is to the left of $\StableLocMan{z_{+}}$, that is, $U_{-}:=U\setminus C_{2}$.
		
		In the case of $U_{-}$, if $\epsilon$ is small enough, since $f_{a,b}|_{\UnstableLocMan{z_{+}}}$ is orientation reversing by Lemma \ref{lemma:position-inv-manifolds-fixed-points}, $f_{a,b}(U_{-})$ is to the right of $\StableLocMan{z_{+}}$. Hence, it is enough to show $\omega(z)\subset \cl\UnstableMan{z_{+}}$ for points to the right of $\StableLocMan{z_{+}}$, that is $z\in U_{+}$. If $\epsilon$ is small enough, $U_{+}\subset C_{2}$. As a consequence, by Lemma \ref{lemma:manifold-bounded-region->max}, we will prove Lemma \ref{lemma:fixedpoint-in-omega(c)->maximality}, once we show $h(C_{2})\subset W$ for some $W\subset Q$ with $\fr W\subset \StableMan{z_{+}}\cup \UnstableMan{z_{+}}$.
		
		For $\delta>0$ and  $b>0$ small enough we have $f(C_{\delta})\subset C_{i}$ with $i\geq 4$. In other words, as $h(C)\cap \Ryc\subset f(C_{\delta})\cap\Ryc$, we have \begin{equation}\label{lemma:fixedpoint-in-omega(c)->maximality:eq:1}
		    \sup\pi_{1}(C_{3}\cap\Ryc)\leq\inf\pi_{1}(C_{i}\cap\Ryc)\leq \inf\pi_{1}(f(C_{\delta})\cap \Ryc)\leq \inf\pi_{1}(h(C)\cap \Ryc).
		\end{equation}Moreover, $\UnstableLocMan{z_+}$ connects $\gamma_1=\StableLocMan{z_+}$ with $h(C)\cap\Ryc$. By \eqref{lemma:fixedpoint-in-omega(c)->maximality:eq:1} this means that $\UnstableLocMan{z_+}$ intersects transversally the vertical faces of $C_3$, and, in fact, any $s$-curve on $C_3$. It follows that we can find a curve $\tau\subset \UnstableLocMan{z_{+}}$ connecting the vertical faces of $C_{3}$. In other words, $\tau$ connects the curves $\gamma_{2}$ and $\gamma_{3}$ (see Figure \ref{figure:h(C)-OP} for the positions of the curves) of $\StableMan{z_{+}}$ that are vertical faces of $C_{3}$. Then $h(\tau)$ connects the connected components of $h(C_{3})\cap\StableLocMan{z_{+}}$, and so, together with $\StableLocMan{z_{+}}$, bound a disc $W$. By Proposition \ref{proposition:multi-renorm}, $\pi_{1}(h(C_{2})\cap \Ryc)\leq  \pi_{1}(h(C_{3})\cap \Ryc)$, and so $h(C_{2})\subset W$. As $\fr W\subset \tau\cup \StableLocMan{z_{+}}$ and $\tau\subset \UnstableMan{z_{+}}$, by Lemma \ref{lemma:manifold-bounded-region->max}, this concludes the proof.
	\end{proof}
	\subsection{Hausdorff continuity for maximal attractors}\label{subsection:hasudorff-cont}
	In this section, we show that the attractors found in the previous section vary continuously in the Hausdorff distance provided the attractor is maximal. The argument appears to be similar as in \cite{barge-cont}, although Barge uses somewhat enigmatic semi-continuity argument. Moreover, the reasoning presented in \cite{kucharski:str-att} seems insufficient. As the Lozi family is an admissible family, the following proofs provide a complete proof of Hausdorff continuity in the case of the orientation preserving Lozi family as well. The case of orientation reversing Lozi family was treated by Glendinning and Simpson in \cite{glendinning2019robust}. Assume $P\subset \R^{2}$ is a compact region of parameters. 
	For a set $H\subset \plane$ and $\epsilon>0$, let $B_{\epsilon}(H)=\bigcup_{h\in H}\{z\in \plane\colon d(z,h)< \epsilon\}$.
	\begin{definition}
		We define the Hausdorff distance of $H,L\subset\plane$ by
		\[ d_{H}(H,L)=\max\{d_{a}(H,L),d_{a}(L,H)\}, \]where $d_{a}(H,L)=\sup_{h\in H}\inf_{l\in L}d(h,l)$.
	\end{definition}
	Let us recall that we equivalently have, for any $H,L\subset \plane$, 		\begin{align}
		\label{subsection:hasudorff-cont:eq:1}d_{H}(H,L)&=\inf\{\epsilon>0\colon H\subset B_{\epsilon}(L)\text{ and } L\subset B_{\epsilon}(H)\}.
	\end{align}

	For a family of piecewise smooth curves $\{\gamma_{p}\colon I=[0,1]\to \plane\}_{p\in P}$, let $\gamma_{p,i}\colon I
    \to \plane$, $i=0,...,k$, for some $k\in\N$ be families of pieces of smoothness. We assume $k$ does not vary in $P$. For a set $B\subset \plane$, let $B^{\complement}$ be its complement in $\plane$.
	\begin{lemma}\label{lemma:curves-cont}
		Let $\{\gamma_{p}\colon I=[0,1]\to \plane\}_{p\in P}$ be a family of continuous curves such that $(p,t)\mapsto \gamma_{p}(t)$ is continuous. Then $p\mapsto B_{\epsilon}(\gamma_{p}(I))^{\complement}$ is continuous in the Hausdorff distance for any $\epsilon>0$.
	\end{lemma}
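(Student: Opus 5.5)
The plan is to check the two one-sided semicontinuity conditions hidden in the characterization \eqref{subsection:hasudorff-cont:eq:1}, using uniform continuity of $(p,t)\mapsto\gamma_{p}(t)$ on the compact set $P\times I$. Write $F_{p}:=B_{\epsilon}(\gamma_{p}(I))^{\complement}=\{z\colon d(z,\gamma_{p}(I))\geq\epsilon\}$. Fix $p_{0}\in P$ and $\eta>0$. We must find $\kappa>0$ such that $|p-p_{0}|<\kappa$ implies $F_{p}\subset B_{\eta}(F_{p_{0}})$ and $F_{p_{0}}\subset B_{\eta}(F_{p})$. By symmetry of the argument it suffices to prove the first inclusion with constants uniform in $p_{0}$ and $p$. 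Uniform continuity supplies this uniformity, since it gives a $\kappa$ with $\sup_{t}|\gamma_{p}(t)-\gamma_{p_{0}}(t)|<\eta'$ whenever $|p-p_{0}|<\kappa$. In particular, $d_{H}(\gamma_{p}(I),\gamma_{p_{0}}(I))<\eta'$.

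Take $z\in F_{p}$, so $d(z,\gamma_{p}(I))\geq\epsilon$. Then $d(z,\gamma_{p_{0}}(I))\geq\epsilon-\eta'$, and we need a point $w$ with $d(w,\gamma_{p_{0}}(I))\geq\epsilon$ and $|w-z|<\eta$. Pick $y\in\gamma_{p_{0}}(I)$ nearest to $z$. If $d(z,y)\geq\epsilon$ there is nothing to do. Otherwise set $w:=z+s\,(z-y)/|z-y|$ with $s=\eta'$, pushing $z$ radially away from its nearest point. The distance function $d(\cdot,\gamma_{p_{0}}(I))$ need not grow along this ray, so this push is not automatically enough. This is the main obstacle: the complement of an $\epsilon$-neighbourhood can have thin parts where pushing away from one nearest point moves towards another part of the curve.

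To handle this, I would argue by contradiction with compactness rather than by an explicit push. Suppose there are $p_{n}\to p_{0}$ and $z_{n}\in F_{p_{n}}$ with $d(z_{n},F_{p_{0}})\geq\eta$. The $z_{n}$ can be taken bounded, since far points lie in every $F_{p}$ once the curves are uniformly bounded. Passing to a subsequence, $z_{n}\to z$, and $d(z,\gamma_{p_{0}}(I))\geq\epsilon$ by uniform convergence of $\gamma_{p_{n}}$ to $\gamma_{p_{0}}$. So $z\in F_{p_{0}}$, contradicting $d(z_{n},F_{p_{0}})\geq\eta$. This gives the upper semicontinuity $F_{p}\subset B_{\eta}(F_{p_{0}})$.

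For the reverse inclusion, take $z_{n}\in F_{p_{0}}$ with $d(z_{n},F_{p_{n}})\geq\eta$, and extract a limit $z\in F_{p_{0}}$. Limits only give $d(z,\gamma_{p_{n}}(I))\geq\epsilon-o(1)$, which does not place $z$ in $F_{p_{n}}$. So I would need that every point of $F_{p_{0}}$ is a limit of points strictly farther than $\epsilon$ from $\gamma_{p_{0}}(I)$. The remaining step is therefore to show $F_{p_{0}}=\cl\{z\colon d(z,\gamma_{p_{0}}(I))>\epsilon\}$. I would attempt this using that $d(\cdot,K)$, for a compact $K$, has no local maxima in the plane at value $\epsilon>0$ on a set of interior points. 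If needed, I would use the pieces-of-smoothness hypothesis on $\gamma_{p,i}$ to control the level set $\{d=\epsilon\}$ locally.
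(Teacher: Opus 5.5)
\textbf{There is a genuine gap, and it cannot be closed: the lemma as stated is false.}

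Your first half is sound. The compactness argument gives the upper-semicontinuity inclusion $F_p\subset B_\eta(F_{p_0})$ for $p$ near $p_0$: the $z_n$ are bounded, $\gamma_{p_n}\to\gamma_{p_0}$ uniformly, and so any limit point $z$ satisfies $d(z,\gamma_{p_0}(I))\geq\epsilon$.

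The gap is the step you reduce the reverse inclusion to, namely $F_{p_0}=\overline{\{z\colon d(z,\gamma_{p_0}(I))>\epsilon\}}$. The distance function to a compact curve can have a strict local maximum with value exactly $\epsilon$, and then the lemma itself fails. Take $p\in[-\epsilon/2,\epsilon/2]$ and $\gamma_p(t)=(\epsilon+p)(\cos 2\pi t,\sin 2\pi t)$. This is continuous in $(p,t)$ and even analytic in $t$. Since $d(z,\gamma_p(I))=\bigl||z|-\epsilon-p\bigr|$, we get $F_p=\{|z|\leq p\}\cup\{|z|\geq 2\epsilon+p\}$. Thus $0$ is an isolated point of $F_0$. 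For $p<0$ the first set is empty, so $d_H(F_p,F_0)\geq d(0,F_p)=2\epsilon+p\geq 3\epsilon/2$.

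So the inclusion $F_{p_0}\subset B_\eta(F_p)$ fails in general. Piecewise smoothness of the $\gamma_{p,i}$ cannot rescue it, since the circle is smooth. Your opening symmetry reduction would also need the first inclusion uniformly in the base point, which is equivalent to this same false half.

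The paper's proof does not get around this either. It describes $B_\epsilon(\gamma_p(I))^{\complement}\cap\overline{B_R(0)}$ as the set of points $\psi(p,t,\alpha,r)$ with $r\geq\epsilon$. That is the set of points at distance at least $\epsilon$ from \emph{some} point $\gamma_p(t)$, so it replaces the required intersection over $t$ by a union. The uniform-continuity argument is then applied to the wrong set.

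What is true is exactly what you proved, upper semicontinuity. Full continuity at $p_0$ also holds when $\epsilon$ is not a local-maximum value of $d(\cdot,\gamma_{p_0}(I))$, and that excludes only countably many $\epsilon$. In that case $F_{p_0}=\overline{\{d>\epsilon\}}$, and your approximation argument goes through by compactness.

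The only use of the lemma is the second inclusion in \eqref{lemma:three-seq-hausdorff-cont:eq:1}, in the proof of Lemma \ref{lemma:three-seq-hausdorff-cont}. That is precisely the upper-semicontinuous direction, so the downstream argument survives with your half. In fact the composite inclusion $B_{\epsilon/2}(G^n_{p_k})^{\complement}\subset B_{\epsilon/8}(G^n_{p'})^{\complement}$ follows directly from $d_H(G^n_{p_k},G^n_{p'})<\epsilon/4$ by the triangle inequality.
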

	\begin{proof}
        Fix $\epsilon>0$ and $R>\epsilon$. Let $\psi(p,t,\alpha,r)$ be the point in $\plane$ that has angular coordinates $(\alpha,r)\in [0,2\pi]\times [0,\infty]$ centered at $\gamma_p(t)$. Then \begin{multline*}
            \cc{B}_{p,R}:= B_{\epsilon}(\gamma_{p}(I))^{\complement}\cap \cl B_R(0)=\\\{\psi(p,t,\alpha,r)\in\plane\colon \epsilon\leq r,~\alpha\in[0,2\pi],~t\in I,~p\in P, ~||\psi(p,t,\alpha,r)||\leq R\}.   
        \end{multline*}
                 
        Since $P\times I\times [0,2\pi]\times [\epsilon,R]$ is compact, $\psi$ is uniformly continuous in $p\in P$. This means that for any $\varepsilon>0$ and a sequence $p_k$, $k\in\N$, convergent to $p'$, we can find $K\in\N$ such that $d(\psi(p_k,t,\alpha,r),\psi(p',t,\alpha,r))<\varepsilon$ for every $k\geq K$, uniformly in other parameters. Consequently, $\cc{B}_{p_k,R}\to\cc{B}_{p',R}$ in Hausdorff distance. Moreover, we have $ B_{\epsilon}(\gamma_{p}(I)) \subset  B_R(0)$ for $R$ big enough. Therefore, \[
        B_{\epsilon}(\gamma_{p}(I))^{\complement}=\cc{B}_{p,R}\cup B_R(0)^{\complement}\]Now, since $B_R(0)$ does not depend on $p$, and $\cc{B}_{p,R}$ is continuous in Hausdorff distance, necessarily $B_{\epsilon}(\gamma_{p}(I))^{\complement}$ is continuous in Hausdorff distance.
	\end{proof}
	\begin{lemma}\label{lemma:three-seq-hausdorff-cont}
		Let $\{A_{p}\}_{p\in P}$ be a family of compact sets, $ F^{n}_{p} $, $ n\in\N $, be a sequence of compact sets such that for every $n\in\N$, the map $p\mapsto F^{n}_{p}$ is continuous in the Hausdorff metric, and $ G^{n}_{p}$, $n\in\N$ be a sequence of families of images of piecewise smooth curves, $G^{n}_{p}=\gamma_{n,p}(I)$, for some $\gamma_{n,p}\colon I\to \plane$, with $p\mapsto G^{n}_{p}$ continuous in the Hausdorff distance for every $n\in\N$. Assume that, for any $p\in P$, we have 
		\begin{equation}\label{eq:hausdorff-cont}
			d_{H}(G^{n}_{p},A_{p})\to 0,\quad d_{H}(F^{n}_{p},A_{p})\to 0,\text{ and  }G^{n}_{p}\subset A_{p}\subset F^{n}_{p},
		\end{equation} as $n\to\infty$, then $p\mapsto A_{p}$ is continuous in the Hausdorff metric.
	\end{lemma}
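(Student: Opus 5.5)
The plan is to verify the Hausdorff continuity at an arbitrary point $p'\in P$ via a sandwich argument. I would work with the characterization \eqref{subsection:hasudorff-cont:eq:1} and establish two one-sided estimates separately. Fix $p'$ and $\varepsilon>0$. By \eqref{eq:hausdorff-cont} at the single parameter $p'$, I would choose $n$ large enough that $d_{H}(G^{n}_{p'},A_{p'})<\varepsilon/3$ and $d_{H}(F^{n}_{p'},A_{p'})<\varepsilon/3$. Note that $n$ depends only on $p'$, and this is the key point: the approximation is not assumed uniform in $p$, so every choice must be made at $p'$ and then propagated by continuity. With $n$ fixed, continuity of $p\mapsto G^{n}_{p}$ and $p\mapsto F^{n}_{p}$ yields a neighbourhood $N$ of $p'$ on which $d_{H}(G^{n}_{p},G^{n}_{p'})<\varepsilon/3$ and $d_{H}(F^{n}_{p},F^{n}_{p'})<\varepsilon/3$.

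\emph{Step 1: $A_{p'}\subset B_{\varepsilon}(A_p)$ for $p\in N$.} Take $x\in A_{p'}$. There is a point of $G^{n}_{p'}$ within $\varepsilon/3$ of $x$, then a point of $G^{n}_{p}$ within a further $\varepsilon/3$, and this point lies in $A_{p}$ because $G^{n}_{p}\subset A_p$. Thus $d(x,A_p)<2\varepsilon/3$.

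\emph{Step 2: $A_p\subset B_{\varepsilon}(A_{p'})$ for $p\in N$.} Take $y\in A_p$. Since $A_p\subset F^{n}_{p}$, we have $y\in F^{n}_p$. There is a point of $F^{n}_{p'}$ within $\varepsilon/3$ of $y$, and that point lies within $\varepsilon/3$ of $A_{p'}$ by the choice of $n$. Hence $d(y,A_{p'})<2\varepsilon/3$.

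Combining the two steps gives $d_H(A_p,A_{p'})<\varepsilon$ on $N$, which is continuity at $p'$. Here the lower approximants $G^n$ control the direction \emph{$A_{p'}$ is not much larger than $A_p$} (lower semicontinuity), and the upper approximants $F^n$ control the reverse direction (upper semicontinuity). The curve structure of $G^{n}_{p}$ and Lemma~\ref{lemma:curves-cont} are not needed here; presumably they matter only later, for checking that the concrete families, namely unstable manifold pieces and trapping-region iterates, satisfy the hypotheses.

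The main obstacle is simply resisting the temptation to use the convergence in \eqref{eq:hausdorff-cont} at parameters $p\neq p'$, where no uniformity is given. The argument above avoids this because the containments $G^n_p\subset A_p\subset F^n_p$ hold at every $p$ and bridge the gap without any rate of convergence at $p$. No compactness of $P$ is needed for pointwise continuity.
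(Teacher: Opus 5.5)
Your proof is correct, and it has the same sandwich structure as the paper's. The upper approximants give $A_p\subset B_{\epsilon}(A_{p'})$ via $A_p\subset F^n_p$, and the lower approximants give $A_{p'}\subset B_{\epsilon}(A_p)$ via $G^n_p\subset A_p$. Your Step 2 is essentially the paper's first paragraph.

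The difference is in the $G^n$ half. The paper argues through complements, $B_{\epsilon}(A_{p_k})^{\complement}\subset B_{\epsilon/2}(G^{n}_{p_k})^{\complement}\subset B_{\epsilon/4}(B_{\epsilon/2}(G^{n}_{p'})^{\complement})\subset B_{\epsilon/8}(G^{n}_{p'})^{\complement}\subset A_{p'}^{\complement}$. It justifies the middle inclusion by Lemma~\ref{lemma:curves-cont}, i.e.\ Hausdorff continuity of $p\mapsto B_{\epsilon}(\gamma_p(I))^{\complement}$.

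Your two-step triangle inequality replaces all of this and uses only what the statement assumes, namely Hausdorff continuity of $p\mapsto G^n_p$. The paper's route additionally needs joint continuity of $(p,t)\mapsto\gamma_{n,p}(t)$, which is a hypothesis of Lemma~\ref{lemma:curves-cont} but not of the present lemma.

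Moreover, the two-sided continuity asserted in Lemma~\ref{lemma:curves-cont} fails in general. Take circles $\gamma_p$ of radius $p$ centred at $0$. At $p=\epsilon$ the centre lies in $B_{\epsilon}(\gamma_p(I))^{\complement}$. For $p<\epsilon$ that complement is $\{|z|\geq p+\epsilon\}$, at distance $p+\epsilon$ from the centre. The paper only uses the one-sided (upper semicontinuity) inclusion, which does hold by a compactness argument, so its proof is repairable. Your direct route avoids the issue entirely and confirms your remark that the curve structure of $G^n_p$ plays no role here.
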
\label{lemma:haus-cont}
	\begin{proof}
		Let $\{p_{k}\}_{k\in\N}$ be a sequence with $p_{k}\to p'$ as $k\to\infty$. Take $ \epsilon>0 $. For large enough $n$ we have $F^{n}_{p'}\subset B_{\epsilon/2}(A_{p'})$. From the continuity of $F^{n}_{p}$ we obtain $A_{p_{k}}\subset F^{n}_{p_{k}}\subset B_{\epsilon/2}(F^{n}_{p'})\subset B_{\epsilon}(A_{p'})$ for large $k$.
		
		On the other hand, by \eqref{eq:hausdorff-cont} $ A_{p'}\subset B_{\epsilon/8}(G^{n}_{p'}) $ for $n$ large enough. From the continuity of $G^{n}_{p}$ in $p$ we get \begin{equation}\label{lemma:three-seq-hausdorff-cont:eq:1}
			B_{\epsilon}(G^{n}_{p_{k}})^{\complement}\subset  B_{\epsilon/2}(G^{n}_{p_{k}})^{\complement}\subset B_{\epsilon/4}( B_{\epsilon/2}(G^{n}_{p'})^{\complement})\subset B_{\epsilon/8}(G^{n}_{p'})^{\complement}\subset A_{p'}^{\complement} 
		\end{equation}for sufficiently big $k$. The second inequality in \eqref{lemma:three-seq-hausdorff-cont:eq:1} follows from the continuity of $p\mapsto B_{\epsilon/2}(G^{n}_{p})^{\complement}$, which is a consequence of Lemma \ref{lemma:curves-cont}. The third one follows from the triangle inequality. Consequently, $ B_{\epsilon}(A_{p_{k}})^{\complement}\subset B_{\epsilon}(G^{n}_{p_{k}})^{\complement}\subset A_{p'}^{\complement} $. Finally, $A_{p'}\subset  B_{\epsilon}(A_{p_{k}})$, proving the lemma.
	\end{proof}
	We now consider the case of the continuity of invariant sets in admissible families. Let $\cc{A}_{p}=\bigcap_{i\geq 0}f_{p}^{i}(T_{p})$, for some trapping region $T_{p}$ that varies continuously in the Hausdorff metric.
	\begin{prop}\label{prop:continuity-hausdorff}
		Let $\cc{A}_{p}$ be an invariant set of $f=f_p$ from an admissible family. Assume that $\cc{A}_{p}=\cl\UnstableMan{z_{+}}$ in some closed subset $P\subset A\times B$. Then $\cc{A}_{p}$ varies continuously in $P$.
	\end{prop}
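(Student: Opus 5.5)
The plan is to apply Lemma \ref{lemma:three-seq-hausdorff-cont} with $A_{p}=\cc{A}_{p}$, an outer approximating sequence $F^{n}_{p}$ and an inner sequence of curves $G^{n}_{p}$.

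For the outer sequence we take $F^{n}_{p}:=f_{p}^{n}(\cl T_{p})$. Since $T_{p}$ is a trapping region, these sets are compact and nested. By maximality their intersection is $\cc{A}_{p}$, so a standard compactness argument gives $d_{H}(F^{n}_{p},\cc{A}_{p})\to 0$, and clearly $\cc{A}_{p}\subset F^{n}_{p}$. Continuity of $p\mapsto F^{n}_{p}$ for fixed $n$ follows from two facts: $(p,z)\mapsto f_{p}(z)$ is jointly continuous (by \ref{stepA1} and \ref{stepB1}, the maps being homeomorphisms), and $T_{p}$ varies continuously in the Hausdorff metric. The image of a Hausdorff-continuous family of compact sets under finitely many jointly continuous maps is again Hausdorff-continuous, by uniform continuity on a compact neighbourhood.

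For the inner sequence we use the unstable manifold. By Proposition \ref{proposition:inv-manifolds-endo2}, applied to $f^{-1}$ or directly to the unstable manifold of the hyperbolic fixed point $z_{+}=z_{+}(p)$, there is a local unstable curve $\UnstableLocMan{z_{+}(p)}=\gamma_{p}(I)$ depending continuously on $p$ as a $C^{1}$ curve, so $(p,t)\mapsto\gamma_{p}(t)$ is continuous. Set $G^{n}_{p}:=f_{p}^{n}(\gamma_{p}(I))$, which is parametrized by $f_{p}^{n}\circ\gamma_{p}$. This parametrization is jointly continuous in $(p,t)$ and piecewise smooth, since $f$ is smooth off $\Rc$. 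It is therefore Hausdorff-continuous in $p$, for instance by Lemma \ref{lemma:curves-cont} or directly by uniform continuity. These sets increase in $n$, their union is $\UnstableMan{z_{+}}$, and $G^{n}_{p}\subset \cl\UnstableMan{z_{+}}=\cc{A}_{p}$. Since $\cc{A}_{p}$ is compact and $\bigcup_{n}G^{n}_{p}$ is dense in it, for each $\epsilon>0$ finitely many $\epsilon$-balls cover $\cc{A}_{p}$, and each ball meets some $G^{n}_{p}$. Monotonicity then gives $d_{H}(G^{n}_{p},\cc{A}_{p})\to 0$.

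With all the hypotheses of Lemma \ref{lemma:three-seq-hausdorff-cont} verified on $P$, continuity of $p\mapsto\cc{A}_{p}$ follows.

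The main obstacle is the inner approximation. We need a single uniform choice of local unstable curve, of fixed length, that varies continuously on all of $P$, including possibly near $b=0$ where $f_{a,0}$ is not a homeomorphism. The first attempt is to invoke Proposition \ref{proposition:inv-manifolds-endo2} locally around each $p\in P$. Since continuity is a local property, it suffices to verify the lemma on small neighbourhoods of each point of $P$, and the local unstable curve of the hyperbolic continuation $z_{+}(p)$ is continuous there. A secondary point to watch is that Lemma \ref{lemma:three-seq-hausdorff-cont} asks only for pointwise convergence in $n$, not uniform convergence, so no uniformity in $p$ is needed for the convergence statements.
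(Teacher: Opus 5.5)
Your proposal is correct and follows the paper's proof. Like the paper, you apply Lemma \ref{lemma:three-seq-hausdorff-cont} with $F^{n}_{p}=f^{n}_{p}(T_{p})$ and $G^{n}_{p}=f^{n}_{p}(\UnstableLocMan{z_{+}})$. The differences are minor improvements:
\begin{itemize}
\item you use $\cl T_{p}$ to guarantee compactness;
\item you actually check that $p\mapsto F^{n}_{p}$ and $p\mapsto G^{n}_{p}$ are Hausdorff continuous, which the paper only asserts;
\item you get $d_{H}(G^{n}_{p},\cc{A}_{p})\to 0$ from a finite $\epsilon$-cover and density, where the paper instead uses complements of $\epsilon$-neighbourhoods in the one-point compactification.
\end{itemize}
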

	\begin{proof}
		Put $A_{p}=\cc{A}_{p}$, $G^{n}_{p}=f^{n}(\UnstableLocMan{z_{+}})$ and $ F^{n}_{p}=f^{n}(T_{p}) $. We claim that the sequences satisfy the assumptions of Lemma \ref{lemma:haus-cont}, and this will show the proposition.
		
		Note that, as $F^{n}_{p}$, $n\in\N$, is a descending sequence of compact subsets and $A_{p}=\bigcap_{n\in\N} F^{n}_{p}$, we necessarily have $d_{H}(F_{p}^{n},A_{p})\to 0$. Indeed, for arbitrary $\epsilon>0$, from the compactness we have $F_{p}^{n}\subset B_{\epsilon}(A_{p})$ for $n$ big enough. 
		
		For the case of $G_{n}=G^{n}_{p}$, let us compactify $\plane$ to $\sphere{2}$ by one point $\infty$ in infinity. Thus, from now on, we consider all subsets of $\plane$ which are unbounded, as containing $\infty$. Similarly as in the case of $F^{n}_{p}$, since $ G_{n}$ is an ascending sequence and $G=\bigcup_{n\in\N}G_{n}$,  $G=\UnstableMan{z_+}$, we have that for $\epsilon>0$, $B_{\epsilon}( G_{n})^{\complement}$ is a descending sequence of compact subsets in $\R^2\cup\{\infty\}$, and $ B_{\epsilon}( G)^{\complement}=\bigcap _{n\in\N} B_{\epsilon}( G_{n})^{\complement}$. Then, as before, we have $ B_{\epsilon}( G_{n})^{\complement}\subset B_{\epsilon/2}( B_{\epsilon}( G)^{\complement})$ for $n$ large enough. Consequently, $ G\subset B_{\epsilon/2}(B_{\epsilon}( G)^{\complement})^{\complement}\subset B_{\epsilon}( G_{n})$, and so also $A_p=\cl G\subset B_{\epsilon}( G_{n})$, proving the claim.
	\end{proof}
	\subsection{Proof of Thm. \ref{mainthm:maximal-attractors}: topological strange attractors}
	In this section, we strengthen the results of Theorem \ref{mainthm:chaotic-attractors}. Recall that for a forward invariant $W\subset \plane$ we have $\omega(W)\subset \cl(\bigcup_{z\in W}\omega(z))$ and $\bigcap_{n\in\N}f^{n}(W)=\omega(W)$. Hence, to show maximality, it is enough to show $\omega(z)\subset \cl\UnstableMan{z_{+}}$ for all $z\in \cl\UnstableMan{z_{+}}$. Let us recall Thm. \ref{mainthm:maximal-attractors}. 
	\begin{theorem*}[Thm. \ref{mainthm:maximal-attractors}]
		Let $\cc{F}:=\{f_{a,b}\}_{(a,b)\in A\times B}$ be an admissible family and $a_{*}$ be the parameter such that $f_{a_{*}}(c)=q_{+}$, and $f_{a}(c)>q_{+}$ for $a\in (a_{*},a_{2})$. Then there are open sets $K_{n}\subset A$, with $K_{n}\to \{a_{*}\}$, as $n\to\infty$, in the Hausdorff metric, and parameters $k_{n}\in B$, with $k_{n}\to 0$, as $n\to\infty$, such that the $\cl\UnstableMan{z_{+}}$ is a maximal invariant set in some neighbourhood for parameters in $K_{n}\times (0,k_{n})$. In particular, at those parameters, under the additional assumption of piecewise uniform hyperbolicity \ref{stepA2-hyp} and high expansion \ref{stepA2-expansion}, $\cl\UnstableMan{z_{+}}$ is a topological strange attractor and varies continuously in the Hausdorff metric. 
	\end{theorem*}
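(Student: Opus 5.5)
The plan is to split according to orientation and to build the parameter sets $K_{n}$ from one dimensional parameters at which the critical value lands on the stable set of $z_{+}$.

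\textbf{Orientation reversing case.} Here Lemma \ref{lemma:maximality-OR} already gives open sets $\tilde A^{b}$ on which $\Omega=\cl\UnstableMan{z_{+}}$. Since $A^{b}\nearrow(a',a_{*})$ and the horseshoe condition $\sup\pi_{1}(C_{2}\cap\Ryc)<\inf\pi_{1}(h(C_{2})\cap\Ryc)$ holds for $a$ near $a_{*}$ and $b$ small, I will take $K_{n}$ to be shrinking open intervals in $\tilde A^{b}$ accumulating at $a_{*}$, with corresponding $k_{n}\to0$.

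\textbf{Orientation preserving case.} First I use \ref{stepA3} and continuity of $a\mapsto f_{a}(c)$. As $a\nearrow a_{*}$, the value $f_{a}(c)$ approaches $q_{+}$, and $f_{a}^{2}(c)$ approaches $q_{-}$. From the renormalization picture (Proposition \ref{proposition:multi-renorm}, with $b=0$), the point $f_{a,0}(c)$ then passes through the strips $C_{i}$ with $i\to\infty$. Each $\gamma_{i}\subset\StableMan{z_{+}}$, so by the intermediate value theorem there are parameters $a_{n}\to a_{*}$ with $f_{a_{n}}(c)\in\gamma_{i_{n}}\subset\StableMan{z_{+}}$ and $i_{n}>3$. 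Lemma \ref{lemma:fixedpoint-in-omega(c)->maximality} then gives $\delta_{n}>0$ and a neighbourhood of $(a_{n},0)$ on which every $z\in C_{\delta_{n}}$ satisfies $\omega(z)\subset\cl\UnstableMan{z_{+}}$. Its proof uses only that $f^{k}(C_{\delta})$ lies close to $z_{+}$ together with $f(C_{\delta})\subset C_{i}$. These are open conditions, so they persist for $a$ in an open interval $K_{n}\ni a_{n}$ of small length and for $b\in(0,k_{n})$. Shrinking, I arrange $K_{n}\to\{a_{*}\}$.

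For points whose orbits avoid $C_{\delta_{n}}$, I apply Lemma \ref{lemma:z-omits-cdelta->almost-vert-stmani} with $j=i_{n}$. This gives a preimage $h^{-r}(z)$ whose local stable manifold is an $s$-curve crossing $C$ from bottom to top. Since $\UnstableLocMan{z_{+}}$ crosses $C$ horizontally (it connects $\gamma_{1}$ to $h(C)\cap\Ryc$ through $C_{3}$), the two intersect. Iterating forward, $\omega(z)=\omega(h^{-r}(z))\subset\cl\UnstableMan{z_{+}}$. Every $z$ in the trapping region $T$ of Lemma \ref{lemma:trapping-region} either eventually enters $C_{\delta_{n}}$ or never does, so every $\omega(z)$ lies in $\cl\UnstableMan{z_{+}}$. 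Hence $\bigcap f^{m}(T)=\omega(T)\subset\cl\UnstableMan{z_{+}}$, and the reverse inclusion holds because $\UnstableMan{z_{+}}\subset T$ is invariant. This proves maximality in the neighbourhood $T$.

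\textbf{Final claims.} Under \ref{stepA2-hyp} and \ref{stepA2-expansion}, the proof of Theorem \ref{mainthm:chaotic-attractors} gives that $\cl\UnstableMan{z_{+}}$ is an attractor with a dense orbit. A positive lower Lyapunov exponent along that orbit follows from the cone expansion of Lemma \ref{lemma:cone-inv} (cf. Remark \ref{remark:stepsA-B}). Together with maximality this makes it a topological strange attractor. Hausdorff continuity follows from Proposition \ref{prop:continuity-hausdorff}, applied on closed subsets of $K_{n}\times(0,k_{n})$, since $T$ is built from stable-manifold curves and varies continuously.

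\textbf{Main obstacle.} I expect the hardest step to be the openness and uniformity in the orientation preserving case. Lemma \ref{lemma:fixedpoint-in-omega(c)->maximality} is stated at exact parameters with $f_{a}(c)\in\StableMan{z_{+}}$, and Lemma \ref{lemma:z-omits-cdelta->almost-vert-stmani} needs $f(C_{\delta})\subset C_{j}$ for the same $j$ and $\delta$. My first attempt is to show that both hypotheses involve only finitely many iterates and robust transversality, so that they survive perturbation of $a$. I also need to check that a single $\delta_{n}$ serves both lemmas, choosing $\delta_{n}$ first and then shrinking $K_{n}$ and $k_{n}$.
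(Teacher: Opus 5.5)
\textbf{Gap in the orientation-preserving case.} Your orientation-reversing case and your closing claims follow the paper's outline. The orientation-preserving maximality step, however, has a genuine gap. Your dichotomy ``the orbit enters $C_{\delta_n}$ / never enters $C_{\delta_n}$'' does not match the hypotheses of Lemma \ref{lemma:z-omits-cdelta->almost-vert-stmani}.

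That lemma concerns points $z\in\bigcap_{m\ge 0}h^{m}(C)$ whose \emph{two-sided} $h$-orbit avoids the whole strip $C_{j}\supset f(C_{\delta})$. Its proof traps $z$ in the nested rectangles $H_{r}$, and this needs a backward $h$-orbit that stays in $\bigcup_{i<j}C_{i}$. Two problems follow:
\begin{enumerate}
\item A point of $C\setminus h(C)$ has no $h$-preimage at all, so $h^{-r}(z)$ and ``$\omega(z)=\omega(h^{-r}(z))$'' are not available for general $z$.
\item More seriously, a forward orbit can avoid $C_{\delta_n}$ while visiting $C_{j}$ infinitely often, away from the critical curve $\zeta_{j}$. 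Such orbits are covered by neither lemma.
\end{enumerate}

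\textbf{How the paper covers these orbits.}
\begin{enumerate}
\item It applies Lemma \ref{lemma:z-omits-cdelta->almost-vert-stmani} only to limit points $z'\in\omega(z)$.
\item It chooses $K_{n}$ so that, besides $f(C_{\delta})\subset C_{n+1}$, also $h(f(Q)\cap C_{n+1})\subset C_{2}$. This is possible because $f(Q)\cap C_{n+1}$ is a thin tip hugging $\gamma_{n}$ and $h(\gamma_{n})\subset\gamma_{1}$.
\item It proves directly that $\omega(z)\subset\cl\UnstableMan{z_{+}}$ for \emph{every} $z\in C_{2}$. Take an arc $\tau\subset\UnstableLocMan{z_{+}}$ from $\gamma_{2}$ to $\gamma_{3}$. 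Its image $h(\tau)$, together with $\gamma_{1}$, bounds a disc $W\supset h(C_{2})$ by the ordering in Proposition \ref{proposition:multi-renorm}. Then Lemma \ref{lemma:manifold-bounded-region->max} applies.
\end{enumerate}
Thus every orbit through $C_{n+1}$ is sent into $C_{2}$ and settled there. Only orbits that avoid $C_{n+1}$ altogether are left for Lemma \ref{lemma:z-omits-cdelta->almost-vert-stmani}. This $C_{2}$ step is the idea your proposal is missing.

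\textbf{Choice of $K_n$.} $K_{n}$ cannot be an open interval around $a_{n}$ with the fixed index $j=i_{n}$. At $a_{n}$ the critical value lies on $\gamma_{i_{n}}$, the common face of $C_{i_{n}}$ and $C_{i_{n}+1}$. So the inclusion $f(C_{\delta})\subset C_{i_{n}}$ is borderline at $a_{n}$, and it fails at nearby $(a,b)$ where the fold tip of $f(C_{\delta})$ crosses $\gamma_{i_{n}}$. Your openness argument therefore breaks exactly here. The paper takes $K_{n}$ near $a_{n}$ but not around it, on the side where $f_{a}(c)$ lies just inside $C_{n+1}$. This is what makes both $f(C_{\delta})\subset C_{n+1}$ and the tip condition above robust.

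\textbf{Lyapunov exponent.} Lemma \ref{lemma:cone-inv} only controls orbit segments outside $C_{\delta}$, but a dense orbit in $\cl\UnstableMan{z_{+}}$ enters $C_{\delta}$ infinitely often. You need \ref{stepA2-hyp} along a dense orbit chosen off $\bigcup_{i}f^{i}(\Rc)$ by a Baire argument, as the paper does.
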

	\begin{proof}
		Note that the renormalization model can be constructed for the parameters in $(\tilde a,a_{*})\times\{0\}$, for some $\tilde a\in A$. That is, one can consider the map $f_{a,0}(x,y)=(f_{a}(x),0)$, and construct the pullbacks of stable curves as in Section \ref{section:renormalization}. In this case, the stable curves are vertical lines, and every point $z$ is projected onto the horizontal axis. Let us denote by $\gamma_{k}=\gamma_{k}^{(a,b)}$, $b\geq0$, $k\geq 1$, the stable curves constructed as pullbacks of $\StableLocMan{z_{+}}$. As $(z,a,b)\mapsto f_{a,b}(z)$ is $C^{1}$ outside $C_\delta$, the manifold $\StableLocMan{z_{+}}$ depends continuously on $(a,b)$, see \cite[Prop. 6.2.21]{katok_Hasselblatt_1995} and Proposition \ref{proposition:inv-manifolds-endo2}, and so also $\gamma_{k}^{(a,b)}$ depend continuously on $(a,b)$. Let $a_{n}\in (\tilde a,a_{*})$, $n\geq 3$, be the parameter such that $f_{a_{n},0}(c)\in \gamma_{n}^{(a_{n},0)}\subset C_{n+1}$. Then we have $f_{a_{n},0}(c)\in C_{n'}$, with $n'=n+1>3$, and so the assumptions of Lemma \ref{lemma:fixedpoint-in-omega(c)->maximality} are satisfied. Consider sufficiently small open intervals $K_{n}\subset (\tilde a,a_{*})$, which are sufficiently close to $a_{n}$, so that for parameters $(a,b)\in K_{n}\times (0,k_{n})$, for some small $k_{n}>0$, we have \begin{equation}\label{proofmainthmB:eq:1}
			f_{(a,b)}(C_{\delta})\subset C_{n+1}\text{ and }h(f_{(a,b)}(Q)\cap C_{n+1})\subset C_{2},
		\end{equation} for some $\delta>0$, and the conclusions of Lemmas \ref{lemma:fixedpoint-in-omega(c)->maximality} and \ref{lemma:z-omits-cdelta->almost-vert-stmani} are satisfied. The latter of \eqref{proofmainthmB:eq:1} can be ensured, as $f_{(a,b)}(Q)\cap C_{n+1}$ is close to $\gamma_{n}$ for $(a,b)$ sufficiently close to $(a_{n},0)$ and $\delta$ small enough, and $h(\gamma_{n})\subset \gamma_{1}\subset C_{2}$.		
		
		We now show that for every $z\in Q$ we have $\omega(z)\subset \cl\UnstableMan{z_{+}}$. Note that every point $z\in Q$ visits $C$, this follows from the renormalization model. Hence, it is enough to show $\omega(z)\subset \cl\UnstableMan{z_{+}}$ for $z\in C$. 
		
		If $h^{i}(z)\in f(C_{\delta})$, for some $\delta>0$ and $i\geq 0$, then the claim follows from Lemma \ref{lemma:fixedpoint-in-omega(c)->maximality}. Therefore, assume $h^{i}(z)\notin f(C_{\delta})$ for any $i\geq 0$. Let $z'\in \omega(z)$. Then $z'\in \bigcap _{j\geq 0} h^{j}(C)$. If $h^{i}(z')\notin C_{n+1}$, for any $i\in \Z$ then, according to Lemma \ref{lemma:z-omits-cdelta->almost-vert-stmani}, the local stable manifold $\StableLocMan{\tilde z}$ of $\tilde z=h^{-r}(z')$, for some $r>1$, is almost vertical and connects the upper and lower faces of $C$. Note that $\UnstableLocMan{z_{+}}$ connects $z_{+}$ with $f(C_{\delta})$, and so $\StableLocMan{\tilde z}\cap \UnstableLocMan{z_{+}}\neq\emptyset$, implying $\omega(\tilde z)\in \cl\UnstableMan{z_{+}}$. Consequently, $\omega(z')=\omega(z)\subset \cl\UnstableMan{z_{+}}$.
		
		On the other hand, assume $h^{i}(z')\in C_{n+1}$ for some $i\in \Z$. Then, by \eqref{proofmainthmB:eq:1} we have \[h^{i+1}(z')\in h(h(C)\cap C_{n+1})\subset h(f(Q)\cap C_{n+1})\subset C_{2}.\]Therefore, it remains to show that for any $z\in C_{2}$ we have $\omega(z)\subset \cl\UnstableMan{z_{+}}$.
		
		Let $z\in C_{2}$. We proceed as in the second part of the proof of Lemma \ref{lemma:fixedpoint-in-omega(c)->maximality}. As $n\geq 3$, and $f(C_{\delta})\subset C_{n+1}$, $\UnstableLocMan{z_{+}}$ connects $\gamma_{1}$ and $\gamma_{n}$, the latter is the left vertical face of $C_{n+1}$. Therefore, there exists an arc $\tau\subset \UnstableLocMan{z_{+}}$ connecting $\gamma_{2}$ and $\gamma_{3}$, see Figure \ref{figure:h(C)-OP}. Consequently, the curves $h(\tau)\subset \UnstableMan{z_{+}}$ and $\gamma_{1}=\StableLocMan{z_{+}} $ bound a disk $W$ that contains $h(C_2)$. This follows from the order on $\pi_1(h(C_k)\cap \Ryc)$ given by Proposition \ref{proposition:multi-renorm} Note that $\fr W\subset \StableMan{z_{+}}\cup\UnstableMan{z_{+}}$, and so $\omega(z)\subset \omega(W)\subset \cl\UnstableMan{z_{+}}$ by Lemma \ref{lemma:manifold-bounded-region->max}, concluding the proof that $\omega(z)\subset \cl\UnstableMan{z_{+}}$ for $z\in Q$.
		
		We have just shown that $\omega(z)\subset \cl\UnstableMan{z_{+}}$ for $z\in Q$. In particular, we see that for any trapping region $T$ with $\cl\UnstableMan{z_{+}}\subset T\subset Q$, we have $\cl\UnstableMan{z_{+}}=\bigcap_{i\geq 0}f^{i}(T)$. The existence of such a trapping region is ensured by Lemma \ref{lemma:trapping-region}. Therefore, $\cl\UnstableMan{z_{+}}$ is a maximal invariant set in $T$, and so $\cl\UnstableMan{z_{+}}$ is a topological attractor.
		
		Let us additionally assume piecewise uniform hyperbolicity \ref{stepA2-hyp} and high expansion \ref{stepA2-expansion}, and show that $\cl\UnstableMan{z_{+}}$ is an attractor. By Corollary \ref{corollary:stmani-dense-in-R}, map $f|_{\cl\UnstableMan{z_{+}}}$ is mixing, and so also $f|_{\cl\UnstableMan{z_{+}}}$ is transitive. Transitivity implies that the set of points with dense orbits is dense $G_{\delta}$. As $\bigcup_{i\in\Z}f^{i}(\Rc)$ is $F_{\sigma}$, there exists a point $z\notin\bigcup_{i\in\Z}f^{i}(\Rc)$ with a dense orbit. Then a positive lower Lyapunov exponent at $(x,(0,1))$ is implied by \ref{stepA2-hyp} via perturbation. This finishes the proof that $\cl\UnstableMan{z_{+}}$ is a topological strange attractor in $K_{n}\times(0,k_{n})$.
		
		The continuity of $\cl\UnstableMan{z_{+}}$ in the Hausdorff metric follows from Proposition \ref{prop:continuity-hausdorff} and the fact that the trapping region $T$ from Lemma \ref{lemma:trapping-region} can be chosen to vary continuously in the Hausdorff metric. Indeed, for $k:=\min\{i\in\N\colon f(Q)\cap C_{i}=\emptyset \}$ we may let $T=S(\beta_{k+1},\gamma_k,Q)$. The continuity of $T$ follows from the fact that $\beta_{k+1}$ and $\gamma_{k}$ are continuous in the Hausdorff metric.
	\end{proof}
    \subsection*{Acknowledgements}
    This article has been created as part of my doctoral thesis. I am grateful to professor Jan Boroński for supervising my doctoral studies and many fruitful discussions on parameterized families with hyperbolicity. I also would like to thank professor Sonja \v{S}timac and Magda Foryś-Krawiec for helpful remarks and discussions.
\bibliography{lozi_map_bib}

@article{ures_1995, 
    AUTHOR = {Ures, Ra\'ul},
     TITLE = {On the approximation of {H}\'enon-like attractors by
              homoclinic tangencies},
   JOURNAL = {Ergodic Theory Dynam. Systems},
  FJOURNAL = {Ergodic Theory and Dynamical Systems},
    VOLUME = {15},
      YEAR = {1995},
    NUMBER = {6},
     PAGES = {1223--1229},
      ISSN = {0143-3857,1469-4417},
   MRCLASS = {58F12 (58F13)},
  MRNUMBER = {1366318},
MRREVIEWER = {Marcelo\ Viana},
       DOI = {10.1017/S0143385700009895},
       URL = {https://doi.org/10.1017/S0143385700009895},
}

@article{dyi-shing-boronski:disc-lozi-att,
	author = {Ou, Dyi-Shing and Boro\'nski, Jan},
	title = {The discontinuity of the Lozi attractor and its prime end
rotation number},
	year = {in preparation}
}

@article{yutaka-ishii1997:kneading1,
	author = {Ishii, Yutaka},
 title = {Towards a kneading theory for {Lozi} mappings. {I}: {A} solution of the pruning front conjecture and the first tangency problem},
 fjournal = {Nonlinearity},
 journal = {Nonlinearity},
 issn = {0951-7715},
 volume = {10},
 number = {3},
 pages = {731--747},
 year = {1997},
 language = {English},
 doi = {10.1088/0951-7715/10/3/008},
 zbMATH = {1190805},
 Zbl = {0908.58015}
}

@book{katok_Hasselblatt_1995, 
	place={Cambridge}, 
	series={Encyclopedia of Mathematics and its Applications}, 
	title={Introduction to the Modern Theory of Dynamical Systems}, 
	publisher={Cambridge University Press}, 
	author={Katok, Anatole and Hasselblatt, Boris}, 
	year={1995}, 
	collection={Encyclopedia of Mathematics and its Applications}
}

@article{simpson-gosh:robust-chaos,
	 AUTHOR = {Ghosh, I. and Simpson, D. J. W.},
     TITLE = {Robust {D}evaney chaos in the two-dimensional border-collision normal form},
   JOURNAL = {Chaos},
  FJOURNAL = {Chaos. An Interdisciplinary Journal of Nonlinear Science},
    VOLUME = {32},
      YEAR = {2022},
    NUMBER = {4},
     PAGES = {Paper No. 043120, 9},
      ISSN = {1054-1500,1089-7682},
   MRCLASS = {37G05 (37G35)},
  MRNUMBER = {4406782},
       DOI = {10.1063/5.0079807},
       URL = {https://doi.org/10.1063/5.0079807},
}

@article{kucharski:str-att,
	 AUTHOR = {Kucharski, Przemys{\l}aw},
     TITLE = {Strange attractors for the family of orientation preserving
              {L}ozi maps},
   JOURNAL = {Chaos},
  FJOURNAL = {Chaos. An Interdisciplinary Journal of Nonlinear Science},
    VOLUME = {33},
      YEAR = {2023},
    NUMBER = {11},
     PAGES = {Paper No. 113121, 13},
      ISSN = {1054-1500,1089-7682},
   MRCLASS = {37C70 (37D45)},
  MRNUMBER = {4667156},
MRREVIEWER = {Yongluo\ Cao},
       DOI = {10.1063/5.0139893},
}

@Article{mora1993,
	AUTHOR = {Mora, Leonardo and Viana, Marcelo},
     TITLE = {Abundance of strange attractors},
   JOURNAL = {Acta Math.},
  FJOURNAL = {Acta Mathematica},
    VOLUME = {171},
      YEAR = {1993},
    NUMBER = {1},
     PAGES = {1--71},
      ISSN = {0001-5962,1871-2509},
   MRCLASS = {58F13 (58F12)},
  MRNUMBER = {1237897},
MRREVIEWER = {Alexander\ Loskutov},
       DOI = {10.1007/BF02392766},
       URL = {https://doi.org/10.1007/BF02392766},
}

@article{banks-brooks-devaney-chaos-1992,
	AUTHOR = {Banks, J. and Brooks, J. and Cairns, G. and Davis, G. and Stacey, P.},
     TITLE = {On {D}evaney's definition of chaos},
   JOURNAL = {Amer. Math. Monthly},
  FJOURNAL = {American Mathematical Monthly},
    VOLUME = {99},
      YEAR = {1992},
    NUMBER = {4},
     PAGES = {332--334},
      ISSN = {0002-9890,1930-0972},
   MRCLASS = {54H20 (58F13)},
  MRNUMBER = {1157223},
MRREVIEWER = {J.\ E.\ Keesling},
       DOI = {10.2307/2324899},
       URL = {https://doi.org/10.2307/2324899},
}

@InProceedings{viana:global-att,
	author="Viana, Marcelo",
	editor="Broer, H. W.
	and van Gils, S. A.
	and Hoveijn, I.
	and Takens, F.",
	title="Global attractors and bifurcations",
	booktitle="Nonlinear Dynamical Systems and Chaos",
	year="1996",
	publisher="Birkh{\"a}user Basel",
	address="Basel",
	pages="299--324",
	isbn="978-3-0348-7518-9"
}

@article{viana-marcelo-strange-att-1993,
	AUTHOR = {Viana, Marcelo},
	TITLE = {Strange attractors in higher dimensions},
	JOURNAL = {Bol. Soc. Brasil. Mat. (N.S.)},
	FJOURNAL = {Boletim da Sociedade Brasileira de Matem\'{a}tica. Nova
	S\'{e}rie},
	VOLUME = {24},
	YEAR = {1993},
	NUMBER = {1},
	PAGES = {13--62},
	ISSN = {0100-3569},
	MRCLASS = {58F13 (58F12)},
	MRNUMBER = {1224299},
	MRREVIEWER = {C.\ Eugene\ Wayne},
	DOI = {10.1007/BF01231695},
	URL = {https://doi.org/10.1007/BF01231695},
}

@article{cao-mao:non-wandering-set-of-some-Henon-maps,
	AUTHOR = {Cao, Yongluo and Mao, Jian Min},
     TITLE = {The non-wandering set of some {H}\'enon maps},
   JOURNAL = {Chaos Solitons Fractals},
  FJOURNAL = {Chaos, Solitons \& Fractals},
    VOLUME = {11},
      YEAR = {2000},
    NUMBER = {13},
     PAGES = {2045--2053},
      ISSN = {0960-0779,1873-2887},
   MRCLASS = {37C70 (37D45 37E99)},
  MRNUMBER = {1771591},
MRREVIEWER = {Ant\'onio\ Pumari\~no},
       DOI = {10.1016/S0960-0779(99)00098-3},
       URL = {https://doi.org/10.1016/S0960-0779(99)00098-3},
}

@misc{dyi-shing-ou:critical-points-I,
	doi = {10.48550/ARXIV.2203.02326},
	
	url = {https://arxiv.org/abs/2203.02326},
	
	author = {Ou, Dyi-Shing},
	
	title = {{Critical points in higher dimensions, I: Reverse order of periodic orbit creations in the Lozi family}},
	
	publisher = {arXiv},
	
	year = {2022},
	
	copyright = {arXiv.org perpetual, non-exclusive license}
}

@article {simpson:robust-chaos,
	AUTHOR = {Glendinning, Paul A. and Simpson, David J. W.},
	TITLE = {A constructive approach to robust chaos using invariant
	manifolds and expanding cones},
	JOURNAL = {Discrete Contin. Dyn. Syst.},
	FJOURNAL = {Discrete and Continuous Dynamical Systems. Series A},
	VOLUME = {41},
	YEAR = {2021},
	NUMBER = {7},
	PAGES = {3367--3387},
	ISSN = {1078-0947},
	MRCLASS = {37G35 (39A28)},
	MRNUMBER = {4247144},
	MRREVIEWER = {Bernhard Lani-Wayda},
	DOI = {10.3934/dcds.2020409},
	URL = {https://doi.org/10.3934/dcds.2020409},
}

@incollection{barge-cont,
	author = {Barge, Marcy},
 title = {Prime end rotation numbers associated with the {H{\'e}non} maps},
 booktitle = {Continuum theory and dynamical systems. Papers of the conference/workshop on continuum theory and dynamical systems held at Lafayette, LA (USA)},
 isbn = {0-8247-9072-3; 978-1-138-43033-4; 978-1-4822-9346-3},
 pages = {15--33},
 year = {1993},
 publisher = {New York: Marcel Dekker, Inc.},
 language = {English},
 zbMATH = {475100},
 Zbl = {0789.58049}
}

@article{glendinning2019robust,
	AUTHOR = {Glendinning, Paul A. and Simpson, David J. W.},
	TITLE = {Robust chaos and the continuity of attractors},
	JOURNAL = {Trans. Math. Appl.},
	FJOURNAL = {Transactions of Mathematics and its Applications. A Journal of
	the IMA},
	VOLUME = {4},
	YEAR = {2020},
	NUMBER = {1},
	PAGES = {tnaa002, 15},
	MRCLASS = {37C70 (37D45 37E05 37G10)},
	MRNUMBER = {4164067},
	MRREVIEWER = {Yongluo Cao},
	DOI = {10.1093/imatrm/tnaa002}
}

@article{henon1976,
	author = {H{\'e}non, M.},
 title = {A two-dimensional mapping with a strange attractor},
 fjournal = {Communications in Mathematical Physics},
 journal = {Commun. Math. Phys.},
 issn = {0010-3616},
 volume = {50},
 pages = {69--76},
 year = {1976},
 language = {English},
 doi = {10.1007/BF01608556},
 zbMATH = {3921578},
 Zbl = {0576.58018}
}

@article{strange-attractor-mis,
	author = {Misiurewicz, Micha{\l}},
	title = {{Strange attractors for the Lozi mappings}},
	journal = {Annals of the New York Academy of Sciences},
	volume = {357},
	number = {1},
	pages = {348-358},
	year = {1980},
	doi = {https://doi.org/10.1111/j.1749-6632.1980.tb29702.x}
}

@article{Young1985BowenRuelleMF,
	author = {Young, Lai-Sang},
 title = {Bowen-{Ruelle} measures for certain piecewise hyperbolic maps},
 fjournal = {Transactions of the American Mathematical Society},
 journal = {Trans. Am. Math. Soc.},
 issn = {0002-9947},
 volume = {287},
 pages = {41--48},
 year = {1985},
 language = {English},
 doi = {10.2307/2000396},
 zbMATH = {3879770},
 Zbl = {0552.58022}
}

@article{henon-dynamics-of,
	 AUTHOR = {Benedicks, Michael and Carleson, Lennart},
     TITLE = {The dynamics of the {H}\'enon map},
   JOURNAL = {Ann. of Math. (2)},
  FJOURNAL = {Annals of Mathematics. Second Series},
    VOLUME = {133},
      YEAR = {1991},
    NUMBER = {1},
     PAGES = {73--169},
      ISSN = {0003-486X,1939-8980},
   MRCLASS = {58F12 (58F13)},
  MRNUMBER = {1087346},
MRREVIEWER = {Feliks\ Przytycki},
       DOI = {10.2307/2944326},
       URL = {https://doi.org/10.2307/2944326},
}

@article{StrangeAttractorswithOneDirectionofInstability,
	AUTHOR = {Wang, Qiudong and Young, Lai-Sang},
     TITLE = {Strange attractors with one direction of instability},
   JOURNAL = {Comm. Math. Phys.},
  FJOURNAL = {Communications in Mathematical Physics},
    VOLUME = {218},
      YEAR = {2001},
    NUMBER = {1},
     PAGES = {1--97},
      ISSN = {0010-3616,1432-0916},
   MRCLASS = {37D45 (37E99)},
  MRNUMBER = {1824198},
MRREVIEWER = {Boris\ Hasselblatt},
       DOI = {10.1007/s002200100379},
       URL = {https://doi.org/10.1007/s002200100379},
}

@book{palistakens:hyp,
	author = {J. Palis, F. Taken},
	title = {Hyperbolicity \& Sensitive Chaotic Dynamics at Homoclinic Bifurcations},
	journal = {SIAM Review},
	volume = {38},
	number = {2},
	pages = {348-349},
	year = {1996},
	doi = {10.1137/1038064},
	eprint = { https://doi.org/10.1137/1038064}
}

@article{Lozi-likemaps,
    AUTHOR = {{Micha{\l} Misiurewicz  and Sonja \v{S}timac}},
	TITLE = {Lozi-like maps},
	JOURNAL = {Discrete Contin. Dyn. Syst.},
	FJOURNAL = {Discrete and Continuous Dynamical Systems. Series A},
	VOLUME = {38},
	YEAR = {2018},
	NUMBER = {6},
	PAGES = {2965--2985},
	ISSN = {1078-0947},
	MRCLASS = {37E30 (37B10 37D45)},
	MRNUMBER = {3809069},
	MRREVIEWER = {Yutaka Ishii},
	DOI = {10.3934/dcds.2018127},
	URL = {https://doi.org/10.3934/dcds.2018127},
}

@article{cao-liu-98,
	AUTHOR = {Cao, Yongluo and Liu, Zengrong},
     TITLE = {Strange attractors in the orientation-preserving {L}ozi map},
   JOURNAL = {Chaos Solitons Fractals},
  FJOURNAL = {Chaos, Solitons \& Fractals},
    VOLUME = {9},
      YEAR = {1998},
    NUMBER = {11},
     PAGES = {1857--1863},
      ISSN = {0960-0779,1873-2887},
   MRCLASS = {58F13},
  MRNUMBER = {1670359},
       DOI = {10.1016/S0960-0779(97)00180-X},
       URL = {https://doi.org/10.1016/S0960-0779(97)00180-X},
}
\bibliographystyle{plain}
    \end{document}